\documentclass[11pt]{article}
\normalsize
\usepackage{amsmath,amssymb,amsthm,mathtools}
\usepackage{enumitem}
\usepackage{stmaryrd}

\usepackage{hyperref}

{\theoremstyle{plain}
\newtheorem{theorem}{Theorem}[section]
\newtheorem{lemma}[theorem]{Lemma}
\newtheorem{corollary}[theorem]{Corollary}
\newtheorem{proposition}[theorem]{Proposition}}

{\theoremstyle{definition}
\newtheorem{remark}[theorem]{Remark}
\newtheorem{definition}[theorem]{Definition}
\newtheorem{example}[theorem]{Example}
\newtheorem{question}[theorem]{Question}}

\begin{document}
\title{Projector additive group codes}
\author{Javier de la Cruz\\Universidad del Norte, Barranquilla, Colombia}
\date{}
\maketitle
\noindent
{\bf Keywords.}  $FG$-projectors, Additive group code, LCD additive group code, self-dual additive group code\\
{\bf MSC classification.} 94B05, 94B15, 20C05

\begin{abstract}
Let $F=\mathbb{F}_q$ and let $K=\mathbb{F}_{q^m}$ be a finite extension of
$F$. An additive left group code is a left $FG$-submodule of the group
algebra $KG$. Classical idempotent group codes describe images of
$KG$-linear projectors and are necessarily $K$-linear. They therefore do not
provide a sufficiently broad framework for additive group codes, which need
not be $K$-linear. In this paper, we develop a projector-based approach for
the study of additive left group codes.
We distinguish between restricted idempotent additive group codes of the form
$FGe$, where $e\in KG$ is an idempotent, and projector additive left group
codes, which are images of arbitrary $FG$-linear projectors on $KG$. While
$KG$-linear projector group codes coincide with classical idempotent group
codes, their additive counterparts do not, in general, coincide. We show
that projector additive group codes provide constructions beyond the
restricted idempotent setting, and we give examples in both the semisimple
and non-semisimple cases with parameters not attained by restricted
idempotent additive group codes.

The projector framework also provides effective algebraic tools for studying
duality. We relate trace-Euclidean and trace-Hermitian duality to adjoints of
$FG$-linear projectors, characterize LCD additive group codes through
self-adjoint projectors, and obtain sufficient conditions for self-duality.
We further study Murray--von Neumann equivalence of projectors and show that
it characterizes isomorphism of their images as left $FG$-modules, thereby
providing a module-theoretic classification tool for projector additive group
codes. Finally, we interpret quotients by orthogonal codes in terms of module
duals.
\end{abstract}

\section{Introduction}

Let $F=\mathbb{F}_q$ and let $K=\mathbb{F}_{q^m}$ be a finite extension of
degree $m$. An additive code of length $n$ over $K$ is an $F$-subspace of
$K^n$. In particular, every $K$-linear code is additive, whereas the converse
does not hold in general. Thus, additive codes provide a broader class than
linear codes over $K$. Additive codes over $\mathbb{F}_4$ were introduced in
\cite{Calderbank} because of their connection with quantum error-correcting
codes. More precisely, the construction of quantum stabilizer codes can be
reduced to the study of additive codes over $\mathbb{F}_4$ that are
self-orthogonal with respect to a suitable trace inner product. Additive codes
over $\mathbb{F}_9$ that are self-dual with respect to the trace-Hermitian
inner product were later classified in \cite{Danielsen}, where they were
related to ternary quantum error-correcting codes. More general classes of
additive codes have since been studied in
\cite{Bierbrauer-7, Huffman-2010, Huffman-2013}.

Let $G$ be a finite group of order $n$. Via the standard identification of
$KG$ with $K^n$, the group algebra $KG$ provides a natural algebraic setting
for codes of length $n$. In this context, an additive group code is a left
$FG$-submodule of $KG$. Likewise, a linear group code over $K$ is a left
$KG$-submodule of $KG$, equivalently, a left ideal of $KG$. Hence every
linear group code is an additive group code, but the converse need not hold.
Since $KG$ may be noncommutative, left and right versions must in general be
distinguished. Throughout this paper, unless explicitly stated otherwise, all
additive group codes are understood to be left additive group codes.
Additive group codes over extensions of finite chain rings, together with
their decomposition theory and trace-Euclidean duality, have recently been
investigated in \cite{BCFM-2025}.

Classical idempotent group codes are the left ideals of the form $C=KGe$,
where $e^2=e\in KG$ is an idempotent. Equivalently, they are precisely the images
of $KG$-linear projectors on $KG$. Thus, in the classical $K$-linear setting,
the idempotent and projector constructions coincide. 
The situation is different for additive group codes, as will become clear
below. First, the direct additive
analogue of the classical idempotent construction is the restricted
idempotent additive group code $FGe$, where $e\in KG$ is an idempotent. For this reason, and whenever no confusion is likely to arise, we
shall occasionally refer to a code of the form $FGe$ as an idempotent
additive group code. On the other hand, the additive
analogue of the classical construction of codes as images of $KG$-linear
projectors is provided by projector additive left group codes. More
precisely, a projector additive left group code is a code of the form
$\operatorname{Im}(P)$, where $P\in\operatorname{End}_{FG}(KG)$ is an
$FG$-linear projector.

Thus, the restricted idempotent construction is obtained by restricting to
$FG$ a projector induced by an idempotent of $KG$, whereas the projector
construction allows arbitrary $FG$-linear projectors on $KG$. In this way,
projector additive left group codes extend to the additive setting the
classical construction of codes as images of projectors.

Unlike the classical $K$-linear setting, the restricted idempotent and
projector constructions do not, in general, coincide in the additive
framework. If $FG$ is semisimple, then every additive left group code is a
projector additive left group code, whereas restricted idempotent additive
group codes form a subclass that can be proper. More generally, in both the
semisimple and non-semisimple settings, projector additive group codes need
not be of the form $FGe$ for any idempotent $e\in KG$. Examples of both
phenomena are given in Examples~\ref{ex:semisimple-projector} and
\ref{ex:nonsemisimple-projector}. Thus, the projector approach provides a
genuinely different and more flexible construction than the restricted
idempotent construction.

This distinction is also relevant from the coding-theoretic point of view.
In both the semisimple and non-semisimple settings, projector additive group
codes can yield genuinely additive codes with parameters not attained by the
restricted idempotent construction. In particular,
Example~\ref{ex:semisimple-projector} gives a semisimple example of a
projector additive group code with strictly larger minimum distance than every
restricted idempotent additive group code of the same length and cardinality.
Likewise, Example~\ref{ex:nonsemisimple-projector} shows that the same
phenomenon can occur in the non-semisimple case. Thus, the projector
construction is not only structurally more flexible, but may also lead to
improved coding-theoretic parameters.

A further motivation for studying projector additive group codes comes from
their duality properties. We develop a projector-adjoint approach to
trace-Euclidean duality and, when $m$ is even, to trace-Hermitian duality.
This approach relates the dual of a projector additive group code to the
adjoint of a defining $FG$-linear projector. In particular, we characterize
trace-LCD additive left group codes by the existence of a self-adjoint
$FG$-linear projector having the code as its image; see
Theorem~\ref{thm:LCD-projector-characterization}. As an application of this
result, we recover the classical characterization of Euclidean LCD left group
codes as left ideals of the form $KGe$, where $e\in KG$ is a self-adjoint
idempotent. When $m$ is even, we similarly recover the corresponding
characterization for Hermitian LCD left group codes. These classical results
are known from \cite{CruzWillems2017,BBW24}.

We also show that, if a defining projector $P$ satisfies $P^*=I-P$ with
respect to the corresponding trace form, then its image is trace self-dual;
see Proposition~\ref{prop:projector-selfdual}. Thus, LCD and self-dual
additive group codes can be investigated by means of algebraic properties of
their defining projectors. To the best of our knowledge, this
projector-adjoint approach to trace duality for additive group codes has not
previously been developed in the literature.

We also investigate the Murray--von Neumann equivalence of $FG$-linear
projectors. This equivalence has a natural module-theoretic interpretation:
if $C=\operatorname{Im}(P)$ and $D=\operatorname{Im}(Q)$ are projector
additive group codes, then $C$ and $D$ are isomorphic as left $FG$-modules if
and only if $P$ and $Q$ are Murray--von Neumann equivalent. Accordingly,
this notion provides a tool for organizing and classifying projector
additive group codes according to their $FG$-module structure. We emphasize
that this is not, in general, a metric equivalence: it need not preserve
Hamming weight, minimum distance, or weight distribution. Finally, we
interpret quotients by orthogonal codes in terms of module duals, thereby
connecting trace duality with the representation-theoretic structure of
additive group codes.

The main objective of this paper is therefore to establish a projector-based
theory for additive left group codes. We study the module-theoretic structure
of projector and restricted projector additive group codes, their
trace-Euclidean and trace-Hermitian duality properties, their LCD and
self-dual instances, and their classification up to Murray--von Neumann
equivalence. Several of our results may be viewed as additive counterparts or
extensions of results obtained in
\cite{BCFM-2025, BBW24, CruzWillems2017, CruzWillems2025,
DeLaCruzWillems2021, Willems2002} for LCD and self-dual group codes, as well
as for twisted and skew-twisted group codes.

The paper is organized as follows. Section~2 contains the necessary
preliminaries on $\sigma$-sesquilinear forms, additive codes in $K^n$, and
additive group codes in $KG$. Section~3 develops the projector approach to
additive group codes, including $FG$-projectors and their adjoints,
restricted projector codes, LCD and self-dual projector additive group codes,
and Murray--von Neumann equivalence. Section~4 is devoted to module duals and
quotients by orthogonal codes.

\section{Preliminaries}
Throughout the paper, the following notation is used:
\begin{itemize}
    \item $G$ is a finite group, $F=\mathbb{F}_q$, and $K=\mathbb{F}_{q^m}$ is a finite extension of degree $m$.
    \item $KG$ and $FG$ are the group algebras of $G$ over $K$ and $F$, respectively.
    \item If $a=\sum_{g\in G} a_g g\in KG$, then $a^\ast=\sum_{g\in G} a_g g^{-1}$ denotes the involution of $a$ induced by inversion in $G$.
    \item If $a=\sum_{g\in G} a_g g\in KG$, then $\operatorname{coef}_{1_G}(a)=a_{1_G}$ denotes the coefficient of $1_G$ in $a$.
    \item $C^{\perp_\star}$ is the orthogonal of $C \leq KG$ with respect to $\langle\cdot,\cdot\rangle_\star$.
    \item $P^{\ast_\star}$ is the adjoint of $P\in\operatorname{End}_{FG}(KG)$ with respect to $\langle\cdot,\cdot\rangle_\star$.
    \item $e \sim_{\mathrm{MvN}} f$ denotes Murray--von Neumann equivalence of idempotents $e,f \in A$.
    \item $C^*=\operatorname{Hom}_F(C,F)$ is the module dual of an additive left group code $C$. In particular, $C^* \neq \{c^*:c\in C\}$.
    \end{itemize}
\subsection{Basic facts on $\sigma$-sesquilinear forms}

Let $F=\mathbb{F}_q$ and let $K=\mathbb{F}_{q^m}$ be a finite extension of degree $m$, let $\sigma\in\operatorname{Aut}(F)$, and let $V$ be a finite-dimensional $K$-vector space. A \textit{$\sigma$-sesquilinear form on $V$ over $F$} is a map $\langle\cdot,\cdot\rangle:V\times V\longrightarrow F$ such that, for all $x,y,z\in V$ and all $\alpha\in F$, one has
$\langle x+y,z\rangle=\langle x,z\rangle+\langle y,z\rangle$,
$\langle x,y+z\rangle=\langle x,y\rangle+\langle x,z\rangle$,
$\langle \alpha x,y\rangle=\alpha\langle x,y\rangle$,
and
$\langle x,\alpha y\rangle=\sigma(\alpha)\langle x,y\rangle$.

When $\sigma=\operatorname{id}_F$, the form is called \emph{bilinear}. If, in addition, it is \emph{symmetric}, that is, if $\langle x,y\rangle=\langle y,x\rangle$ for all $x,y\in V$, then the form is called \emph{Euclidean}.

If $\operatorname{ord}(\sigma)=2$ and $\sigma(\langle x,y\rangle)=\langle y,x\rangle$ for all $x,y\in V$, then the form is called \textit{Hermitian}.

A $\sigma$-sesquilinear form is called \textit{reflexive} if $\langle x,y\rangle=0$ if and only if $\langle y,x\rangle=0$ for all $x,y\in V$. In particular, Hermitian and Euclidean forms are reflexive.

Finally, a $\sigma$-sesquilinear form is called \textit{non-degenerate} if $\langle x,y\rangle=0$ for all $y\in V$ implies $x=0$.

For any $F$-subspace $W\leq V$, the \textit{dual space} of $W$ with respect to a reflexive $\sigma$-sesquilinear form $\star=\langle\cdot,\cdot\rangle$, denoted by $W^{\perp_{\star}}$, is defined by
$$
W^{\perp_{\star}}:=\{v\in V:\langle v,w\rangle=0 \text{ for all } w\in W\}.
$$
 It is well known that if $\star=\langle\cdot,\cdot\rangle$ is non-degenerate and reflexive, then
$
\dim_F(W^{\perp_{\star}})=\dim_F(V)-\dim_F(W)
$
and
$
(W^{\perp_{\star}})^{\perp_{\star}}=W.
$

The radical of a reflexive $\sigma$-sesquilinear form $\langle\cdot,\cdot\rangle$ on $V$ is the $F$-vector space
$$
\operatorname{Rad}(\langle\cdot,\cdot\rangle)=\{x\in V:\langle x,y\rangle=0 \text{ for all } y\in V\}.
$$
Clearly, a reflexive $\sigma$-sesquilinear form is non-degenerate if and only if its radical is trivial.

If $\mathcal{B}=\{\beta_1,\ldots,\beta_r\}$ is a basis of $V$ over $F$ and if $\langle\cdot,\cdot\rangle$ is a $\sigma$-sesquilinear form on $V$ over $F$, then for
$$
x=\sum_{i=1}^r x_i\beta_i,\qquad y=\sum_{j=1}^r y_j\beta_j,
$$
we have
$$
\langle x,y\rangle
=
\sum_{i=1}^r x_i\sum_{j=1}^r \langle \beta_i,\beta_j\rangle \sigma(y_j).
$$
If $[x]_{\mathcal B}=(x_1,\ldots,x_r)$ and $[y]_{\mathcal B}=(y_1,\ldots,y_r)$ denote the coordinate row vectors of $x$ and $y$ with respect to $\mathcal B$, then
$$
\langle x,y\rangle=[x]_{\mathcal B}\,G_{\mathcal B}\,\sigma([y]_{\mathcal B})^t,
$$
where
$$
G_{\mathcal B}=(\langle \beta_i,\beta_j\rangle)\in M_r(F).
$$
The matrix $G_{\mathcal B}$ is called the Gram matrix of $\langle\cdot,\cdot\rangle$ with respect to the basis $\mathcal B$.

\subsection{Additive codes in $K^n$}

Let $F=\mathbb{F}_q$ and let $K=\mathbb{F}_{q^m}$ be a finite extension of degree $m$. Then $K^n$ is naturally an $F$-vector space of dimension $mn$. A \textit{linear code} over $K$ is a $K$-subspace of $K^n$. An \textit{$F$-additive code} over $K$ is an $F$-subspace of $K^n$. Therefore, every linear code over $K$ is an $F$-additive code, whereas an $F$-additive code need not be $K$-linear. Observe that any additive subgroup of $K^n$ is automatically linear over the prime subfield of $K$. In this work, however, we restrict our attention to codes that are linear over the fixed subfield $F=\mathbb{F}_q$. For brevity, when no confusion is likely to arise, an $F$-additive code will simply be called an \textit{additive code}.

In particular, every $F$-additive code is an additive subgroup of $(K^n,+)$. Unless otherwise stated, the \textit{dimension} of an $F$-additive code $C\leq K^n$ will always mean $\dim_F(C)$. Hence, if $\dim_F(C)=r$, then
$
|C|=q^r.
$
Equivalently, if $|C|=(q^m)^k$, then
$
k=\frac{\dim_F(C)}{m}
$
is called the dimension of $C$ over $K$. In general, $k$ need not be an integer.

For a vector $x=(x_1,\dots,x_n)\in K^n$, the \textit{Hamming weight} of $x$ is defined by
$$
\operatorname{wt}(x)=|\{i\in\{1,\dots,n\}:x_i\neq 0\}|,
$$
and the \textit{Hamming distance} between $x,y\in K^n$ is defined by
$
d(x,y)=\operatorname{wt}(x-y).
$
The \textit{minimum distance} of a nonzero $F$-additive code $C\leq K^n$ is defined by
$$
d(C)=\min\{\operatorname{wt}(x):x\in C,\ x\neq 0\}.
$$

If $C$ has cardinality $q^r$ and minimum distance $d$, we say that $C$ has parameters $(n,q^r,d)$.

 The Singleton bound for an $F$-additive code $C\leq K^n$ with parameters
$(n,q^r,d)$ is $|C|\leq |K|^{n-d+1}$. Since $|C|=q^r$ and $|K|=q^m$, this
bound can be written as $r\leq m(n-d+1)$. Equivalently, one has
$d\leq n-\left\lceil r/m\right\rceil+1$.

We say that $C$ is \emph{Singleton-optimal} if it attains the integral
Singleton bound, namely, if $d=n-\left\lceil r/m\right\rceil+1$. If
$m\mid r$, then $C$ is MDS if and only if it is Singleton-optimal. Indeed,
writing $r=mk$, the Singleton-optimality condition becomes the usual MDS
condition $d=n-k+1$.

If $m\nmid r$, then equality in $r\leq m(n-d+1)$ is impossible, and hence
$C$ cannot be MDS in the strict sense. In this case, we call $C$
\emph{quasi-MDS}, or \emph{QMDS}, if it is Singleton-optimal. Equivalently,
when $m\nmid r$, the code $C$ is QMDS if
$d=n-\left\lceil r/m\right\rceil+1$.

\subsection{Additive group codes}
Let $G$ be a finite group. We write $KG$ for the group algebra of $G$ over $K$, that is, the $K$-vector space with basis $G$ whose multiplication is given by
$$
\left(\sum_{g\in G} a_g g\right)\left(\sum_{h\in G} b_h h\right)=\sum_{g,h\in G} a_g b_h\,gh,
$$
for all $a_g,b_h\in K$. Similarly, $FG$ denotes the group algebra of $G$ over $F$.

Let $\ast:G\longrightarrow G$ be the involution defined by $g^\ast=g^{-1}$ for all $g\in G$. Extending $\ast$ linearly, we obtain a $K$-linear involutive antiautomorphism of $KG$, still denoted by $\ast$, given by
$
\left(\sum_{g\in G} a_g g\right)^\ast=\sum_{g\in G} a_g g^{-1}.
$
In particular, for all $x,y\in KG$, one has
$
(x^\ast)^\ast=x
$
and
$
(xy)^\ast=y^\ast x^\ast.
$

Let $n=|G|$, and let $E=\{e_1,\dots,e_n\}$ be the canonical basis of $K^n$.

\begin{definition}
Let $C\leq K^n$ be an $F$-additive code. We say that $C$ is an \emph{additive left $G$-code} (respectively, \emph{additive right $G$-code}; \emph{additive $G$-code}) if there exists a bijection
$$
\varphi:E\longrightarrow G
$$
such that the $K$-linear extension $\varphi:K^n\longrightarrow KG$ maps $C$ onto a left (respectively, right; two-sided) $FG$-submodule of $KG$.
\end{definition}

This leads to the following intrinsic notion in the group algebra setting.

\begin{definition}
A subset $C\subseteq KG$ is called an \emph{additive left group code}
(respectively, an \emph{additive right group code}; an \emph{additive
group code}) if $C$ is a left $FG$-submodule (respectively, a right
$FG$-submodule; an $FG$-subbimodule) of $KG$.
\end{definition}

If $C$ is an additive left group code, we write $C\leq KG$ to indicate that
$C$ is a left $FG$-submodule of $KG$. Unless explicitly stated otherwise,
all additive group codes considered in this paper are additive left group
codes.

Note that an additive left group code $C \leq KG$ is, in particular, an $F$-vector subspace of $KG$, although not necessarily a $K$-subspace. Thus, additive left group codes form a distinguished class of additive codes, with the additional advantage that they can be studied using algebraic tools such as module theory and the representation theory of finite groups.

To define the Hamming weight, minimum distance, code parameters, and the relevant bilinear forms on $KG$, fix an ordering
$
G=\{g_1,\dots,g_n\}
$
and let
$
\psi:K^n\to KG
$
be the $K$-linear isomorphism determined by
$
\psi(e_i)=g_i
$
for $1\leq i\leq n$. Thus, if
$
x=\sum_{i=1}^n x_i e_i\in K^n,
$
then
$
\psi(x)=\sum_{i=1}^n x_i g_i\in KG.
$

Via this identification, additive codes in $KG$ inherit the usual notions of Hamming weight, minimum distance, and code parameters from additive codes in $K^n$. In the same way, the standard Euclidean, trace Euclidean, Hermitian, and trace Hermitian forms on $K^n$ give rise to the corresponding forms on $KG$, which we denote by the same symbols. More precisely, for $x=\sum_{g\in G} a_g g$ and $y=\sum_{g\in G} b_g g$ in $KG$, we define
$$
\langle x,y\rangle_{\mathrm{E}}=\sum_{g\in G} a_g b_g
\;\;\; \textrm{and} \;\;\;
\langle x,y\rangle_{\mathrm{TE}}=\operatorname{Tr}_{K/F}\!\left(\sum_{g\in G} a_g b_g\right),
$$
where the trace map from $K$ to $F$ is given by
$$
\operatorname{Tr}_{K/F}(x)=x+x^q+\cdots+x^{q^{m-1}}
$$
for all $x\in K$. This map is $F$-linear and surjective.

When $m$ is even, we also define
$$
\langle x,y\rangle_{\mathrm{H}}=\sum_{g\in G} a_g \overline{b_g} \;\;\; \textrm{and} \;\;\;
\langle x,y\rangle_{\mathrm{TH}}=\operatorname{Tr}_{K/F}\!\left(\sum_{g\in G} a_g \overline{b_g}\right),
$$
where, for
$
a=\sum_{g\in G} a_g g\in KG,
$
we set
$
\overline{a}:=\sum_{g\in G} \overline{a_g}\,g,
$
and $\overline{a_g}=a_g^{q^{m/2}}$ denotes the involution of $K$.

These are called the \emph{Euclidean}, \emph{trace Euclidean}, \emph{Hermitian}, and \emph{trace Hermitian} forms on $KG$, respectively.

\begin{remark}
The Euclidean form is a non-degenerate symmetric bilinear form on $KG$ over $K$, and the trace Euclidean form is a non-degenerate symmetric bilinear form on $KG$ over $F$. If $m$ is even, then the Hermitian form is a non-degenerate  Hermitian sesquilinear  form on $KG$ over $K$, whereas the trace Hermitian form is a non-degenerate symmetric bilinear form on $KG$ over $F$, since the involution on $K$ fixes $F$ pointwise.
\end{remark}

\begin{lemma}\label{identidad}
Let $a,x,y\in KG$. Then the following hold:
\begin{enumerate}
    \item The Euclidean form satisfies
    $
    \langle ax,y\rangle_{\mathrm{E}}=\langle x,a^\ast y\rangle_{\mathrm{E}}$ and $
    \langle xa,y\rangle_{\mathrm{E}}=\langle x,ya^\ast\rangle_{\mathrm{E}}.
    $
    Hence, the trace Euclidean form satisfies
    $
    \langle ax,y\rangle_{\mathrm{TE}}=\langle x,a^\ast y\rangle_{\mathrm{TE}}$ and $\langle xa,y\rangle_{\mathrm{TE}}=\langle x,ya^\ast\rangle_{\mathrm{TE}}.
    $

    \item If $m$ is even, then the Hermitian form satisfies
    $
    \langle ax,y\rangle_{\mathrm{H}}=\langle x,\overline{a}^\ast y\rangle_{\mathrm{H}}$ and $
    \langle xa,y\rangle_{\mathrm{H}}=\langle x,y\,\overline{a}^\ast\rangle_{\mathrm{H}}.
    $
    Hence, the trace Hermitian form satisfies
    $
    \langle ax,y\rangle_{\mathrm{TH}}=\langle x,\overline{a}^\ast y\rangle_{\mathrm{TH}}$ and $
    \langle xa,y\rangle_{\mathrm{TH}}=\langle x,y\,\overline{a}^\ast\rangle_{\mathrm{TH}}.
    $
\end{enumerate}
\end{lemma}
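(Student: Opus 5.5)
The plan is to rewrite the Euclidean form through the coefficient of the identity and the involution, and then read off every identity in Lemma~\ref{identidad} from associativity and the antiautomorphism property of $\ast$.

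\textbf{Key identity.} For $x=\sum_g a_g g$ and $y=\sum_g b_g g$ in $KG$,
$$
\langle x,y\rangle_{\mathrm{E}}=\operatorname{coef}_{1_G}(xy^\ast)=\operatorname{coef}_{1_G}(y^\ast x).
$$
To see this, note that $xy^\ast=\sum_{g,h}a_gb_h\,gh^{-1}$. A term contributes to the coefficient of $1_G$ exactly when $g=h$, which gives $\sum_g a_gb_g$. The same argument applies to $y^\ast x$. This gives a trace-like property: $\operatorname{coef}_{1_G}(uv)=\operatorname{coef}_{1_G}(vu)$ for all $u,v\in KG$.

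\textbf{Euclidean identities.} For the left identity,
$$
\langle ax,y\rangle_{\mathrm{E}}=\operatorname{coef}_{1_G}(y^\ast a x)=\operatorname{coef}_{1_G}\bigl((a^\ast y)^\ast x\bigr)=\langle x,a^\ast y\rangle_{\mathrm{E}},
$$
using $(a^\ast y)^\ast=y^\ast a$. For the right identity,
$$
\langle xa,y\rangle_{\mathrm{E}}=\operatorname{coef}_{1_G}(xay^\ast)=\operatorname{coef}_{1_G}\bigl(x(ya^\ast)^\ast\bigr)=\langle x,ya^\ast\rangle_{\mathrm{E}}.
$$
Applying the $F$-linear map $\operatorname{Tr}_{K/F}$ to both sides gives the trace Euclidean statements.

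\textbf{Hermitian identities.} Since $\langle x,y\rangle_{\mathrm{H}}=\langle x,\overline{y}\rangle_{\mathrm{E}}$, the Hermitian case reduces to the Euclidean one. The only additional facts needed are these.
\begin{enumerate}
\item The bar map $a\mapsto\overline{a}$ is a ring automorphism of $KG$. This holds because it is induced by a field automorphism acting coefficientwise, and it fixes $G$.
\item The bar map commutes with $\ast$.
\item The bar map is involutive.
\end{enumerate}
Then
$$
\langle ax,y\rangle_{\mathrm{H}}=\langle ax,\overline{y}\rangle_{\mathrm{E}}=\langle x,a^\ast\overline{y}\rangle_{\mathrm{E}}=\langle x,\overline{\overline{a}^\ast y}\rangle_{\mathrm{E}}=\langle x,\overline{a}^\ast y\rangle_{\mathrm{H}},
$$
where the third equality uses $\overline{\overline{a}^\ast y}=\overline{\overline{a}^\ast}\,\overline{y}=a^\ast\overline{y}$. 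The right-multiplication identity follows the same way, and applying the trace again gives the trace Hermitian versions.

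\textbf{Main obstacle.} The argument is routine. The only point needing care is the bookkeeping of bar and $\ast$ in the Hermitian case. Specifically, one must check that $\overline{\,\cdot\,}$ is multiplicative on $KG$; this holds because $\overline{\alpha\beta}=\overline{\alpha}\,\overline{\beta}$ in $K$ and $G$ is fixed.
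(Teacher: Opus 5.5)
Your argument is correct, but it takes a different route from the paper's. The paper proves each identity by direct coordinate computation. It expands $\langle ax,y\rangle_{\mathrm E}=\sum_{g,h}a_gx_h\,y_{gh}$, observes that $\sum_g a_g y_{gh}$ is the coefficient of $h$ in $a^\ast y$, and then repeats this index bookkeeping, with conjugates inserted, for the Hermitian form. It dismisses the right-multiplication identities as analogous.

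You instead first establish $\langle x,y\rangle_{\mathrm E}=\operatorname{coef}_{1_G}(xy^\ast)=\operatorname{coef}_{1_G}(y^\ast x)$. This is the content of the proposition the paper states right after this lemma, which it proves independently, so there is no circularity. From this, both the left and right identities follow formally from associativity and $(uv)^\ast=v^\ast u^\ast$. You then reduce the Hermitian case to the Euclidean one via $\langle x,y\rangle_{\mathrm H}=\langle x,\overline{y}\rangle_{\mathrm E}$, using that the bar map is an involutive ring automorphism of $KG$ commuting with $\ast$.

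Your route shows the structural reason for the lemma: a trace-like functional combined with an anti-automorphism. It also actually proves the right-multiplication identities instead of leaving them as analogous, and it avoids a second coordinate computation. The paper's route is more elementary and needs no auxiliary facts about the bar map, at the cost of repeating the index manipulation in each case.
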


\begin{proof}
Write
$
a=\sum_{g\in G} a_g g,$ $
x=\sum_{h\in G} x_h h,$ and
$y=\sum_{k\in G} y_k k.$
Then we have
$$
\langle ax,y\rangle_{\mathrm{E}}
=
\sum_{g,h\in G} a_gx_h\,y_{gh}
=
\sum_{h\in G} x_h (\sum_{g\in G} a_g\,y_{gh}).
$$
Now $a^\ast y=\sum_{g,k\in G} a_g y_k\,g^{-1}k,
$
so the coefficient of $h$ in $a^\ast y$ is $\sum_{g\in G} a_g\,y_{gh}$. Hence
$$
\langle x,a^\ast y\rangle_{\mathrm{E}}
=
\sum_{h\in G} x_h (\sum_{g\in G} a_g\,y_{gh})
=
\langle ax,y\rangle_{\mathrm{E}}.
$$
Thus $\langle ax,y\rangle_{\mathrm{E}}=\langle x,a^\ast y\rangle_{\mathrm{E}}$. The identity
$
\langle xa,y\rangle_{\mathrm{E}}=\langle x,ya^\ast\rangle_{\mathrm{E}}
$
is analogous. Applying $\operatorname{Tr}_{K/F}$ gives the corresponding identities for $\langle\cdot,\cdot\rangle_{\mathrm{TE}}$.

Assume now that $m$ is even. Then
$$
\langle ax,y\rangle_{\mathrm{H}}
=
\sum_{g,h\in G} a_gx_h\,\overline{y_{gh}}
=
\sum_{h\in G} x_h \sum_{g\in G} a_g\,\overline{y_{gh}}.
$$
Also,
$
\overline{a}^\ast=\sum_{g\in G} \overline{a_g}\,g^{-1} $ and $
\overline{a}^\ast y=\sum_{g,k\in G} \overline{a_g}\,y_k\,g^{-1}k,
$
so the coefficient of $h$ in $\overline{a}^\ast y$ is $\sum_{g\in G} \overline{a_g}\,y_{gh}$. Therefore
$$
\langle x,\overline{a}^\ast y\rangle_{\mathrm{H}}
=
\sum_{h\in G} x_h\,
\overline{\sum_{g\in G} \overline{a_g}\,y_{gh}}
=
\sum_{h\in G} x_h \sum_{g\in G} a_g\,\overline{y_{gh}}
=
\langle ax,y\rangle_{\mathrm{H}}.
$$
Thus $\langle ax,y\rangle_{\mathrm{H}}=\langle x,\overline{a}^\ast y\rangle_{\mathrm{H}}$. The identity
$
\langle xa,y\rangle_{\mathrm{H}}=\langle x,y\,\overline{a}^\ast\rangle_{\mathrm{H}}
$
is analogous. Applying $\operatorname{Tr}_{K/F}$ gives the corresponding identities for $\langle\cdot,\cdot\rangle_{\mathrm{TH}}$.
\end{proof}

For $a=\sum_{g\in G} a_g g\in KG$, we denote by $\operatorname{coef}_{1_G}(a)$ the coefficient of $1_G$ in $a$.

\begin{proposition}
Let $x,y\in KG$. Then the following hold:
\begin{enumerate}
    \item The Euclidean and trace Euclidean forms are given by
    $
    \langle x,y\rangle_{\mathrm{E}}
    =
    \operatorname{coef}_{1_G}(xy^\ast)$ and
$ \langle x,y\rangle_{\mathrm{TE}}
    =   \operatorname{Tr}_{K/F}\!\bigl(\operatorname{coef}_{1_G}(xy^\ast)\bigr).
    $
    \item If $m$ is even, then the Hermitian and trace Hermitian forms are given by
    $    \langle x,y\rangle_{\mathrm{H}}
    =
    \operatorname{coef}_{1_G}(x\overline{y}^\ast)$ and $
    \langle x,y\rangle_{\mathrm{TH}}
    =
    \operatorname{Tr}_{K/F}\!\bigl(\operatorname{coef}_{1_G}(x\overline{y}^\ast)\bigr).
    $
\end{enumerate}
\end{proposition}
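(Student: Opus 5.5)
The plan is a direct coefficient computation. Write $x=\sum_{g\in G} a_g g$ and $y=\sum_{h\in G} b_h h$. Since $\ast$ is $K$-linear with $g^\ast=g^{-1}$, we have $y^\ast=\sum_{h\in G} b_h h^{-1}$. Multiplying out gives
$$
xy^\ast=\sum_{g,h\in G} a_g b_h\, gh^{-1}.
$$
The key step is to read off the coefficient of $1_G$. Since $gh^{-1}=1_G$ holds exactly when $g=h$, we get
$$
\operatorname{coef}_{1_G}(xy^\ast)=\sum_{g\in G} a_g b_g=\langle x,y\rangle_{\mathrm{E}}.
$$
Applying $\operatorname{Tr}_{K/F}$ to both sides then yields the trace Euclidean formula directly from its definition.

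For the Hermitian part, assume $m$ is even. First note that $\overline{y}^\ast=\sum_{h\in G}\overline{b_h}\,h^{-1}$, because bar acts only on coefficients and $\ast$ acts only on group elements, so the two operations commute. The same computation then gives
$$
\operatorname{coef}_{1_G}(x\overline{y}^\ast)=\sum_{g\in G} a_g\overline{b_g}=\langle x,y\rangle_{\mathrm{H}},
$$
and applying the trace gives the trace Hermitian formula.

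Alternatively, one could obtain the first formula from Lemma~\ref{identidad} by taking $a=x$ and replacing $x$ there with $1_G$. This gives $\langle x,y\rangle_{\mathrm E}=\langle 1_G,x^\ast y\rangle_{\mathrm E}=\operatorname{coef}_{1_G}(x^\ast y)$. One would then still need $\operatorname{coef}_{1_G}(x^\ast y)=\operatorname{coef}_{1_G}(xy^\ast)$, which holds because the two elements are $\ast$-images of each other and $\ast$ fixes the $1_G$ coefficient. The direct computation avoids this extra step, so I would use it.

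There is no real obstacle. The only points needing care are the observation that $gh^{-1}=1_G$ forces $g=h$, and the commutation of bar with $\ast$ in the Hermitian case.
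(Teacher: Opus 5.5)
Your direct coefficient computation is correct and is essentially the paper's proof: expand $xy^\ast=\sum_{g,h}a_gb_h\,gh^{-1}$ (and likewise $x\overline{y}^\ast$), note that $gh^{-1}=1_G$ forces $g=h$, then apply $\operatorname{Tr}_{K/F}$. One slip in your unused aside: $(x^\ast y)^\ast=y^\ast x$, not $xy^\ast$, so the step $\operatorname{coef}_{1_G}(x^\ast y)=\operatorname{coef}_{1_G}(xy^\ast)$ also needs the trace property $\operatorname{coef}_{1_G}(ab)=\operatorname{coef}_{1_G}(ba)$; alternatively, use the right-multiplication identity of Lemma~\ref{identidad}, which gives $\langle x,y\rangle_{\mathrm E}=\langle 1_G,yx^\ast\rangle_{\mathrm E}$, where $yx^\ast=(xy^\ast)^\ast$ genuinely is a $\ast$-image.
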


\begin{proof}
Write
$
x=\sum_{g\in G} x_g g,$ and $
y=\sum_{h\in G} y_h h.
$

\begin{enumerate}
    \item Since
    $
    y^\ast=\sum_{h\in G} y_h h^{-1},
    $
    we have
    $
    xy^\ast
    =
    \sum_{g,h\in G} x_g y_h\,gh^{-1}.
    $
    The coefficient of $1_G$ is obtained precisely from the terms with $g=h$. Hence, we obtain
    $
    \operatorname{coef}_{1_G}(xy^\ast)
    =
    \sum_{g\in G} x_g y_g
    =
    \langle x,y\rangle_{\mathrm{E}}.
    $
    Applying $\operatorname{Tr}_{K/F}$ yields
    $
    \langle x,y\rangle_{\mathrm{TE}}
    =
    \operatorname{Tr}_{K/F}\!\bigl(\operatorname{coef}_{1_G}(xy^\ast)\bigr).
    $

    \item Assume that $m$ is even. Since
    $
    \overline{y}^\ast
    =
    \sum_{h\in G} \overline{y_h}\,h^{-1},
    $
    it follows that, similarly,
    $
    x\overline{y}^\ast
    =
    \sum_{g,h\in G} x_g\overline{y_h}\,gh^{-1}.
    $
    Again, the coefficient of $1_G$ is obtained precisely from the terms with $g=h$. Therefore
    $
    \operatorname{coef}_{1_G}(x\overline{y}^\ast)
    =
    \sum_{g\in G} x_g\overline{y_g}
    =
    \langle x,y\rangle_{\mathrm{H}}.
    $
    Applying $\operatorname{Tr}_{K/F}$ yields
    $
    \langle x,y\rangle_{\mathrm{TH}}
    =
    \operatorname{Tr}_{K/F}\!\bigl(\operatorname{coef}_{1_G}(x\overline{y}^\ast)\bigr).
    $
\end{enumerate}
\end{proof}

\begin{definition}
Let $C \leq KG$ be an additive left group code in $KG$. The duals of $C$ with respect to the Euclidean and trace-Euclidean forms are denoted by $C^{\perp_{\mathrm{E}}}$ and $C^{\perp_{\mathrm{TE}}}$, respectively. If $m$ is even, the duals of $C$ with respect to the Hermitian and trace-Hermitian forms are denoted by $C^{\perp_{\mathrm{H}}}$ and $C^{\perp_{\mathrm{TH}}}$, respectively.
\end{definition}

\begin{proposition}
Let $C \leq KG$ be an additive left group code in $KG$. Then $C^{\perp_{\mathrm{E}}}$ and $C^{\perp_{\mathrm{TE}}}$ are additive left group codes in $KG$. If $m$ is even, then $C^{\perp_{\mathrm{H}}}$ and $C^{\perp_{\mathrm{TH}}}$ are also additive left group codes in $KG$.
\end{proposition}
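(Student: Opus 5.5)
We must show that each dual is closed under addition, under scalar multiplication by $F$, and under left multiplication by elements of $FG$. By $F$-linearity, closure under $FG$ reduces to closure under left multiplication by group elements $g\in G$.

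\emph{Subspace property.} For any of the four forms, $C^{\perp_\star}=\{v:\langle v,c\rangle_\star=0\ \forall c\in C\}$ is closed under addition by biadditivity. For $\alpha\in F$ we have $\langle \alpha v,c\rangle_\star=\alpha\langle v,c\rangle_\star=0$; for the Euclidean and Hermitian forms this holds even for $\alpha\in K$, and in particular for $\alpha\in F$. So each dual is an $F$-subspace.

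\emph{Invariance under $G$.} This is the key step, and it uses Lemma~\ref{identidad}. Let $v\in C^{\perp_\star}$, $g\in G$ and $c\in C$.
\begin{itemize}
\item In the Euclidean and trace-Euclidean cases, Lemma~\ref{identidad}(1) gives $\langle gv,c\rangle_\star=\langle v,g^\ast c\rangle_\star=\langle v,g^{-1}c\rangle_\star$. Since $C$ is a left $FG$-module, $g^{-1}c\in C$, so this value is $0$.
\item In the Hermitian and trace-Hermitian cases (with $m$ even), Lemma~\ref{identidad}(2) gives $\langle gv,c\rangle_\star=\langle v,\overline{g}^\ast c\rangle_\star$. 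Since $g$ has coefficient $1\in F$, we get $\overline{g}=g$, so $\overline{g}^\ast=g^{-1}$ and the same argument applies.
\end{itemize}

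More generally, for $a\in FG$ we have $\overline{a}=a$, because the involution of $K$ fixes $F$ pointwise. Hence $\overline{a}^\ast=a^\ast\in FG$, and one can argue directly with $a$ instead of $g$. It follows that $gv\in C^{\perp_\star}$, and by linearity $av\in C^{\perp_\star}$ for all $a\in FG$.

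The only point needing care is this last observation: $\overline{a}^\ast$ stays in $FG$, so that it maps $C$ into $C$ in the Hermitian cases. With that in hand, the rest is routine.
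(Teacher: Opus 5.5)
Your proof is correct and follows essentially the paper's argument: use Lemma~\ref{identidad} to move $a\in FG$ across the form as $a^\ast$ (or $\overline{a}^\ast=a^\ast$ in the Hermitian cases), which again lies in $FG$ and therefore preserves $C$. The differences are cosmetic: you reduce to group elements and check the $F$-subspace property explicitly, whereas the paper notes only that the dual is an additive subgroup and then verifies closure under $FG$ directly, which already includes the scalars $F\subseteq FG$.
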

\begin{proof}
Let $C^{\perp_\star}$ denote any of the duals with respect to the forms under consideration. By definition,
$$
C^{\perp_\star}=\{x\in KG:\langle x,c\rangle_\star=0 \text{ for all } c\in C\}.
$$
Clearly, $C^{\perp_\star}$ is an additive subgroup of $KG$. We only need to show that it is closed under left multiplication by elements of $FG$.

Let $a\in FG$ and $x\in C^{\perp_\star}$. For any $c\in C$, Lemma~\ref{identidad} gives
$$
\langle ax,c\rangle_\star=\langle x,\theta(a)c\rangle_\star,
$$
where $\theta(a)=a^\ast$ for the Euclidean and trace Euclidean forms, and $\theta(a)=\overline{a}^\ast$ for the Hermitian and trace Hermitian forms. In each case, $\theta(a)\in FG$. Since $C$ is an additive left group code in $KG$, it follows that $\theta(a)c\in C$. Hence,
$$
\langle x,\theta(a)c\rangle_\star=0,
$$
because $x\in C^{\perp_\star}$. Therefore $\langle ax,c\rangle_\star=0$ for all $c\in C$, which shows that $ax\in C^{\perp_\star}$.
\end{proof}
\begin{definition}
Let $C \leq KG$ be an additive left group code. For $\star\in\{\mathrm{E},\mathrm{TE}\}$, we say that $C$ is \emph{$\star$-LCD} if $C\cap C^{\perp_{\star}}=\{0\}$, and \emph{$\star$-self-dual} if $C=C^{\perp_{\star}}$. If $m$ is even, the same terminology is used for $\star\in\{\mathrm{H},\mathrm{TH}\}$.
\end{definition}

Since the trace Euclidean form is non-degenerate, every additive group code $C \leq KG$ satisfies
$$
\dim_F(C)+\dim_F(C^{\perp_{\mathrm{TE}}})=mn.
$$
If $m$ is even, then the trace Hermitian form is also non-degenerate, and hence every additive code $C \leq KG$ satisfies
$
\dim_F(C)+\dim_F(C^{\perp_{\mathrm{TH}}})=mn.
$
In particular,
$
\left(C^{\perp_{\mathrm{TE}}}\right)^{\perp_{\mathrm{TE}}}=C$ and
$\left(C^{\perp_{\mathrm{TH}}}\right)^{\perp_{\mathrm{TH}}}=C.
$

\begin{remark}
For $\star=\mathrm{TE}$, an additive left group code $C \leq KG$ is $\star$-LCD if and only if
$$
C\oplus C^{\perp_{\mathrm{TE}}}=KG
$$
as a direct sum of left $FG$-submodules, since $\langle\cdot,\cdot\rangle_{\mathrm{TE}}$ is a non-degenerate $F$-bilinear form on $KG$. If $m$ is even, the same statement holds for $\star=\mathrm{TH}$.
\end{remark}

\begin{proposition}
\label{proposition:Ksubspace-orthogonals}
Let $C\leq KG$ be a left ideal, that is, a left group code in $KG$. Then
$
C^{\perp_{\mathrm{E}}}=C^{\perp_{\mathrm{TE}}}.
$
If $m$ is even, then also
$
C^{\perp_{\mathrm{H}}}=C^{\perp_{\mathrm{TH}}}.
$
\end{proposition}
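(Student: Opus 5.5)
The plan is to prove both equalities by double inclusion, using only that $C$ is a $K$-subspace of $KG$ and that $\operatorname{Tr}_{K/F}$ is a nonzero $F$-linear map. The inclusion $C^{\perp_{\mathrm{E}}}\subseteq C^{\perp_{\mathrm{TE}}}$ is immediate from the definitions. If $\langle x,c\rangle_{\mathrm{E}}=0$ for all $c\in C$, then applying the trace gives $\langle x,c\rangle_{\mathrm{TE}}=\operatorname{Tr}_{K/F}(\langle x,c\rangle_{\mathrm{E}})=0$ for all $c\in C$. The same argument gives $C^{\perp_{\mathrm{H}}}\subseteq C^{\perp_{\mathrm{TH}}}$ when $m$ is even.

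For the reverse inclusion in the Euclidean case, take $x\in C^{\perp_{\mathrm{TE}}}$ and $c\in C$, and set $\beta=\langle x,c\rangle_{\mathrm{E}}\in K$. Since $C$ is a left ideal, it is closed under multiplication by scalars $\lambda\in K\subseteq KG$, so $\lambda c\in C$ for every $\lambda\in K$. By $K$-bilinearity of the Euclidean form, $\langle x,\lambda c\rangle_{\mathrm{E}}=\lambda\beta$. Hence
$$\operatorname{Tr}_{K/F}(\lambda\beta)=\langle x,\lambda c\rangle_{\mathrm{TE}}=0\quad\text{for all }\lambda\in K.$$
If $\beta\neq 0$, then $\lambda\beta$ ranges over all of $K$ as $\lambda$ does. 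The trace would then vanish identically on $K$, contradicting its surjectivity onto $F\neq 0$. Therefore $\beta=0$, so $x\in C^{\perp_{\mathrm{E}}}$.

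The Hermitian case is the same argument with one adjustment: the form is conjugate-linear in the second variable, so $\langle x,\lambda c\rangle_{\mathrm{H}}=\overline{\lambda}\,\beta$ with $\beta=\langle x,c\rangle_{\mathrm{H}}$. Since $\lambda\mapsto\overline{\lambda}$ is a bijection of $K$, the products $\overline{\lambda}\beta$ again exhaust $K$ when $\beta\neq 0$, and the surjectivity of the trace again forces $\beta=0$.

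There is no real obstacle here. The only point needing care is that the whole argument uses only that $C$ is closed under multiplication by scalars of $K$, which is exactly the property that fails for general additive codes. This is also why the analogous equality is not expected for arbitrary $FG$-submodules.
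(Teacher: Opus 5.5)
Your proposal is correct and follows the paper's proof. Both arguments get the easy inclusion by applying the trace, and the reverse inclusion from the $K$-scalar closure of $C$ together with the fact that $\operatorname{Tr}_{K/F}$ is not identically zero; the paper argues contrapositively where you argue by contradiction, and your use of the bijection $\lambda\mapsto\overline{\lambda}$ supplies the Hermitian detail that the paper only calls ``analogous''.
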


\begin{proof}
Since $C$ is a left ideal of $KG$, it is in particular a $K$-subspace of $KG$. We prove the Euclidean case; the Hermitian case is analogous.

Because
$
\langle x,y\rangle_{\mathrm{TE}}
=
\operatorname{Tr}_{K/F}\bigl(\langle x,y\rangle_{\mathrm{E}}\bigr),
$
it follows immediately that
$
C^{\perp_{\mathrm{E}}}\subseteq C^{\perp_{\mathrm{TE}}}.
$

For the reverse inclusion, let $y\notin C^{\perp_{\mathrm{E}}}$. Then there exists $x\in C$ such that
$
a:=\langle x,y\rangle_{\mathrm{E}}\neq 0.
$
Since $C$ is a $K$-subspace, we have $\lambda x\in C$ for every $\lambda\in K$. By the non-degeneracy of the trace map, there exists $\lambda\in K$ such that
$
\operatorname{Tr}_{K/F}(\lambda a)\neq 0.
$
Therefore,
$$
\langle \lambda x,y\rangle_{\mathrm{TE}}
=
\operatorname{Tr}_{K/F}\bigl(\langle \lambda x,y\rangle_{\mathrm{E}}\bigr)
=
\operatorname{Tr}_{K/F}(\lambda \langle x,y\rangle_{\mathrm{E}})
=
\operatorname{Tr}_{K/F}(\lambda a)\neq 0.
$$
Hence $y\notin C^{\perp_{\mathrm{TE}}}$, and so
$
C^{\perp_{\mathrm{TE}}}\subseteq C^{\perp_{\mathrm{E}}}.
$
We conclude that
$
C^{\perp_{\mathrm{E}}}=C^{\perp_{\mathrm{TE}}}.
$

If $m$ is even, the Hermitian case is proved in exactly the same way, using
$
\langle x,y\rangle_{\mathrm{TH}}
=
\operatorname{Tr}_{K/F}\bigl(\langle x,y\rangle_{\mathrm{H}}\bigr).
$
\end{proof}

\begin{remark}\label{hexa-code}
Recall that the Hexacode $\mathcal{H}_6$ is a Hermitian self-dual
$\mathbb{F}_4$-linear code with parameters $[6,3,4]$, and hence also a
trace-Hermitian self-dual additive code with parameters $(6,2^6,4)$. By
\cite{BBW24}, $\mathcal{H}_6$ is not an $\mathbb{F}_4$-linear group code for
any group of order $6$.

We claim that $\mathcal{H}_6$ is not an additive left group code for any
group of order $6$. Indeed, let $G\in\{C_6,S_3\}$ and suppose, for a
contradiction, that $\mathcal{H}_6$ is an additive left group code in
$\mathbb{F}_4G$. Then $\mathcal{H}_6$ is stable under left multiplication by
$\mathbb{F}_2G$. Since $\mathcal{H}_6$ is also $\mathbb{F}_4$-linear, it is
stable under multiplication by the scalars in $\mathbb{F}_4$. Therefore, for
$a=\sum_{g\in G}\lambda_g g\in\mathbb{F}_4G$ and
$c\in\mathcal{H}_6$, one has
$ac=\sum_{g\in G}\lambda_g(gc)\in\mathcal{H}_6$. Thus,
$\mathcal{H}_6$ is stable under left multiplication by $\mathbb{F}_4G$, and
hence it is an $\mathbb{F}_4$-linear left group code. This contradicts
\cite{BBW24}.

More generally, let $C\leq K^n$ be a $K$-linear code. If $C$ is not a
$K$-linear left group code for a group $G$, then $C$ cannot be an additive
left group code for $G$.
\end{remark}
\section{Projectors for additive group codes}
An important class of group codes in $KG$ is that of idempotent codes. These
are the group codes of the form $C=KGe$, where $e\in KG$ is an idempotent,
that is, $e^2=e$. In this case, $C$ is the image of the map
$\rho_e:KG\longrightarrow KG$ defined by $\rho_e(x)=xe$. Since
$\rho_e^2=\rho_e$, the map $\rho_e$ is a $KG$-linear projector, and
$C=\operatorname{Im}(\rho_e)$ is a left ideal of $KG$.

Conversely, every $KG$-linear projector on $KG$ is induced by right
multiplication by an idempotent of $KG$. Indeed, let
$P\in\operatorname{End}_{KG}(KG)$ satisfy $P^2=P$, and set $e=P(1)$. Then,
for every $x\in KG$, one has $P(x)=xP(1)=xe$. Moreover,
$e^2=P(e)=P^2(1)=P(1)=e$. Hence $e$ is an idempotent and
$\operatorname{Im}(P)=KGe$. Thus, the idempotent group codes are precisely
the codes that arise as images of $KG$-linear projectors on $KG$.

However, this classical construction is too restrictive in the additive
setting. Indeed, additive left group codes are only required to be left
$FG$-submodules of $KG$ and need not be $K$-linear, so they do not
necessarily arise as left ideals of $KG$. This naturally leads to the more
general framework of $FG$-linear projectors on $KG$ and their images.

\subsection{$FG$-projectors}
\begin{definition}
An element $P\in \operatorname{End}_{FG}(KG)$ is called an \emph{$FG$-linear projector} if
$
P^2=P.
$

\end{definition}
Note that if $P\in \operatorname{End}_{FG}(KG)$ is a projector, then $\operatorname{Im}(P)$ and $\ker(P)$ are left $FG$-submodules of $KG$, that is, left group codes in $KG$, and
$
KG=\operatorname{Im}(P)\oplus\ker(P)
$
as a direct sum of $FG$-modules.

\begin{definition}
Let $B$ be a non-degenerate $F$-bilinear form on $KG$, and let $T\in \operatorname{End}_{FG}(KG)$. The \emph{adjoint} of $T$ with respect to $B$ is the unique map $T^{\ast_B}\in \operatorname{End}_{F}(KG)$ such that $B(Tx,y)=B(x,T^{\ast_B} y)$ for all $x,y\in KG$.
\end{definition}
\begin{remark}\label{adjunto-E-TE}
Let $T\in \operatorname{End}_{FG}(KG)$. The adjoint of $T$ with respect to the form $\langle\cdot,\cdot\rangle_\star$ will be denoted by $T^{\ast_\star}$. However, whenever no confusion is likely to arise, we shall simply write $T^\ast$.
\end{remark}
Since $KG$ is finite-dimensional over $F$, every non-degenerate $F$-bilinear form on $KG$ induces an $F$-linear isomorphism
$
KG \longrightarrow \operatorname{Hom}_F(KG,F), \;y\longmapsto \bigl(x\mapsto B(x,y)\bigr).
$
Hence, if $B$ is a non-degenerate $F$-bilinear form on $KG$ and $T\in \operatorname{End}_{FG}(KG)$, then for each $y\in KG$ the map $x\mapsto B(Tx,y)$ is an $F$-linear functional on $KG$, so there exists a unique element $T^\ast y\in KG$ such that
$
B(Tx,y)=B(x,T^\ast y)\text{ for all }x\in KG.
$
In particular, every $FG$-linear endomorphism of $KG$ admits a unique adjoint with respect to $\langle\cdot,\cdot\rangle_{\mathrm{TE}}$ and, when $m$ is even, with respect to $\langle\cdot,\cdot\rangle_{\mathrm{TH}}$.

Note that the same argument does not apply to $\langle\cdot,\cdot\rangle_{\mathrm{E}}$ or, when $m$ is even, to $\langle\cdot,\cdot\rangle_{\mathrm{H}}$, since these are $K$-valued forms. If $T$ is merely $FG$-linear, then in general it need not be $K$-linear, and therefore the map $x\mapsto \langle T(x),y\rangle_\star$ need not be $K$-linear. Consequently, the adjoint of $T$ with respect to $\langle\cdot,\cdot\rangle_{\mathrm{E}}$ or $\langle\cdot,\cdot\rangle_{\mathrm{H}}$ need not exist as a $K$-linear operator.

On the other hand, if $T$ is $K$-linear, then its adjoint with respect to $\langle\cdot,\cdot\rangle_{\mathrm{E}}$ exists, and one has $T^{\ast_{\mathrm{TE}}}=T^{\ast_{\mathrm{E}}}$. Indeed, if
$$
\langle Tx,y\rangle_{\mathrm{E}}=\langle x,T^{\ast_{\mathrm{E}}}y\rangle_{\mathrm{E}}\;
\text{ for all }x,y\in KG,
$$
then applying the trace map $\operatorname{Tr}_{K/F}$ to both sides gives
$$
\langle Tx,y\rangle_{\mathrm{TE}}=\langle x,T^{\ast_{\mathrm{E}}}y\rangle_{\mathrm{TE}}\;
\text{ for all }x,y\in KG.
$$
By uniqueness of the adjoint with respect to $\langle\cdot,\cdot\rangle_{\mathrm{TE}}$, it follows that $T^{\ast_{\mathrm{TE}}}=T^{\ast_{\mathrm{E}}}$. Similarly, when $m$ is even and $T$ is $K$-linear, one has $T^{\ast_{\mathrm{TH}}}=T^{\ast_{\mathrm{H}}}$.

\begin{lemma}\label{lem:adjoint-FG-linear}
Let $\star\in\{\mathrm{TE},\mathrm{TH}\}$, where the case $\star=\mathrm{TH}$ is considered only when $m$ is even. If $T\in \operatorname{End}_{FG}(KG)$, then its adjoint $T^\ast$ with respect to $\langle\cdot,\cdot\rangle_\star$ also belongs to $\operatorname{End}_{FG}(KG)$.
\end{lemma}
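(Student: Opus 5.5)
The plan is to verify $F$-linearity and $G$-equivariance of $T^\ast$ separately, using only the defining identity of the adjoint, the uniqueness argument given just before the statement, and Lemma~\ref{identidad}.

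First, $T^\ast$ is $F$-linear. The form $\langle\cdot,\cdot\rangle_\star$ is $F$-bilinear and non-degenerate, so for $y,z\in KG$ and $\alpha\in F$ we have
$$\langle x,T^\ast(y+\alpha z)\rangle_\star=\langle Tx,y\rangle_\star+\alpha\langle Tx,z\rangle_\star=\langle x,T^\ast y+\alpha T^\ast z\rangle_\star$$
for all $x$. Non-degeneracy then gives $T^\ast(y+\alpha z)=T^\ast y+\alpha T^\ast z$. In the TH case we use that the trace-Hermitian form is $F$-bilinear, because the involution of $K$ fixes $F$.

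Second, $T^\ast$ commutes with left multiplication by group elements. Since $FG$ is spanned over $F$ by $G$, it suffices to show $T^\ast(gy)=gT^\ast y$ for $g\in G$. By Lemma~\ref{identidad}, for $a=g\in FG$ we have $\langle gx,y\rangle_\star=\langle x,\theta(g)y\rangle_\star$. Here $\theta(g)=g^\ast=g^{-1}$ in the TE case, and $\theta(g)=\overline{g}^\ast=g^{-1}$ in the TH case, because $g$ has coefficient $1\in F$. Then, for all $x$,
$$\langle x,T^\ast(gy)\rangle_\star=\langle Tx,gy\rangle_\star=\langle g^{-1}Tx,y\rangle_\star=\langle T(g^{-1}x),y\rangle_\star=\langle g^{-1}x,T^\ast y\rangle_\star=\langle x,gT^\ast y\rangle_\star.$$
The second and last equalities apply Lemma~\ref{identidad} with $a=g^{-1}$ and $a=g$, noting $(g^{-1})^\ast=g$. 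The third uses the $FG$-linearity of $T$. Non-degeneracy gives $T^\ast(gy)=gT^\ast y$. Combined with $F$-linearity, this yields $T^\ast(ay)=aT^\ast y$ for all $a\in FG$.

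The only step needing care is the second one: we must check that $\theta$ sends $g$ to $g^{-1}$, and in the TH case that conjugation is trivial on $FG$. This holds because the bar involution fixes $F$ pointwise. Everything else is routine.
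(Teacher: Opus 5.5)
Your proof is correct and runs the same argument as the paper, namely the chain $\langle x,T^\ast(ay)\rangle_\star=\langle Tx,ay\rangle_\star=\langle a^\ast Tx,y\rangle_\star=\langle T(a^\ast x),y\rangle_\star=\langle a^\ast x,T^\ast y\rangle_\star=\langle x,aT^\ast y\rangle_\star$ followed by non-degeneracy; the only differences are that the paper applies it directly to arbitrary $a\in FG$ (using $\overline{a}=a$ in the TH case) rather than to $g\in G$ followed by linear extension, and it treats $F$-linearity of $T^\ast$ as built into the definition of the adjoint rather than proving it. One cosmetic slip: both equalities you attribute to Lemma~\ref{identidad} are its instance with $a=g^{-1}$ (or, equivalently, the instance $a=g$ combined with the symmetry of the form).
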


\begin{proof}
We treat the case $\star=\mathrm{TE}$; the case $\star=\mathrm{TH}$ is analogous. Let $a\in FG$ and $x,y\in KG$. Since $T$ is $FG$-linear, one has $T(a^\ast x)=a^\ast T(x)$. Using Lemma~\ref{identidad}, we obtain

\begin{align*}
\langle x,T^\ast(ay)\rangle_{\mathrm{TE}}
&= \langle T(x),ay\rangle_{\mathrm{TE}}
= \langle a^\ast T(x),y\rangle_{\mathrm{TE}}
= \langle T(a^\ast x),y\rangle_{\mathrm{TE}} \\
&= \langle a^\ast x,T^\ast y\rangle_{\mathrm{TE}}
= \langle x,aT^\ast y\rangle_{\mathrm{TE}}.
\end{align*}
Since $\langle\cdot,\cdot\rangle_{\mathrm{TE}}$ is non-degenerate, it follows that $T^\ast(ay)=aT^\ast(y)$ for all $a\in FG$ and $y\in KG$. Thus $T^\ast\in \operatorname{End}_{FG}(KG)$.

If $m$ is even, the same argument applies to $\star=\mathrm{TH}$, using Lemma~\ref{identidad} with $\overline{a}^\ast$ in place of $a^\ast$.
\end{proof}

\begin{lemma}\label{lem:adjoint-basic}
Let $\langle\cdot,\cdot\rangle$ be a non-degenerate $F$-bilinear form on $KG$, and let $T,S\in \operatorname{End}_{FG}(KG)$. Then the following hold:
\begin{enumerate}
    \item $(T^\ast)^\ast=T$.
    \item $(TS)^\ast=S^\ast T^\ast$.
    \item $(\operatorname{Im}T)^\perp=\ker(T^\ast)$.
    \item $(\ker T)^\perp=\operatorname{Im}(T^\ast)$.
\end{enumerate}
Here $T^\ast$ denotes the adjoint of $T$ with respect to $\langle\cdot,\cdot\rangle$.
\end{lemma}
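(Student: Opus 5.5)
The plan is to derive all four items from the defining identity $\langle Tx,y\rangle=\langle x,T^\ast y\rangle$, together with non-degeneracy and the uniqueness of adjoints established just before Lemma~\ref{lem:adjoint-FG-linear}. One point needs care: the form is only assumed non-degenerate and $F$-bilinear, not symmetric. In our applications ($\mathrm{TE}$, $\mathrm{TH}$) it is symmetric, and I will assume this, as is implicit in the remark that these forms are non-degenerate symmetric bilinear forms. Symmetry, or at least reflexivity, is what makes ``the'' adjoint and ``the'' orthogonal unambiguous. By Lemma~\ref{lem:adjoint-FG-linear}, or simply since $T^\ast\in\operatorname{End}_F(KG)$, all adjoints below exist.

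For (1), I use symmetry to write
\[
\langle T^\ast x,y\rangle=\langle y,T^\ast x\rangle=\langle Ty,x\rangle=\langle x,Ty\rangle
\]
for all $x,y$. By uniqueness of the adjoint of $T^\ast$, this gives $(T^\ast)^\ast=T$.

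For (2), I compute
\[
\langle TSx,y\rangle=\langle Sx,T^\ast y\rangle=\langle x,S^\ast T^\ast y\rangle,
\]
and uniqueness again gives $(TS)^\ast=S^\ast T^\ast$.

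For (3), take $y\in KG$. Then $y\in\ker T^\ast$ if and only if $\langle x,T^\ast y\rangle=0$ for all $x$, by non-degeneracy. This holds if and only if $\langle Tx,y\rangle=0$ for all $x$, which (using symmetry to match the convention $\langle v,w\rangle=0$ in the definition of $W^\perp$) is equivalent to $y\in(\operatorname{Im}T)^\perp$.

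For (4), I apply (3) to $T^\ast$ and use (1) to get $(\operatorname{Im}T^\ast)^\perp=\ker T$. I then take orthogonals on both sides and use the double-orthogonal identity $(W^\perp)^\perp=W$ for non-degenerate reflexive forms, stated in the preliminaries. This yields $\operatorname{Im}T^\ast=(\ker T)^\perp$.

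The only real obstacle is the symmetry/reflexivity issue in (1) and (3)–(4). If one insists on a general non-degenerate bilinear form, left and right adjoints and orthogonals must be distinguished. I will handle this by stating the assumption of symmetry, which covers every form used in the paper.
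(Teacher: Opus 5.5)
Your proposal is correct and follows the paper's proof essentially step for step. Items (1)--(3) come from the defining identity of the adjoint together with non-degeneracy and uniqueness, and (4) comes from applying (3) to $T^\ast$, then using (1) and $(W^\perp)^\perp=W$. Your explicit symmetry hypothesis is more careful than the paper, which uses symmetry silently in the step $\langle x,T^\ast y\rangle=\langle (T^\ast)^\ast x,y\rangle$ of (1) and in the identification in (3). Item (1) genuinely fails for non-reflexive forms, so restricting to $\mathrm{TE}$ and $\mathrm{TH}$, the only forms to which the lemma is applied, is the right fix.
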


\begin{proof}
\begin{enumerate}
    \item For all $x,y\in KG$, we have $\langle Tx,y\rangle=\langle x,T^\ast y\rangle=\langle (T^\ast)^\ast x,y\rangle$. Since $\langle\cdot,\cdot\rangle$ is non-degenerate, it follows that $(T^\ast)^\ast=T$.

    \item For all $x,y\in KG$, we have
    $\langle TSx,y\rangle=\langle Sx,T^\ast y\rangle=\langle x,S^\ast T^\ast y\rangle$.
    Hence $(TS)^\ast=S^\ast T^\ast$.

    \item Let $y\in KG$. Then $y\in (\operatorname{Im}T)^\perp$ if and only if $\langle Tx,y\rangle=0$ for all $x\in KG$, which is equivalent to $\langle x,T^\ast y\rangle=0$ for all $x\in KG$. Since $\langle\cdot,\cdot\rangle$ is non-degenerate, this holds if and only if $T^\ast y=0$. Hence $(\operatorname{Im}T)^\perp=\ker(T^\ast)$.

    \item Applying part~(3) to $T^\ast$, we get $(\operatorname{Im}T^\ast)^\perp=\ker((T^\ast)^\ast)$. By part~(1), $(T^\ast)^\ast=T$. Therefore $(\operatorname{Im}T^\ast)^\perp=\ker(T)$. Taking orthogonals again gives $(\ker T)^\perp=\operatorname{Im}(T^\ast)$.
\end{enumerate}
\end{proof}

The projector approach to additive left group codes is naturally connected with the notions of direct summands and projective modules over $FG$. Recall that a left $FG$-module $M$ is called \textit{projective} if and only if it is a direct summand of a free left $FG$-module.

The next result shows that additive left group codes arising as images of $FG$-linear projectors are precisely the direct summands of $KG$, and hence form a natural subclass of projective left $FG$-modules.

\begin{proposition}\label{proj-summand}
Let $C\le KG$ be an additive left group code, equivalently, a left $FG$-submodule of $KG$. Then the following are equivalent:
\begin{enumerate}
    \item There exists an $FG$-linear projector $P\in\operatorname{End}_{FG}(KG)$ such that $C=\operatorname{Im}(P)$.
    \item $C$ is a direct summand of $KG$ as a left $FG$-module.
\end{enumerate}
\end{proposition}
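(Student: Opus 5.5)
The proof is the standard correspondence between idempotent endomorphisms and direct sum decompositions, carried out in the category of left $FG$-modules. I will prove the two implications separately.

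For (1)$\Rightarrow$(2), let $P\in\operatorname{End}_{FG}(KG)$ with $P^2=P$ and $C=\operatorname{Im}(P)$. Take $D=\ker(P)$. Both $C$ and $D$ are left $FG$-submodules, because $P$ is $FG$-linear. Every $x\in KG$ decomposes as $x=P(x)+(x-P(x))$. Here $P(x)\in C$, and $x-P(x)\in D$ because $P(x-P(x))=P(x)-P^2(x)=0$. If $y\in C\cap D$, write $y=P(z)$; then $y=P(z)=P^2(z)=P(y)=0$. Hence $KG=C\oplus D$ as left $FG$-modules, as already noted after the definition of $FG$-linear projectors.

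For (2)$\Rightarrow$(1), suppose $KG=C\oplus D$ with $D$ a left $FG$-submodule. Define $P:KG\to KG$ by $P(c+d)=c$ for $c\in C$ and $d\in D$. This map is well defined because the decomposition of each element is unique. It is additive, and it is $FG$-linear: for $a\in FG$ we have $a(c+d)=ac+ad$ with $ac\in C$ and $ad\in D$, since both are submodules, so $P(a(c+d))=ac=aP(c+d)$. Moreover $P^2=P$, because $P(c)=c$ for all $c\in C$. Finally $\operatorname{Im}(P)=C$, since $P$ maps into $C$ and fixes $C$ pointwise.

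No step is a real obstacle. The only point needing care is that the complement $D$ must itself be an $FG$-submodule, not merely an $F$-subspace. This is exactly what ``direct summand as a left $FG$-module'' provides, and it is what makes $P$ lie in $\operatorname{End}_{FG}(KG)$ rather than only in $\operatorname{End}_F(KG)$.
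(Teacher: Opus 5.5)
Your proof is correct and follows the same route as the paper's: you use $KG=\operatorname{Im}(P)\oplus\ker(P)$ for (1)$\Rightarrow$(2), and the projection onto $C$ along an $FG$-submodule complement $D$ for (2)$\Rightarrow$(1). Your version simply writes out in more detail the verifications the paper leaves implicit: that the decomposition holds, that $P$ is $FG$-linear, and that $P^2=P$.
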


\begin{proof}
Assume first that $C=\operatorname{Im}(P)$ for some $FG$-linear projector $P$ on $KG$. Since $P^2=P$, we have $KG=\operatorname{Im}(P)\oplus\ker(P)$ as a direct sum of left $FG$-modules. Hence $C$ is a direct summand of $KG$.

Conversely, suppose that $C$ is a direct summand of $KG$ as a left $FG$-module. Then there exists a left $FG$-submodule $C'\le KG$ such that $KG=C\oplus C'$. Define $P:KG\to KG$ by $P(c+c')=c$ for all $c\in C$ and $c'\in C'$. Then $P$ is well defined, $FG$-linear, and satisfies $P^2=P$. Moreover, $\operatorname{Im}(P)=C$.
\end{proof}

\begin{definition}\label{def:projector-additive-code}
An additive left group code $C\le KG$ is called a \textit{projector additive left group code} if there exists an $FG$-linear projector $P\in\operatorname{End}_{FG}(KG)$ such that
$
C=\operatorname{Im}(P).
$

\end{definition}

\begin{remark}\label{projective}
By Proposition~\ref{proj-summand}, an additive left group code $C \leq KG$ is a projector additive left group code if and only if it is a direct summand of $KG$ as a left $FG$-module.
Moreover, since $KG$ is a free left $FG$-module, every projector additive left group code is projective as a left $FG$-module.
\end{remark}
\begin{corollary}\label{cor:semisimple-projector}
Assume that $FG$ is semisimple; for instance, this holds whenever $\operatorname{char}(F)\nmid |G|$. Let $C\subseteq KG$. Then the following conditions are equivalent:
\begin{enumerate}
    \item $C$ is an additive left group code in $KG$.
    \item $C$ is a direct summand of $KG$ as a left $FG$-module.
    \item $C$ is a projector additive left group code.
    \item $C$ is a projective left $FG$-submodule of $KG$.
\end{enumerate}
\end{corollary}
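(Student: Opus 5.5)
The plan is to prove the cycle of implications $(1)\Rightarrow(2)\Rightarrow(3)\Rightarrow(4)\Rightarrow(1)$, relying mostly on Proposition~\ref{proj-summand} and Remark~\ref{projective}. Semisimplicity of $FG$ is used only in the first step.

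For $(1)\Rightarrow(2)$, let $C$ be an additive left group code, that is, a left $FG$-submodule of $KG$. Since $KG$ is a free left $FG$-module, of rank $m$ (any $F$-basis of $K$ gives an $FG$-basis), it is a finitely generated module over the semisimple ring $FG$. Every module over a semisimple ring is semisimple. Hence every submodule has a complement, so there is a left $FG$-submodule $C'$ with $KG=C\oplus C'$.

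Explicitly, one can use Maschke-style averaging when $\operatorname{char}(F)\nmid|G|$. Take any $F$-linear projection $\pi$ of $KG$ onto $C$ and replace it by
\[
\tilde\pi=\frac{1}{|G|}\sum_{g\in G} g\,\pi\, g^{-1}.
\]
This $\tilde\pi$ is $FG$-linear and idempotent with image $C$. In the general semisimple case we simply invoke complete reducibility.

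The implication $(2)\Rightarrow(3)$ is exactly Proposition~\ref{proj-summand}. The implication $(3)\Rightarrow(4)$ is Remark~\ref{projective}: a direct summand of the free module $KG$ is projective, and a projector code is by definition a submodule of $KG$.

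For $(4)\Rightarrow(1)$, a projective left $FG$-submodule of $KG$ is in particular a left $FG$-submodule of $KG$, hence an additive left group code by definition.

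The only step with content is $(1)\Rightarrow(2)$. I expect no real obstacle there beyond correctly citing complete reducibility of modules over semisimple rings, or giving the averaging argument. That argument requires checking that $\tilde\pi$ restricts to the identity on $C$, which holds because $C$ is $G$-stable, and that $\tilde\pi$ commutes with every $h\in G$, which follows from reindexing the sum.
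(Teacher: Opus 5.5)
Your proposal is correct and matches the paper's proof: semisimplicity of $FG$ makes every submodule of $KG$ a direct summand, and the remaining implications come from Proposition~\ref{proj-summand}, Remark~\ref{projective}, and the observation that a projective submodule of $KG$ is in particular a submodule. Your explicit Maschke averaging $\tilde\pi=\frac{1}{|G|}\sum_{g\in G} g\pi g^{-1}$ is a correct, more constructive way to obtain the projector, whereas the paper simply cites complete reducibility; since $FG$ is semisimple exactly when $\operatorname{char}(F)\nmid|G|$, your averaging argument in fact covers the general case.
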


\begin{proof}
Since $FG$ is semisimple, every left $FG$-module is semisimple. In particular, $KG$ is semisimple as a left $FG$-module, and therefore every left $FG$-submodule of $KG$ is a direct summand of $KG$. Thus $(1)\Leftrightarrow(2)$.

The equivalence $(2)\Leftrightarrow(3)$ follows from Proposition~\ref{proj-summand}, and $(3)\Rightarrow(4)$ follows from Remark~\ref{projective}. Finally, $(4)\Rightarrow(1)$ is immediate.
\end{proof}

We now show that the classical ideal-theoretic construction is recovered as a special case of the projector approach.

\begin{proposition}\label{idempotent-projector}
Let $e\in KG$ satisfy $e^2=e$. Then the following hold:
\begin{enumerate}
    \item Right multiplication by $e$ defines an $FG$-linear projector $\rho_e:KG\longrightarrow KG$ by $\rho_e(x)=xe$.
    \item $
\operatorname{Im}(\rho_e)=KGe$ and $
\ker(\rho_e)=KG(1-e).
$ Consequently,
$
KG=KGe\oplus KG(1-e)
$
as a direct sum of left $FG$-modules.
\item $
\rho_e(FG)=FGe$ is an additive left group code.
\end{enumerate}
\end{proposition}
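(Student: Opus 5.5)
The plan is to verify the three items directly from the definitions, using only associativity of multiplication in $KG$ and the idempotency $e^2=e$.

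For item (1), I will first check that $\rho_e$ is $FG$-linear. It is additive because multiplication in $KG$ distributes over addition. For $a\in FG\subseteq KG$ and $x\in KG$, associativity gives $\rho_e(ax)=(ax)e=a(xe)=a\rho_e(x)$. In fact $\rho_e$ is even $KG$-linear, and $F$-linearity is the special case $a\in F\cdot 1_G$. Then $\rho_e^2(x)=(xe)e=xe^2=xe=\rho_e(x)$, so $\rho_e$ is an $FG$-linear projector.

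For item (2), the image is $\{xe:x\in KG\}=KGe$ by definition.
\begin{itemize}
\item \emph{Inclusion $KG(1-e)\subseteq\ker\rho_e$.} We have $x(1-e)e=x(e-e^2)=0$.
\item \emph{Inclusion $\ker\rho_e\subseteq KG(1-e)$.} If $xe=0$, then $x=x-xe=x(1-e)$.
\end{itemize}
The decomposition $KG=\operatorname{Im}(\rho_e)\oplus\ker(\rho_e)$ holds for any projector, and both summands are left $FG$-submodules by item (1). This gives $KG=KGe\oplus KG(1-e)$ as left $FG$-modules.

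For item (3), $\rho_e(FG)=\{ae:a\in FG\}=FGe$ by definition. It is an additive subgroup. It is closed under left multiplication by $b\in FG$, since $b(ae)=(ba)e$ with $ba\in FG$. It is therefore a left $FG$-submodule of $KG$, that is, an additive left group code.

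No step presents a real obstacle. The only point needing care is item (3): we claim only that $FGe$ is an additive left group code, not that it is a direct summand of $KG$. Indeed, $\rho_e$ restricted to $FG$ is not a projector on $KG$.
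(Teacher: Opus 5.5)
Your proposal is correct and follows the paper's proof essentially step for step: idempotency gives $\rho_e^2=\rho_e$, associativity gives $FG$-linearity, the two inclusions give $\ker\rho_e=KG(1-e)$, and $FGe=\rho_e(FG)$ is a left $FG$-submodule. You add the explicit check of closure under left multiplication by $FG$ in item (3), which the paper only asserts. You also rightly note that no direct-summand claim is made for $FGe$, consistent with the paper's later open question on this point.
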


\begin{proof} \begin{enumerate}
\item Since $e^2=e$, for every $x\in KG$ we have
$$
\rho_e^2(x)=\rho_e(xe)=xee=xe=\rho_e(x).
$$
Thus $\rho_e$ is a projector. It is also $FG$-linear, because for all $a\in FG$ and $x\in KG$,
$$
\rho_e(ax)=(ax)e=a(xe)=a\rho_e(x).
$$
\item Clearly, $\operatorname{Im}(\rho_e)=KGe$. Now let $x\in KG$. If $x\in KG(1-e)$, say $x=y(1-e)$ for some $y\in KG$, then
$
\rho_e(x)=y(1-e)e=y(e-e^2)=0.
$
Hence $KG(1-e)\subseteq \ker(\rho_e)$. Conversely, if $x\in \ker(\rho_e)$, then $xe=0$, and therefore
$
x=x-xe=x(1-e).
$
Thus $x\in KG(1-e)$, and so $\ker(\rho_e)=KG(1-e)$.

Since $\rho_e$ is a projector, we obtain
$
KG=\operatorname{Im}(\rho_e)\oplus\ker(\rho_e)=KGe\oplus KG(1-e).
$
In particular, Proposition~\ref{proj-summand} shows that $KGe$ is a direct summand of the left $FG$-module $KG$.
\item Finally, $\rho_e(FG)=FGe$ is a left $FG$-submodule of $KG$, and therefore an additive left group code.
\end{enumerate}
\end{proof}

\begin{remark}\label{propiedades-adjunto}
\begin{enumerate}
\item Let $e\in KG$ be an idempotent. Then $e^\ast$ is also an idempotent, since $(e^\ast)^2=(e^2)^\ast=e^\ast$. Consequently, right multiplication by $e^\ast$ defines an $FG$-linear projector $\rho_{e^\ast}:KG\longrightarrow KG$, given by $\rho_{e^\ast}(x)=xe^\ast$.

If $\rho_e^{\ast_{\mathrm E}}$ and $\rho_e^{\ast_{\mathrm{TE}}}$ denote the adjoints of $\rho_e$ with respect to $\langle\cdot,\cdot\rangle_{\mathrm E}$ and $\langle\cdot,\cdot\rangle_{\mathrm{TE}}$, respectively, then $\rho_e^{\ast_{\mathrm E}}=\rho_{e^\ast}$ and $\rho_e^{\ast_{\mathrm{TE}}}=\rho_{e^\ast}$. In particular, $\rho_e^{\ast_{\mathrm E}}=\rho_e^{\ast_{\mathrm{TE}}}$.

If $m$ is even and $\rho_e^{\ast_{\mathrm H}}$ and $\rho_e^{\ast_{\mathrm{TH}}}$ denote the adjoints of $\rho_e$ with respect to $\langle\cdot,\cdot\rangle_{\mathrm H}$ and $\langle\cdot,\cdot\rangle_{\mathrm{TH}}$, respectively, then $\rho_e^{\ast_{\mathrm H}}=\rho_{\overline e^\ast}$ and $\rho_e^{\ast_{\mathrm{TH}}}=\rho_{\overline e^\ast}$. In particular, $\rho_e^{\ast_{\mathrm H}}=\rho_e^{\ast_{\mathrm{TH}}}$.

\item Using Proposition~\ref{idempotent-projector} and  Lemma~\ref{lem:adjoint-basic}, a direct computation for the Euclidean and trace-Euclidean cases yields:
\begin{enumerate}
    \item $\operatorname{Im}(\rho_e) = \ker\big(\rho_e^{\ast_{\mathrm E}}\big)^{\perp_{\mathrm E}} = KG\,e.$
    \item $\ker(\rho_e) = \operatorname{Im}\big(\rho_e^{\ast_{\mathrm E}}\big)^{\perp_{\mathrm E}} = KG\,(1 - e).$
    \item $\operatorname{Im}\big(\rho_e^{\ast_{\mathrm E}}\big) = \ker(\rho_e)^{\perp_{\mathrm E}} = KG\,e^{\ast}.$
    \item $\ker\big(\rho_e^{\ast_{\mathrm E}}\big) = \operatorname{Im}(\rho_e)^{\perp_{\mathrm E}} = KG\,(1 - e^{\ast}).$
\end{enumerate}

Analogous identities hold for the Hermitian and trace-Hermitian forms.
\end{enumerate}
\end{remark}

\begin{definition}\label{def:restricted-idempotent}
Let $e\in KG$ be an idempotent, and let
$\rho_e:KG\longrightarrow KG$ be the map defined by $\rho_e(x)=xe$. The
code $\operatorname{Im}(\rho_e)=KGe$ is the classical
\textit{idempotent group code} associated with $e$. The additive left group
code $\rho_e(FG)=FGe$ is called the \textit{restricted idempotent additive
left group code} associated with $e$, or simply, by a slight abuse of
terminology, the \textit{idempotent additive left group code} associated with
$e$.
\end{definition}

\begin{example}\label{ex:restricted-idempotent-basic}
Let $F=\mathbb{F}_2$, let $K=\mathbb{F}_8$, and let
$G=C_3=\langle g\rangle$ with $g^3=1$. Consider the idempotent
$e=e^2=1+g+g^2\in FG\subseteq KG$. Every element of $FG$ has the form $a+bg+cg^2$, where
$a,b,c\in\mathbb{F}_2$, and multiplication by $e$ gives
$(a+bg+cg^2)e=(a+b+c)(1+g+g^2)$. Hence $FGe=\{0,e\}$. Therefore, $FGe$ is a
restricted idempotent additive left group code with parameters $(3,2,3)$ over
$\mathbb{F}_8$. Moreover, $FGe$ is $\mathrm{TE}$-LCD. Since $e$ is its only nonzero
codeword, it suffices to compute its trace-Euclidean self-inner product. The
coordinates of $e$ are $(1,1,1)$, and therefore
$\langle e,e\rangle_{\mathrm{TE}}=\operatorname{Tr}_{K/F}(1+1+1)
=\operatorname{Tr}_{K/F}(1)=1\neq 0$, since $[K:F]=3$. Hence
$FGe\cap(FGe)^{\perp_{\mathrm{TE}}}=\{0\}$. The dual
$(FGe)^{\perp_{\mathrm{TE}}}$ has parameters $(3,2^8,1)$ and is not a
restricted idempotent additive left group code. Indeed, every code of the
form $FGf$, where $f\in KG$ is an idempotent, has $F$-dimension at most
$\dim_F(FG)=3$, whereas
$\dim_F((FGe)^{\perp_{\mathrm{TE}}})=8$.
\end{example}

\begin{remark}\label{rem:projector-vs-restricted-idempotent}
Unlike the classical $K$-linear setting, restricted idempotent additive group
codes and projector additive group codes do not, in general, coincide.
Recall that, in the classical setting, the codes arising as images of
$KG$-linear projectors are precisely the idempotent group codes. In the
additive setting, a restricted idempotent additive group code has the form
$FGe$ for an idempotent $e\in KG$, whereas a projector additive group code is
the image of an arbitrary $FG$-linear projector on $KG$.

Assume that $FG$ is semisimple. Then, by
Corollary~\ref{cor:semisimple-projector}, every additive left group code in
$KG$ is a projector additive left group code. In particular, every restricted
idempotent additive group code $FGe$ is a projector additive left group code.
Thus, there exists an $FG$-linear projector $P:KG\longrightarrow KG$ such
that $\operatorname{Im}(P)=FGe$.

The projector $P$ need not be the map
$\rho_e:KG\longrightarrow KG$ given by $\rho_e(x)=xe$. Indeed,
$\operatorname{Im}(\rho_e)=KGe$, whereas the restricted idempotent additive
group code is $FGe=\rho_e(FG)$. The projector $P$ with image $FGe$ is instead
obtained by choosing a left $FG$-submodule $D\leq KG$ such that
$KG=FGe\oplus D$ and taking the projection onto $FGe$ along $D$.

Let $\mathcal{I}_{\mathrm{res}}$ denote the family of restricted idempotent
additive group codes, let $\mathcal{P}$ denote the family of projector
additive group codes, and let $\mathcal{A}$ denote the family of additive
left group codes in $KG$. Then, in the semisimple case,
$\mathcal{I}_{\mathrm{res}}\subseteq\mathcal{P}=\mathcal{A}$. The first
inclusion can be proper, as illustrated by
Example~\ref{ex:semisimple-projector}.

If $FG$ is not semisimple, then not every additive left group code is a
projector additive left group code. Moreover, a projector additive group code
need not be of the form $FGe$ for any idempotent $e\in KG$.

Examples~\ref{ex:semisimple-projector} and
\ref{ex:nonsemisimple-projector} illustrate the semisimple and
non-semisimple cases, respectively. Moreover, they show that projector additive group
codes can have parameters that are not attained by restricted idempotent
additive group codes. Thus, the projector construction provides not only a
flexible algebraic framework for additive group codes, but also a source of
genuinely additive codes with improved coding-theoretic parameters.
\end{remark}

\begin{example}\label{ex:semisimple-projector}
Let $F=\mathbb{F}_2$, let
$K=\mathbb{F}_4=\mathbb{F}_2(\omega)$, where
$\omega^2+\omega+1=0$, and let
$G=C_7=\langle g\mid g^7=1\rangle$. Since
$\operatorname{char}(F)=2\nmid 7=|G|$, the algebra $FG$ is semisimple.
Hence, by Corollary~\ref{cor:semisimple-projector}, every additive left group
code in $KG$ is a projector additive left group code.

Consider the elements
$b_1=1+(\omega+1)g^2+g^3+(\omega+1)g^4+\omega g^5+\omega g^6$,
$b_2=\omega+\omega g+g^2+(\omega+1)g^4+g^5+(\omega+1)g^6$, and
$b_3=(\omega+1)g+g^2+(\omega+1)g^3+\omega g^4+\omega g^5+g^6$.
Define $C=\langle b_1,b_2,b_3\rangle_{\mathbb{F}_2}\subseteq KG$.

A direct computation gives $gb_1=b_2+b_3$, $gb_2=b_1+b_2$, and $gb_3=b_1$.
Therefore $C$ is a left $FG$-submodule of $KG$, and hence, by
Corollary~\ref{cor:semisimple-projector}, $C$ is a projector additive left
group code. The elements $b_1,b_2,b_3$ are linearly independent
over $\mathbb{F}_2$, so $\dim_{\mathbb{F}_2}(C)=3$ and $|C|=2^3$. The
Hamming weight enumerator of $C$ is $1+7z^6$. Thus, $C$ has parameters
$(7,2^3,6)$ over $\mathbb{F}_4$.
The code $C$ attains the integral Singleton bound, since
$6=7-\left\lceil 3/2\right\rceil+1$. Hence, $C$ is Singleton-optimal and,
since $2\nmid 3$, it is a QMDS code rather than an MDS code in the strict
sense. On the other hand, a direct computation shows that $KC_7$ has eight
idempotents and that, among the restricted idempotent additive group codes
$FGe$ of $\mathbb{F}_2$-dimension $3$, there are precisely two distinct
codes, both having minimum distance $4$. Consequently,
$d(C)=6>\max\{d(FGe):e^2=e,\ \dim_{\mathbb{F}_2}(FGe)=3\}=4$. In particular,
$C$ is not a restricted idempotent additive group code.
\end{example}

\begin{example}\label{ex:nonsemisimple-projector}
Let $F=\mathbb{F}_2$, let
$K=\mathbb{F}_4=\mathbb{F}_2(\omega)$, where
$\omega^2+\omega+1=0$, and let
$G=C_4=\langle g\mid g^4=1\rangle$. Since
$\operatorname{char}(F)=2$ divides $|G|=4$, the group algebra
$FG=\mathbb{F}_2C_4$ is not semisimple. Set $R=FG$ and let
$u=1+g+g^2\in R$.

Consider the additive left group code
$C=R(1+\omega u)\subseteq KG$. Since $\omega$ is central in $KG$, one has
$x(1+\omega u)=x+\omega xu$ for every $x\in R$. Consequently,
$C=\{x+\omega xu:x\in R\}$.

Since $K=F\oplus\omega F$ as $F$-vector spaces, one has
$KG=R\oplus\omega R$ as left $R$-modules. Hence every element of $KG$ has a
unique expression $x+\omega y$, where $x,y\in R$. Define
$P:KG\longrightarrow KG$ by $P(x+\omega y)=x+\omega xu$. Let $a\in R$.
Then $P(a(x+\omega y))=P(ax+\omega ay)=ax+\omega(ax)u=aP(x+\omega y)$.
Hence $P$ is $R$-linear, equivalently, $FG$-linear. Moreover,
$P^2(x+\omega y)=P(x+\omega xu)=x+\omega xu=P(x+\omega y)$. Therefore,
$P^2=P$. Finally,
$\operatorname{Im}(P)=\{x+\omega xu:x\in R\}=C$. Thus $C$ is a projector
additive left group code.

Consider the $F$-linear map $\varphi:R\longrightarrow C$ defined by
$\varphi(x)=x+\omega xu$. The map $\varphi$ is surjective by the definition
of $C$. If $\varphi(x)=0$, then $x+\omega xu=0$. Since
$KG=R\oplus\omega R$, this implies $x=0$. Hence $\varphi$ is an
isomorphism. Therefore,
$\dim_{\mathbb{F}_2}(C)=\dim_{\mathbb{F}_2}(R)=4$, and $|C|=2^4$.

A direct computation gives the Hamming weight enumerator of $C$ as
$W_C(z)=1+12z^3+3z^4$. Hence $d(C)=3$, and $C$ has parameters
$(4,2^4,3)$ as an additive code over $\mathbb{F}_4$.

We claim that $C$ is not a restricted idempotent additive group code. Since
$KC_4\cong K[x]/(x^4-1)$ and the characteristic of $K$ is $2$, one has
$x^4-1=(x+1)^4$. Hence $KC_4\cong K[x]/(x+1)^4$ is a local ring. Its only
idempotents are $0$ and $1$. Thus, the only restricted idempotent additive
group codes of the form $FGe$, where $e\in KG$ is an idempotent, are
$FG0=\{0\}$ and $FG1=FG$. The nonzero code $FG$ has parameters
$(4,2^4,1)$, since it contains the word $1$, which has Hamming weight $1$.
Consequently, no restricted idempotent additive group code has parameters
$(4,2^4,3)$.

We also claim that $C$ is not a classical idempotent group code of the form
$KGe$. Every code of the form $KGe$ is $K$-linear. Let
$c=1+\omega u\in C$. Since $\omega^2=\omega+1$, one has
$\omega c=\omega+\omega^2u=u+\omega(1+u)$. Suppose that $\omega c\in C$.
Then there exists $x\in R$ such that
$\omega c=x+\omega xu$. Comparing the $R$-components gives $x=u$, and
comparison of the $\omega R$-components gives $u^2=1+u$. However,
$u^2=(1+g+g^2)^2=1+g^2+g^4=g^2$, whereas $1+u=g+g^2$. This is a
contradiction. Hence $\omega c\notin C$, so $C$ is not $K$-linear and cannot
be of the form $KGe$.

Finally, $C$ attains the Singleton bound for additive codes. Indeed,
$\dim_{\mathbb{F}_2}(C)=4$, $[K:F]=2$, and $d(C)=3$, so
$d(C)=4-\left\lceil\frac{4}{2}\right\rceil+1$. Equivalently,
$|C|=2^4=4^{4-3+1}$. Hence $C$ is an additive MDS code. This example shows
that, in the non-semisimple case, the projector construction yields an
additive MDS group code with parameters unattainable by both the restricted
idempotent construction $FGe$ and the classical idempotent construction
$KGe$.
\end{example}

In light of Proposition~\ref{proj-summand}, it is natural to ask the following question.

\begin{question}\label{question-1}
Let $C\leq KG$ be an additive left group code, and let
$\star\in\{\mathrm{TE},\mathrm{TH}\}$, where the trace-Hermitian case is
considered when $m$ is even. How can the $\star$-LCD property of $C$ be
characterized in terms of an $FG$-linear projector
$P\in\operatorname{End}_{FG}(KG)$ satisfying
$\operatorname{Im}(P)=C$ and $\ker(P)=C^{\perp_\star}$? Moreover, if
$C=\operatorname{Im}(P)$ for an $FG$-linear projector $P$, under what
conditions on $P$ is $C$ $\star$-self-dual?
\end{question}

In the particular situation of
Proposition~\ref{idempotent-projector}(2), where $C=KGe$, the preceding
question belongs to the classical $K$-linear setting. Since $C$ is a left
ideal of $KG$, it is a $K$-linear code. Hence its Euclidean and
trace-Euclidean duals coincide, and, when $m$ is even, its Hermitian and
trace-Hermitian duals also coincide. Moreover, the full image of the
projector $\rho_e$ is $KGe$, so the duality properties of this code can be
studied through the usual ideal-theoretic methods.

The additive situation is more subtle. Although an idempotent $e\in KG$
defines the projector $\rho_e$, the restricted idempotent additive group code
is $FGe=\rho_e(FG)$, rather than the full image
$\operatorname{Im}(\rho_e)=KGe$. In general, $FGe$ need not be $K$-linear,
and its trace duality properties are not determined solely by the
corresponding duality properties of the classical code $KGe$. Thus, the
relevant problem is to determine the LCD and self-dual behavior of the
restricted image itself.

\begin{question}\label{question-2}
Let $e\in KG$ be an idempotent. Under what conditions is the restricted
idempotent additive left group code $FGe=\rho_e(FG)$ $\star$-LCD or
$\star$-self-dual, where $\star\in\{\mathrm{TE},\mathrm{TH}\}$ and the
trace-Hermitian case is considered when $m$ is even?
\end{question}

\subsection{LCD projector additive group codes}

The aim of this section is to answer the two questions stated above in the $\star$-LCD case. We first characterize $\star$-LCD additive left group codes in terms of self-adjoint $FG$-linear projectors. We subsequently characterize when  the idempotent additive left group code $\rho_e(FG)=FGe$ is $\star$-LCD.

We begin with a general criterion for the image of an $FG$-linear endomorphism to be $\star$-LCD.

\begin{lemma}\label{lem:image-projector-dual-kernel}
Let $T\in \operatorname{End}_{FG}(KG)$, and let $T^\ast$ denote its adjoint with respect to $\langle\cdot,\cdot\rangle_\star$. Then
$\operatorname{Im}(T)^{\perp_\star}=\ker(T^\ast).$
In particular, $\operatorname{Im}(T)$ is a $\star$-LCD additive left group code if and only if
$\operatorname{Im}(T)\cap \ker(T^\ast)=\{0\}.$
\end{lemma}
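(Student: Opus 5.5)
The first identity is exactly Lemma~\ref{lem:adjoint-basic}(3), applied to the non-degenerate $F$-bilinear form $\langle\cdot,\cdot\rangle_\star$ with $\star\in\{\mathrm{TE},\mathrm{TH}\}$. Both forms are non-degenerate symmetric $F$-bilinear forms on $KG$; the trace-Hermitian form is symmetric because the involution of $K$ fixes $F$. Hence the adjoint $T^\ast$ exists and is unique, and by Lemma~\ref{lem:adjoint-FG-linear} it lies in $\operatorname{End}_{FG}(KG)$. For a self-contained argument I would unwind the definitions directly. An element $y$ lies in $\operatorname{Im}(T)^{\perp_\star}$ if and only if $\langle Tx,y\rangle_\star=0$ for all $x\in KG$. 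This is equivalent to $\langle x,T^\ast y\rangle_\star=0$ for all $x$, and by non-degeneracy this holds if and only if $T^\ast y=0$.

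One small point deserves attention. The paper defines $W^{\perp_\star}$ using $\langle v,w\rangle=0$ with the code element in the second slot. The computation above instead puts the element of $\operatorname{Im}(T)$ in the first slot. Because both forms are symmetric, and in particular reflexive, the two conventions give the same orthogonal, so the identity holds as stated.

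For the ``in particular'' part, I would first note that $\operatorname{Im}(T)$ is a left $FG$-submodule of $KG$ because $T$ is $FG$-linear, so it is an additive left group code. By the definition of $\star$-LCD, $\operatorname{Im}(T)$ is $\star$-LCD exactly when $\operatorname{Im}(T)\cap\operatorname{Im}(T)^{\perp_\star}=\{0\}$. Substituting $\operatorname{Im}(T)^{\perp_\star}=\ker(T^\ast)$ then gives the stated criterion.

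There is no real obstacle here. The only point requiring care is the symmetry and reflexivity check that lets the slot convention in the definition of the orthogonal match the adjoint identity.
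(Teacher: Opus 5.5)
Your proposal is correct and follows the paper's proof essentially verbatim. You unwind $y\in\operatorname{Im}(T)^{\perp_\star}$ through the adjoint identity and non-degeneracy, then substitute $\operatorname{Im}(T)^{\perp_\star}=\ker(T^\ast)$ into the definition of $\star$-LCD; your extra remark that symmetry of the trace forms reconciles the slot convention in the definition of $W^{\perp_\star}$ is a point the paper passes over silently, and you handle it correctly.
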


\begin{proof}
Let $y\in KG$. Then
$y\in \operatorname{Im}(T)^{\perp_\star}$
if and only if
$\langle T(x),y\rangle_\star=0$
for all $x\in KG.$
By the definition of adjoint, this is equivalent to
$\langle x,T^\ast(y)\rangle_\star=0$
for all $x\in KG.$
Since the form is non-degenerate, this holds if and only if $T^\ast(y)=0$. Hence
$\operatorname{Im}(T)^{\perp_\star}=\ker(T^\ast).$
The final assertion follows immediately.
\end{proof}

The following theorem gives a complete answer to Question~\ref{question-1} in the LCD case.

\begin{theorem}\label{thm:LCD-projector-characterization}
Let $C\leq KG$ be an additive left group code. Then the following hold:
\begin{enumerate}
    \item $C$ is TE-LCD if and only if there exists an $FG$-linear projector $P\in \operatorname{End}_{FG}(KG)$ such that $\operatorname{Im}(P)=C$ and $P$ is self-adjoint with respect to $\langle\cdot,\cdot\rangle_{\mathrm{TE}}$.

    \item If $m$ is even, then $C$ is TH-LCD if and only if there exists an $FG$-linear projector $P\in \operatorname{End}_{FG}(KG)$ such that $\operatorname{Im}(P)=C$ and $P$ is self-adjoint with respect to $\langle\cdot,\cdot\rangle_{\mathrm{TH}}$.
\end{enumerate}
\end{theorem}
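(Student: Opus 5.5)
We treat both parts simultaneously, writing $\star\in\{\mathrm{TE},\mathrm{TH}\}$ and $P^\ast$ for the adjoint with respect to $\langle\cdot,\cdot\rangle_\star$. Both forms are non-degenerate symmetric $F$-bilinear forms on $KG$, so Lemma~\ref{lem:adjoint-FG-linear}, Lemma~\ref{lem:adjoint-basic} and Lemma~\ref{lem:image-projector-dual-kernel} all apply verbatim. The argument therefore reduces to a single abstract statement about orthogonal projections.

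For the forward direction, assume $C$ is $\star$-LCD. By the remark following the definition of LCD codes, non-degeneracy gives $KG=C\oplus C^{\perp_\star}$ as left $FG$-modules; the orthogonal $C^{\perp_\star}$ is an $FG$-submodule by the earlier proposition on duals. As in the proof of Proposition~\ref{proj-summand}, we let $P$ be the projection onto $C$ along $C^{\perp_\star}$. Then $P$ is an $FG$-linear projector with $\operatorname{Im}(P)=C$ and $\ker(P)=C^{\perp_\star}$. To see that $P$ is self-adjoint, we write $x=c+c'$ and $y=d+d'$ with $c,d\in C$ and $c',d'\in C^{\perp_\star}$. Then
$$\langle Px,y\rangle_\star=\langle c,d\rangle_\star=\langle x,Py\rangle_\star,$$
where the cross terms vanish because $C^{\perp_\star}$ is orthogonal to $C$ on both sides. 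This uses reflexivity (indeed symmetry) of the form. By uniqueness of the adjoint, $P^\ast=P$.

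For the converse, let $P$ be an $FG$-linear projector with $P=P^\ast$ and $\operatorname{Im}(P)=C$. Lemma~\ref{lem:image-projector-dual-kernel} gives $C^{\perp_\star}=\ker(P^\ast)=\ker(P)$. Since $P$ is a projector, $\operatorname{Im}(P)\cap\ker(P)=\{0\}$: if $x=Py$ and $Px=0$, then $x=P^2y=Px=0$. Hence $C\cap C^{\perp_\star}=\{0\}$, so $C$ is $\star$-LCD.

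No serious obstacle is expected. The only point needing care is the self-adjointness computation in the forward direction, which needs the form to be symmetric so that $\langle c',d\rangle_\star=0$ as well as $\langle c,d'\rangle_\star=0$. For TH this holds because, as noted in the earlier remark, $\langle\cdot,\cdot\rangle_{\mathrm{TH}}$ is a symmetric $F$-bilinear form. One can check this directly: since the involution commutes with $\operatorname{Tr}_{K/F}$, we have $\operatorname{Tr}_{K/F}(\overline{z})=\operatorname{Tr}_{K/F}(z)$, which gives $\operatorname{Tr}_{K/F}\bigl(\sum_g b_g\overline{a_g}\bigr)=\operatorname{Tr}_{K/F}\bigl(\sum_g a_g\overline{b_g}\bigr)$.
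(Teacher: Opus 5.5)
Your proposal is correct and follows essentially the same argument as the paper. For the forward direction, both take the projection onto $C$ along $C^{\perp_\star}$ and check self-adjointness via the cross-term computation; for the converse, both use $C^{\perp_\star}=\ker(P^\ast)=\ker(P)$ together with $\operatorname{Im}(P)\cap\ker(P)=\{0\}$. Your explicit verification that $\langle\cdot,\cdot\rangle_{\mathrm{TH}}$ is symmetric, which is needed for the vanishing of $\langle c,d'\rangle_\star$, makes precise a point the paper leaves implicit.
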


\begin{proof}
We prove the trace Euclidean case; the trace Hermitian case is identical.

Assume first that $C$ is TE-LCD. Then
$
KG=C\oplus C^{\perp_{\mathrm{TE}}}
$
as a direct sum of $FG$-modules. Let $P:KG\longrightarrow KG$ be the projection onto $C$ parallel to $C^{\perp_{\mathrm{TE}}}$. Then $P\in \operatorname{End}_{FG}(KG)$, $P^2=P$, and $\operatorname{Im}(P)=C$.

To prove that $P$ is self-adjoint, write
$
x=x_1+x_2
$
and
$
y=y_1+y_2,
$
with $x_1,y_1\in C$ and $x_2,y_2\in C^{\perp_{\mathrm{TE}}}$. Then $P(x)=x_1$ and $P(y)=y_1$. Since $C\perp C^{\perp_{\mathrm{TE}}}$, we obtain
$$
\langle P(x),y\rangle_{\mathrm{TE}}
=
\langle x_1,y_1+y_2\rangle_{\mathrm{TE}}
=
\langle x_1,y_1\rangle_{\mathrm{TE}}
=
\langle x_1+x_2,y_1\rangle_{\mathrm{TE}}
=
\langle x,P(y)\rangle_{\mathrm{TE}}.
$$
Thus $P$ is self-adjoint with respect to $\langle\cdot,\cdot\rangle_{\mathrm{TE}}$.

Conversely, assume that there exists an $FG$-linear projector $P\in \operatorname{End}_{FG}(KG)$ such that $\operatorname{Im}(P)=C$ and $P$ is self-adjoint with respect to $\langle\cdot,\cdot\rangle_{\mathrm{TE}}$. Since $P^2=P$, we have
$
KG=\operatorname{Im}(P)\oplus\ker(P).
$
By Lemma~\ref{lem:image-projector-dual-kernel},
$
C^{\perp_{\mathrm{TE}}}=(\operatorname{Im}P)^{\perp_{\mathrm{TE}}}=\ker(P^\ast)=\ker(P),
$
because $P^\ast=P$. Hence
$
KG=C\oplus C^{\perp_{\mathrm{TE}}},
$
and therefore $C$ is TE-LCD.

The trace Hermitian case is proved in exactly the same way.
\end{proof}
The following example illustrates
Theorem~\ref{thm:LCD-projector-characterization}. It exhibits a
trace-Euclidean LCD additive left group code that is the image of both a
self-adjoint and a non-self-adjoint $FG$-linear projector, showing that the
characterization in the theorem is an existence statement.

\begin{example}\label{projector-existencia}
Let $F=\mathbb{F}_2$, let
$K=\mathbb{F}_4=\mathbb{F}_2(\omega)$, where
$\omega^2+\omega+1=0$, and let
$G=C_2=\langle g\mid g^2=1\rangle$. Since
$\operatorname{char}(F)=2$ divides $|G|=2$, the algebra $FG$ is not
semisimple.

Consider the additive left group code $C=FG\omega\subseteq KG$. Since
$\omega$ is central in $KG$, one has $FG\omega=\omega FG$. Moreover,
$K=\mathbb{F}_2\omega\oplus\mathbb{F}_2\omega^2$. Since
$\langle\omega,\omega\rangle_{TE}=
\operatorname{Tr}_{K/F}(\omega^2)=1$ and
$\langle\omega,\omega^2\rangle_{TE}=
\operatorname{Tr}_{K/F}(\omega^3)=\operatorname{Tr}_{K/F}(1)=0$, it follows
that $C^{\perp_{TE}}=FG\omega^2$. Consequently,
$KG=C\oplus C^{\perp_{TE}}$, and hence $C$ is trace-Euclidean LCD. Every element of $KG$ can be written uniquely as $x+\omega y$, where
$x,y\in FG$. Define maps $P_0,P_1:KG\longrightarrow KG$ by
$P_0(x+\omega y)=\omega(x+y)$ and $P_1(x+\omega y)=\omega y$. Since
$\omega$ is central in $KG$, both maps are $FG$-linear. Moreover,
$P_0^2=P_0$, $P_1^2=P_1$, and
$\operatorname{Im}(P_0)=\operatorname{Im}(P_1)=FG\omega=C$. Thus, both
$P_0$ and $P_1$ are $FG$-linear projectors with image $C$. The map $P_0$ is the orthogonal projection onto $C$ along
$C^{\perp_{TE}}=FG\omega^2$. Therefore, $P_0$ is self-adjoint with respect
to the trace-Euclidean form. On the other hand, $P_1$ is not self-adjoint.
Indeed, $P_1(1)=0$ and $P_1(\omega)=\omega$. Hence
$\langle P_1(1),\omega\rangle_{TE}=0$, whereas
$\langle1,P_1(\omega)\rangle_{TE}=
\langle1,\omega\rangle_{TE}=
\operatorname{Tr}_{K/F}(\omega)=1$. Therefore, $P_1^*\neq P_1$. This example shows that the trace-LCD property of $C$ guarantees the
existence of a self-adjoint projector with image $C$, but does not imply that
every projector with image $C$ is self-adjoint.
\end{example}

As an application of Theorem~\ref{thm:LCD-projector-characterization}, we recover the following characterization of left ideals generated by self-adjoint idempotents. Although the full result is already known from \cite{CruzWillems2017,BBW24}, we include the proof as an application of the previous theorem.

\begin{corollary}\label{LCD-idempotent}
Let $C\leq KG$ be a left group code.
\begin{enumerate}
    \item The following are equivalent:
    \begin{enumerate}
        \item $C$ is a Euclidean LCD code.
        \item $C=KGe$ for some idempotent $e\in KG$ such that $e=e^\ast$.
    \end{enumerate}

    \item If $m$ is even, then the following are equivalent:
    \begin{enumerate}
        \item $C$ is a Hermitian LCD code.
        \item $C=KGe$ for some idempotent $e\in KG$ such that $e=\overline{e}^\ast$.
    \end{enumerate}
\end{enumerate}
\end{corollary}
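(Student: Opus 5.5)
We reduce Corollary~\ref{LCD-idempotent} to Theorem~\ref{thm:LCD-projector-characterization}, using the fact that a left ideal is $K$-linear, so its Euclidean and trace-Euclidean duals coincide by Proposition~\ref{proposition:Ksubspace-orthogonals}. We treat part (1); part (2) is the same argument with $\overline{e}^\ast$ in place of $e^\ast$, using the Hermitian versions of Lemma~\ref{identidad} and Remark~\ref{propiedades-adjunto}.

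\emph{(b)$\Rightarrow$(a).} Suppose $C=KGe$ with $e^2=e=e^\ast$.
\begin{enumerate}
\item By Remark~\ref{propiedades-adjunto}, $\rho_e^{\ast_{\mathrm{TE}}}=\rho_{e^\ast}=\rho_e$, so $\rho_e$ is a self-adjoint $FG$-linear projector.
\item Its image is $KGe=C$ by Proposition~\ref{idempotent-projector}.
\item Theorem~\ref{thm:LCD-projector-characterization}(1) then shows that $C$ is TE-LCD.
\item Since $C^{\perp_{\mathrm{E}}}=C^{\perp_{\mathrm{TE}}}$, the code $C$ is also Euclidean LCD.
\end{enumerate}
Alternatively, one can argue directly: Remark~\ref{propiedades-adjunto}(2)(d) gives $C^{\perp_{\mathrm E}}=KG(1-e)=\ker\rho_e$, which meets $\operatorname{Im}\rho_e$ trivially.

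\emph{(a)$\Rightarrow$(b).} Suppose $C$ is Euclidean LCD.
\begin{enumerate}
\item By Proposition~\ref{proposition:Ksubspace-orthogonals}, $C^{\perp_{\mathrm{TE}}}=C^{\perp_{\mathrm E}}$, so $C$ is TE-LCD.
\item Let $P$ be the projection onto $C$ along $C^{\perp_{\mathrm E}}$. The proof of Theorem~\ref{thm:LCD-projector-characterization} shows that $P$ is $FG$-linear and self-adjoint for $\langle\cdot,\cdot\rangle_{\mathrm{TE}}$.
\item Both $C$ and $C^{\perp_{\mathrm E}}$ are left ideals. For $C^{\perp_{\mathrm E}}$, this holds because it is a $K$-subspace and Lemma~\ref{identidad} gives $\langle ax,c\rangle_{\mathrm E}=\langle x,a^\ast c\rangle_{\mathrm E}$ for $a\in KG$. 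Hence $P$ is a projection along a $KG$-module decomposition, so $P\in\operatorname{End}_{KG}(KG)$.
\item By the discussion opening Section~3, $P=\rho_e$ with $e=P(1)$ idempotent and $C=KGe$.
\item Since $P$ is $K$-linear, Remark~\ref{propiedades-adjunto} gives $\rho_e=P=P^{\ast_{\mathrm{TE}}}=\rho_{e^\ast}$. Evaluating at $1$ yields $e=e^\ast$.
\end{enumerate}

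The main obstacle is the step in (a)$\Rightarrow$(b) where the self-adjoint projector supplied by Theorem~\ref{thm:LCD-projector-characterization} is upgraded from $FG$-linear to $KG$-linear. The theorem alone provides only an $FG$-linear projector; the upgrade requires choosing the specific orthogonal projection and noting that both summands are $KG$-submodules. Once $P$ is $KG$-linear, the identification with $\rho_e$ and the identity $e=e^\ast$ are routine. In the Hermitian case we analogously use $C^{\perp_{\mathrm H}}=C^{\perp_{\mathrm{TH}}}$ and $\rho_e^{\ast_{\mathrm{TH}}}=\rho_{\overline e^\ast}$, which give $e=\overline e^\ast$.
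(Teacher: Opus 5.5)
Your proposal is correct and follows the paper's proof essentially step by step. You reduce the Euclidean case to the trace-Euclidean one via Proposition~\ref{proposition:Ksubspace-orthogonals}, take the projection onto $C$ along the left ideal $C^{\perp_{\mathrm E}}$ (so it is $KG$-linear and equals $\rho_e$ with $e=P(1)$), and for the converse use that $\rho_e$ is trace-self-adjoint; the only cosmetic difference is that you get $e=e^\ast$ by citing $\rho_e^{\ast_{\mathrm{TE}}}=\rho_{e^\ast}$ from Remark~\ref{propiedades-adjunto} and evaluating at $1$, whereas the paper redoes that computation directly from Lemma~\ref{identidad} and non-degeneracy.
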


\begin{proof}
We prove the Euclidean case; the Hermitian case is analogous.

Assume first that $C$ is a Euclidean LCD code. By Proposition~\ref{proposition:Ksubspace-orthogonals},
$
C^{\perp_{\mathrm{E}}}=C^{\perp_{\mathrm{TE}}},
$
so $C$ is also TE-LCD. Hence
$
KG=C\oplus C^{\perp_{\mathrm{E}}}
$
as a direct sum of left $KG$-submodules, that is, as a direct sum of left ideals. Let
$
P:KG\longrightarrow KG
$
be the projection onto $C$ parallel to $C^{\perp_{\mathrm{E}}}$. Then $P$ is a left $KG$-module endomorphism with $\operatorname{Im}(P)=C$. By the proof of Theorem~\ref{thm:LCD-projector-characterization}, $P$ is self-adjoint with respect to $\langle\cdot,\cdot\rangle_{\mathrm{TE}}$. By Remark~\ref{adjunto-E-TE}, since $P$ is $K$-linear, its adjoint with respect to $\langle\cdot,\cdot\rangle_{\mathrm{TE}}$ coincides with its adjoint with respect to $\langle\cdot,\cdot\rangle_{\mathrm{E}}$.

Set
$
e=P(1).
$
Since $P$ is $KG$-linear, we have
$
P(x)=xP(1)=xe
$
for all $x\in KG$. As $P^2=P$, it follows that
$
e^2=e,
$
so $e$ is an idempotent. Moreover,
$
C=\operatorname{Im}(P)=KGe.
$

We now show that $e=e^\ast$. Let $x,y\in KG$. Since $P$ is self-adjoint,
$$
\langle xe,y\rangle_{\mathrm{TE}}
=
\langle P(x),y\rangle_{\mathrm{TE}}
=
\langle x,P(y)\rangle_{\mathrm{TE}}
=
\langle x,ye\rangle_{\mathrm{TE}}.
$$
On the other hand, by Lemma~\ref{identidad},
$
\langle xe,y\rangle_{\mathrm{TE}}=\langle x,e^\ast y\rangle_{\mathrm{TE}}.
$
Hence
$
\langle x,e^\ast y\rangle_{\mathrm{TE}}=\langle x,ey\rangle_{\mathrm{TE}}
$
for all $x,y\in KG$. Since $\langle\cdot,\cdot\rangle_{\mathrm{TE}}$ is non-degenerate, it follows that
$
e^\ast y=ey
$
for all $y\in KG$, and therefore $e^\ast=e$.

Conversely, assume that
$
C=KGe
$
for some idempotent $e\in KG$ such that $e=e^\ast$. Consider the map
$
\rho_e:KG\longrightarrow KG$, \;$x\mapsto xe.
$
Then $\rho_e$ is an $FG$-linear projector with $\operatorname{Im}(\rho_e)=C$. Moreover, for all $x,y\in KG$,
$$
\langle \rho_e(x),y\rangle_{\mathrm{TE}}
=
\langle xe,y\rangle_{\mathrm{TE}}
=
\langle x,e^\ast y\rangle_{\mathrm{TE}}
=
\langle x,ey\rangle_{\mathrm{TE}}
=
\langle x,\rho_e(y)\rangle_{\mathrm{TE}},
$$
so $\rho_e$ is self-adjoint with respect to $\langle\cdot,\cdot\rangle_{\mathrm{TE}}$. Hence, by Theorem~\ref{thm:LCD-projector-characterization}, $C$ is TE-LCD. By Proposition~\ref{proposition:Ksubspace-orthogonals}, it follows that $C$ is a Euclidean LCD code.

If $m$ is even, the Hermitian case is proved in the same way, using Proposition~\ref{proposition:Ksubspace-orthogonals}, the TH-version of Theorem~\ref{thm:LCD-projector-characterization}, Remark~\ref{adjunto-E-TE}, and the identity
$
\langle xe,y\rangle_{\mathrm{TH}}=\langle x,\overline{e}^\ast y\rangle_{\mathrm{TH}}.
$
\end{proof}

It is natural to ask whether every projector appearing in Theorem~\ref{thm:LCD-projector-characterization} arises from an idempotent, as in Corollary~\ref{LCD-idempotent}. The following lemma shows that this need not be the case.

First, recall that the corresponding form on $K$ is defined by
$
\langle x,y\rangle_{\mathrm{TE}}=\operatorname{Tr}_{K/F}(xy)
$
in the trace-Euclidean case, and by
$
\langle x,y\rangle_{\mathrm{TH}}=\operatorname{Tr}_{K/F}(x\overline{y})
$
in the trace-Hermitian case.

\begin{lemma}
Let $G$ be a finite group, and suppose that there exists a nonzero proper $F$-subspace $U$ of $K$ such that $K=U\oplus U^{\perp_\star}$, where $\star\in \{\mathrm{TE},\mathrm{TH}\}$. Let $\pi_U:K\longrightarrow U$ be the $F$-linear map determined by $\pi_U(u+w)=u$ for all $u\in U$ and $w\in U^{\perp_\star}$, and define
$$
P\Bigl(\sum_{g\in G}\lambda_g g\Bigr)=\sum_{g\in G}\pi_U(\lambda_g)\,g.
$$
Then $P$ is a self-adjoint projector in $\operatorname{End}_{FG}(KG)$, $C=\operatorname{Im}(P)=UG$ is a $\star$-LCD additive left group code, and $P$ is not induced by right multiplication by any element of $KG$.
\end{lemma}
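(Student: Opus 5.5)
The plan is to check the assertions in the order they are stated: first the algebraic properties of $P$, then the LCD property of its image via Theorem~\ref{thm:LCD-projector-characterization}, and finally the claim that $P$ is not a right multiplication.

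\textbf{Step 1: $P$ is an $FG$-linear projector.} Additivity of $P$ is immediate, since $\pi_U$ is $F$-linear and $P$ acts coordinatewise. Take $a=\sum_h a_h h\in FG$ with $a_h\in F$ and $x=\sum_g\lambda_g g\in KG$. The coefficient of $k$ in $ax$ is $\sum_h a_h\lambda_{h^{-1}k}$. Because each $a_h$ lies in $F$ and $\pi_U$ is $F$-linear, we get
\[
\pi_U\Bigl(\sum_h a_h\lambda_{h^{-1}k}\Bigr)=\sum_h a_h\pi_U(\lambda_{h^{-1}k}).
\]
This says exactly that $P(ax)=aP(x)$. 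Since $\pi_U^2=\pi_U$, applying it coordinatewise gives $P^2=P$. The image is $\{\sum_g u_g g: u_g\in U\}=UG$.

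\textbf{Step 2: $P$ is self-adjoint.} I would first prove this on $K$: for $\lambda,\mu\in K$, $\langle\pi_U\lambda,\mu\rangle_\star=\langle\lambda,\pi_U\mu\rangle_\star$. The argument is the same decomposition used in the proof of Theorem~\ref{thm:LCD-projector-characterization}. Write $\lambda=u+w$ and $\mu=u'+w'$ with $u,u'\in U$ and $w,w'\in U^{\perp_\star}$. Then both sides equal $\langle u,u'\rangle_\star$.

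In the TH case this requires reflexivity, namely that $\langle w,u'\rangle_\star=0$ whenever $w\in U^{\perp_\star}$ and $u'\in U$. This holds because $\operatorname{Tr}_{K/F}(\overline{z})=\operatorname{Tr}_{K/F}(z)$, so the form on $K$ is symmetric. Both forms on $KG$ are sums over $g$ of the forms on $K$ applied to coefficients, so self-adjointness passes coordinatewise to $P$. Theorem~\ref{thm:LCD-projector-characterization} then gives that $C=\operatorname{Im}(P)$ is $\star$-LCD.

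\textbf{Step 3: $P$ is not a right multiplication.} Suppose $P(x)=xc$ for all $x\in KG$ and some $c\in KG$. Then $c=P(1)=\sum_g\pi_U(\delta_{g,1})g=\pi_U(1)\cdot 1_G$, a scalar $\pi_U(1)\in U$. Consequently $P(\lambda\cdot 1_G)=\lambda\,\pi_U(1)$ for all $\lambda\in K$, so $\pi_U(\lambda)=\lambda\,\pi_U(1)$, and $\pi_U$ would be $K$-linear.

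A $K$-linear idempotent endomorphism of the one-dimensional $K$-space $K$ is either $0$ or the identity. Its image would then be $\{0\}$ or $K$, contradicting that $U$ is a nonzero proper subspace.

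I expect no step to be hard. The only delicate point is the reflexivity and symmetry of the TH form on $K$ in Step 2, which the trace identity settles.
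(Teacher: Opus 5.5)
Your proof is correct and follows essentially the same route as the paper: coordinatewise $FG$-linearity and idempotence, self-adjointness from the orthogonal splitting $K=U\oplus U^{\perp_\star}$ lifted coefficientwise (the paper phrases this as the orthogonal decomposition $KG=UG\oplus U^{\perp_\star}G$), the LCD conclusion via Theorem~\ref{thm:LCD-projector-characterization}, and the failure of $K$-linearity of $\pi_U$ to rule out right multiplication. Your explicit check that the trace-Hermitian form on $K$ is symmetric makes precise a point the paper leaves implicit; strictly speaking, the relation that needs symmetry is $\langle u,w'\rangle_\star=0$, not $\langle w,u'\rangle_\star=0$ (which holds by the definition of $U^{\perp_\star}$), but symmetry covers both.
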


\begin{proof}
Since $K=U\oplus U^{\perp_\star}$, the map $\pi_U:K\longrightarrow U$ is a well-defined $F$-linear projection with kernel $U^{\perp_\star}$. Hence the map
$$
P\Bigl(\sum_{g\in G}\lambda_g g\Bigr)=\sum_{g\in G}\pi_U(\lambda_g)\,g
$$
is $F$-linear. Moreover, for every $h\in G$ and $x=\sum_{g\in G}\lambda_g g\in KG$, we have
$$
P(hx)=P\Bigl(\sum_{g\in G}\lambda_g(hg)\Bigr)=\sum_{g\in G}\pi_U(\lambda_g)(hg)=h\sum_{g\in G}\pi_U(\lambda_g)g=hP(x).
$$
Thus $P$ is $FG$-linear, so $P\in \operatorname{End}_{FG}(KG)$.

Since $\pi_U^2=\pi_U$, it follows immediately that $P^2=P$, so $P$ is a projector.

We now determine the image and kernel of $P$. For any $x=\sum_{g\in G}\lambda_g g\in KG$, each coefficient of $P(x)$ lies in $U$, so $C=\operatorname{Im}(P)\subseteq UG$. Conversely, if $y=\sum_{g\in G}u_g g\in UG$ with $u_g\in U$ for all $g\in G$, then $P(y)=\sum_{g\in G}\pi_U(u_g)g=\sum_{g\in G}u_g g=y$, since $\pi_U$ restricts to the identity on $U$. Hence $C=\operatorname{Im}(P)=UG$.

Likewise,
$$
P\Bigl(\sum_{g\in G}\lambda_g g\Bigr)=0
\iff
\pi_U(\lambda_g)=0 \text{ for all } g\in G
\iff
\lambda_g\in U^{\perp_\star} \text{ for all } g\in G,
$$
and therefore $\ker(P)=U^{\perp_\star}G$. Hence $KG=UG\oplus U^{\perp_\star}G$.

Now let $x=\sum_{g\in G}u_g g\in UG$ and $y=\sum_{g\in G}w_g g\in U^{\perp_\star}G$, with $u_g\in U$ and $w_g\in U^{\perp_\star}$ for all $g\in G$. Since the form on $KG$ is defined coefficientwise from $\langle\cdot,\cdot\rangle_\star$ on $K$, we obtain
$$
\langle x,y\rangle_\star=\sum_{g\in G}\langle u_g,w_g\rangle_\star=0,
$$
because $U^{\perp_\star}$ is orthogonal to $U$. Thus $UG\perp_\star U^{\perp_\star}G$, so the above decomposition of $KG$ is orthogonal.

Since $P$ is the projection onto $UG$ along $U^{\perp_\star}G$, it follows from this orthogonal decomposition that $P$ is self-adjoint with respect to $\langle\cdot,\cdot\rangle_\star$. Therefore, by Theorem~\ref{thm:LCD-projector-characterization}, the image $C=\operatorname{Im}(P)=UG$ is a $\star$-LCD additive left group code.

Finally, every endomorphism of $KG$ induced by right multiplication by an element of $KG$ is $K$-linear. On the other hand, $P$ is not $K$-linear, because $\pi_U$ is not $K$-linear: indeed, if $\pi_U$ were $K$-linear, then, since $K$ is one-dimensional over itself, its image would be either $\{0\}$ or $K$, contradicting the fact that $U$ is a nonzero proper $F$-subspace of $K$. Hence $P$ cannot be induced by right multiplication by any element of $KG$.
\end{proof}

The following example illustrates the lemma and shows that self-adjoint projectors need not arise from right multiplication by elements of $KG$.

\begin{example}
Let $F=\mathbb{F}_3$, $K=\mathbb{F}_9$, and $G=C_2\times C_2$. Consider on $K$ the trace-Hermitian form
$
\langle\alpha,\beta\rangle_{\mathrm{TH}}=\operatorname{Tr}_{K/F}(\alpha\beta^3).
$
Let $U=\langle 1\rangle_F$. Then $U$ is a nonzero proper $F$-subspace of $K$, and
$$
\langle 1,1\rangle_{\mathrm{TH}}
=
\operatorname{Tr}_{K/F}(1\cdot 1^3)
=
\operatorname{Tr}_{K/F}(1)
=
1+1^3
=
2\neq 0.
$$
Since $U=\langle 1\rangle_F$ is one-dimensional and $\langle 1,1\rangle_{\mathrm{TH}}\neq 0$, the restriction of $\langle\cdot,\cdot\rangle_{\mathrm{TH}}$ to $U$ is nondegenerate. Equivalently, $U\cap U^{\perp_{\mathrm{TH}}}=\{0\}$. Moreover, both $U$ and $U^{\perp_{\mathrm{TH}}}$ have dimension $1$ over $F$, while $\dim_F(K)=2$. Therefore, $K=U\oplus U^{\perp_{\mathrm{TH}}}$.

Thus the lemma applies and yields a self-adjoint projector
$$
P\Bigl(\sum_{g\in G}\lambda_g g\Bigr)=\sum_{g\in G}\pi_U(\lambda_g)\,g
$$
in $\operatorname{End}_{FG}(KG)$ such that $C=\operatorname{Im}(P)=UG$ is a TH-LCD additive left group code. Moreover, $P$ is not induced by right multiplication by any element of $KG$.

The same argument also applies to the trace-Euclidean form, since $\langle 1,1\rangle_{\mathrm{TE}}=\operatorname{Tr}_{K/F}(1)=2\neq 0$.

Additionally, observe that in this case $U = F$, and therefore $C = \operatorname{Im}(P) = UG = FG$. This implies that $C = \operatorname{Im}(P)$ is not a restricted idempotent additive left group code either; that is, $C \neq FGe$ for every idempotent $e \in KG$ with $e \neq 1$.
\end{example}

We now turn to Question~\ref{question-2}. The following result characterizes when an idempotent additive left group code is $\star$-LCD, where $\star\in\{\mathrm{TE},\mathrm{TH}\}$.

\begin{theorem}\label{thm:rhoe-lcd}
Let $e\in KG$ be an idempotent, and let $\rho_e:KG\longrightarrow KG$ be the $FG$-linear projector defined by $\rho_e(x)=xe$. Let $\star\in\{\mathrm{TE},\mathrm{TH}\}$, where the case
$\star=\mathrm{TH}$ is considered only when $m$ is even. Then the following conditions are equivalent:
\begin{enumerate}
    \item The additive left group code $FGe=\rho_e(FG)\subseteq KG$ is $\star$-LCD.

    \item For every $x\in FG$, if $\langle xe,ye\rangle_\star=0$ for all $y\in FG$, then $xe=0$.

    \item The form $\widetilde{B}_e:FG\times FG\longrightarrow F$ defined by
    $
    \widetilde{B}_e(x,y)=\langle xe,ye\rangle_\star
    $
    induces a non-degenerate form on $FG/\ker(\rho_e|_{FG})$, where
    $$
    \ker(\rho_e|_{FG})=\{x\in FG:xe=0\}=FG\cap KG(1-e).
    $$

    \item If $x_1,\dots,x_n$ is any $F$-basis of $FG$ and
    $
    M_e=\bigl(\langle x_i e,x_j e\rangle_\star\bigr)_{1\le i,j\le n},
    $
    then
    $
    \operatorname{rank}(M_e)=\dim_F(FGe).
    $
\end{enumerate}
\end{theorem}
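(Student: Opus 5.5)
The plan is to recognize all four conditions as restatements of one fact: the restriction of $\langle\cdot,\cdot\rangle_\star$ to the $F$-subspace $C:=FGe$ is non-degenerate. We prove a chain of equivalences $(1)\Leftrightarrow(2)\Leftrightarrow(3)\Leftrightarrow(4)$ using only finite-dimensional linear algebra over $F$. No structure of $G$ is needed beyond the fact that $\rho_e|_{FG}:FG\to C$, $x\mapsto xe$, is a surjective $F$-linear map.

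\emph{$(1)\Leftrightarrow(2)$.} By definition, $C$ is $\star$-LCD iff $C\cap C^{\perp_\star}=\{0\}$. An element $c\in C$ lies in $C^{\perp_\star}$ iff $\langle c,c'\rangle_\star=0$ for all $c'\in C$. Every element of $C$ has the form $xe$ with $x\in FG$, so this reads: $\langle xe,ye\rangle_\star=0$ for all $y\in FG$. (The form is reflexive, so the side on which orthogonality is imposed does not matter.) Hence $C\cap C^{\perp_\star}=\{0\}$ is exactly statement (2).

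\emph{$(2)\Leftrightarrow(3)$.} The form $\widetilde B_e$ is $F$-bilinear, because $\langle\cdot,\cdot\rangle_\star$ is $F$-bilinear and $x\mapsto xe$ is $F$-linear. Its left and right radicals contain $N:=\ker(\rho_e|_{FG})$. We check $N=FG\cap KG(1-e)$ from Proposition~\ref{idempotent-projector}(2), which gives $\ker\rho_e=KG(1-e)$. Therefore $\widetilde B_e$ descends to a well-defined bilinear form $\overline B_e$ on $FG/N$. Non-degeneracy of $\overline B_e$ means: if $\widetilde B_e(x,y)=0$ for all $y$, then $x\in N$, i.e.\ $xe=0$. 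This is precisely (2). Left and right non-degeneracy agree here, since $FG/N$ is finite-dimensional, or alternatively by reflexivity.

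\emph{$(3)\Leftrightarrow(4)$.} Note that $\rho_e$ induces an isomorphism $FG/N\cong C$, so $\dim_F(FG/N)=\dim_F(FGe)$. The matrix $M_e$ is the Gram matrix of $\widetilde B_e$ in the basis $x_1,\dots,x_n$. Its rank equals $n-\dim_F\operatorname{Rad}(\widetilde B_e)$, where $\operatorname{Rad}(\widetilde B_e)\supseteq N$. Hence $\operatorname{rank}(M_e)\le n-\dim N=\dim_F(FGe)$, with equality iff $\operatorname{Rad}(\widetilde B_e)=N$, which is iff $\overline B_e$ is non-degenerate. Equivalently, after a change of basis adapted to a complement of $N$, $M_e$ is block-diagonal with the Gram matrix of $\overline B_e$ and a zero block. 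The rank statement is independent of the chosen basis, since congruent matrices have equal rank.

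The only step needing care is this last one: correctly identifying the radical of $\widetilde B_e$ with $N$ and counting dimensions via $FG/N\cong FGe$. The rest is unwinding definitions. In the TH case, $\langle\cdot,\cdot\rangle_{\mathrm{TH}}$ is still $F$-bilinear and symmetric (Remark on forms), so the argument applies verbatim.
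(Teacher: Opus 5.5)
Your proposal is correct and follows essentially the same route as the paper: $(1)\Leftrightarrow(2)$ by writing every element of $FGe$ as $xe$ with $x\in FG$, and $(2)\Leftrightarrow(3)$ by noting $\ker(\rho_e|_{FG})\subseteq\operatorname{Rad}(\widetilde{B}_e)$, with equality being exactly condition (2). For $(3)\Leftrightarrow(4)$ you, like the paper, use $\operatorname{rank}(M_e)=n-\dim_F\operatorname{Rad}(\widetilde{B}_e)$ together with rank--nullity for $\rho_e|_{FG}:FG\to FGe$.
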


\begin{proof}
We first prove that $(1)$ and $(2)$ are equivalent. Since $\rho_e(FG)=FGe$, the code $FGe$ is $\star$-LCD if and only if $FGe\cap (FGe)^{\perp_\star}=\{0\}$, where the dual is taken in the ambient space $KG$. Every element of $FGe$ is of the form $xe$ for some $x\in FG$. Thus, for $x\in FG$, the condition $xe\in (FGe)^{\perp_\star}$ is equivalent to $\langle xe,ye\rangle_\star=0$ for all $y\in FG$, since the elements of $FGe$ are precisely the vectors $ye$ with $y\in FG$. Therefore, $FGe\cap (FGe)^{\perp_\star}=\{0\}$ if and only if, for every $x\in FG$, the condition $\langle xe,ye\rangle_\star=0$ for all $y\in FG$ implies $xe=0$. This proves $(1)\iff(2)$.

Next we show that $(2)$ and $(3)$ are equivalent. Define $\widetilde{B}_e:FG\times FG\longrightarrow F$ by
$
\widetilde{B}_e(x,y)=\langle xe,ye\rangle_\star.
$
If $x\in\ker(\rho_e|_{FG})$, then $xe=0$, and therefore
$
\widetilde{B}_e(x,y)=\langle xe,ye\rangle_\star=0
$
for all $y\in FG$. Hence
$
\ker(\rho_e|_{FG})\subseteq \operatorname{Rad}(\widetilde{B}_e),
$
so $\widetilde{B}_e$ induces a well-defined form on $FG/\ker(\rho_e|_{FG})$. The induced form is given by
$$
\overline{B}_e(x+\ker(\rho_e|_{FG}),\,y+\ker(\rho_e|_{FG}))=\langle xe,ye\rangle_\star.
$$
To see that $\overline{B}_e$ is well defined, suppose that
$$
x+\ker(\rho_e|_{FG})=x'+\ker(\rho_e|_{FG})
\quad\text{and}\quad
y+\ker(\rho_e|_{FG})=y'+\ker(\rho_e|_{FG}).
$$
Then $x'=x+k_1$ and $y'=y+k_2$ for some $k_1,k_2\in\ker(\rho_e|_{FG})$. Since $k_1e=0$ and $k_2e=0$, we have $x'e=(x+k_1)e=xe$ and $y'e=(y+k_2)e=ye$. Therefore,
$
\langle x'e,y'e\rangle_\star=\langle xe,ye\rangle_\star.
$

Moreover, the induced form $\overline{B}_e$ is non-degenerate if and only if
$
\operatorname{Rad}(\widetilde{B}_e)=\ker(\rho_e|_{FG}).
$
Now, by definition,
$
x\in \operatorname{Rad}(\widetilde{B}_e)
\iff
\langle xe,ye\rangle_\star=0 \text{ for all } y\in FG.
$
Hence condition $(2)$ is equivalent to the inclusion
$
\operatorname{Rad}(\widetilde{B}_e)\subseteq \ker(\rho_e|_{FG}).
$
Since we already know that
$
\ker(\rho_e|_{FG})\subseteq \operatorname{Rad}(\widetilde{B}_e),
$
it follows that
$
\operatorname{Rad}(\widetilde{B}_e)=\ker(\rho_e|_{FG}),
$
which is equivalent to the non-degeneracy of $\overline{B}_e$. Therefore $(2)\iff(3)$.

Finally, we prove that $(3)$ and $(4)$ are equivalent. Let $x_1,\dots,x_n$ be an $F$-basis of $FG$, and let
$$
M_e=\bigl(\langle x_i e,x_j e\rangle_\star\bigr)_{1\le i,j\le n}.
$$
This is the Gram matrix of the form $\widetilde{B}_e$ with respect to the chosen basis. Therefore,
$$
\dim_F(FG)-\operatorname{rank}(M_e)=\dim_F(\operatorname{Rad}(\widetilde{B}_e)).
$$
Condition $(3)$ is equivalent to $\operatorname{Rad}(\widetilde{B}_e)=\ker(\rho_e|_{FG})$, and hence to
$$
\dim_F(FG)-\operatorname{rank}(M_e)=\dim_F(\ker(\rho_e|_{FG})).
$$
Equivalently,
$
\operatorname{rank}(M_e)=\dim_F(FG)-\dim_F(\ker(\rho_e|_{FG})).
$
By the rank-nullity theorem applied to $\rho_e|_{FG}:FG\longrightarrow FGe$, the right-hand side is equal to $\dim_F(FGe)$. Thus $(3)\iff(4)$, and the proof is complete.
\end{proof}

\begin{example}
Let $F=\mathbb{F}_3$, $K=\mathbb{F}_9$, and $G=C_2=\langle g\rangle$. Consider the element $e=2(1+g)\in FG\subseteq KG$. Since $(1+g)^2=1+2g+g^2=2(1+g)$, we obtain $e^2=4(1+g)^2=2(1+g)=e$, so $e$ is an idempotent.

Let $x=a+bg\in FG$, with $a,b\in F$. Then
$$
\rho_e(x)=xe=(a+bg)\,2(1+g)=2(a+b)(1+g),
$$
and therefore $\rho_e(FG)=F(1+g)$. In particular, $\dim_F(\rho_e(FG))=1$.

With respect to the $F$-basis $\{1,g\}$ of $FG$, we have $\rho_e(1)=2(1+g)$ and $\rho_e(g)=2(1+g)$. Hence every entry of the Gram matrix is equal to $\langle 2(1+g),2(1+g)\rangle_{\mathrm{TE}}$. Since the coordinates of $2(1+g)$ in the basis $\{1,g\}$ are $(2,2)$, we get
$$
\langle 2(1+g),2(1+g)\rangle_{\mathrm{TE}}
=
\operatorname{Tr}_{K/F}(2\cdot 2+2\cdot 2)
=
\operatorname{Tr}_{K/F}(2).
$$
As $K/F=\mathbb{F}_9/\mathbb{F}_3$ has degree $2$, we have $\operatorname{Tr}_{K/F}(2)=2+2^3=2+2=1$. Thus
$
M_e=
\begin{pmatrix}
1&1\\
1&1
\end{pmatrix},
$
so $\operatorname{rank}(M_e)=1=\dim_F(\rho_e(FG))$. By Theorem~\ref{thm:rhoe-lcd}, condition $(4)$ holds, and therefore $\rho_e(FG)$ is $\mathrm{TE}$-LCD.

The same argument applies to the trace-Hermitian form. Indeed, the coordinates of $2(1+g)$ lie in $F=\mathbb{F}_3$, so they are fixed by the involution $x\mapsto \overline{x}=x^3$. Hence the trace-Hermitian Gram matrix coincides with $M_e$, and condition $(4)$ of Theorem~\ref{thm:rhoe-lcd} shows that $\rho_e(FG)$ is also $\mathrm{TH}$-LCD.
\end{example}

\begin{remark}\label{rem:rhoe-lcd-and-projector-lcd}
Theorem~\ref{thm:rhoe-lcd} and
Theorem~\ref{thm:LCD-projector-characterization} describe the LCD property
from complementary perspectives. If $FG$ is semisimple, then $FGe$ is
automatically a projector additive left group code. Hence, $FGe$ is
$\star$-LCD if and only if there exists a self-adjoint $FG$-linear projector
$Q$ such that $\operatorname{Im}(Q)=FGe$. Theorem~\ref{thm:rhoe-lcd}
provides conditions for this property that can be verified directly from
$e$. Notice, however, that $Q$ need not be the map
$\rho_e:KG\longrightarrow KG$ defined by $\rho_e(x)=xe$, since
$\operatorname{Im}(\rho_e)=KGe$, whereas $FGe=\rho_e(FG)$.

Theorem~\ref{thm:rhoe-lcd} is also relevant in the non-semisimple case, where
$FGe$ is not a priori a projector additive left group code. If one of the
equivalent conditions of that theorem holds, then $FGe$ is $\star$-LCD.
Consequently, by Theorem~\ref{thm:LCD-projector-characterization}, there
exists a self-adjoint $FG$-linear projector with image $FGe$. Thus, in the
non-semisimple setting, the $\star$-LCD property is a sufficient condition
for $FGe$ to be a projector additive left group code.
\end{remark}

The preceding observation shows that the question of whether a restricted
idempotent additive group code is a projector additive group code is
completely settled in the semisimple case and admits an affirmative answer
for $\star$-LCD codes in the non-semisimple case. This leads naturally to the
following open question.

\begin{question}\label{que:restricted-idempotent-projector}
Let $e\in KG$ be an idempotent, and assume that $FG$ is not semisimple.
Under what conditions is the restricted idempotent additive left group code
$FGe$ a projector additive left group code? In particular, is every
restricted idempotent additive left group code $FGe$ a projector additive
left group code?
\end{question}

\subsection{Self-dual projector additive left group codes}

In this section, we consider the self-dual counterpart of the questions posed above. We first give a sufficient condition for a projector additive left group code to be self-dual in terms of its defining projector. We then specialize this discussion to the classical idempotent setting and finally characterize when a restricted idempotent additive left group code of the form $\rho_e(FG)=FGe$ is $\star$-self-dual.

\begin{proposition}\label{prop:projector-selfdual}
Let $C\le KG$ be a projector additive left group code and $P\in\operatorname{End}_{FG}(KG)$ an $FG$-linear projector such that
$
C=\operatorname{Im}(P).
$
Then the following hold:
\begin{enumerate}
    \item If $P^\ast=I-P$ with respect to $\langle\cdot,\cdot\rangle_{\mathrm{TE}}$, then $C$ is TE-self-dual.
    \item If $m$ is even and $P^\ast=I-P$ with respect to $\langle\cdot,\cdot\rangle_{\mathrm{TH}}$, then $C$ is TH-self-dual.
\end{enumerate}
\end{proposition}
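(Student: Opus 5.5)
The plan is to compute $C^{\perp_\star}$ directly from the adjoint, using Lemma~\ref{lem:image-projector-dual-kernel}, and then compare it with $C$. I treat the trace-Euclidean case; the trace-Hermitian case is word-for-word the same, because Lemma~\ref{lem:image-projector-dual-kernel} and Lemma~\ref{lem:adjoint-FG-linear} are stated for both forms.

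First, Lemma~\ref{lem:image-projector-dual-kernel} applied with $T=P$ gives
\[
C^{\perp_{\mathrm{TE}}}=\operatorname{Im}(P)^{\perp_{\mathrm{TE}}}=\ker(P^\ast).
\]
The hypothesis $P^\ast=I-P$ turns this into $C^{\perp_{\mathrm{TE}}}=\ker(I-P)$. Second, I identify $\ker(I-P)$ with $\operatorname{Im}(P)$. If $x=P(y)$, then $(I-P)x=P(y)-P^2(y)=0$, since $P^2=P$. Conversely, if $(I-P)x=0$, then $x=P(x)\in\operatorname{Im}(P)$. Hence $\ker(I-P)=\operatorname{Im}(P)=C$, and therefore $C^{\perp_{\mathrm{TE}}}=C$, which is exactly TE-self-duality.

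As a consistency check, I can also note that $\dim_F C=\dim_F(KG)/2=mn/2$. This follows from $KG=\operatorname{Im}(P)\oplus\ker(P)$ together with $\dim_F C+\dim_F C^{\perp_{\mathrm{TE}}}=mn$. The argument does not need this fact, but it confirms that the hypothesis forces $mn$ to be even.

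There is no real obstacle here. The only point to watch is that the adjoint is taken with respect to the trace form $\langle\cdot,\cdot\rangle_\star$, which is $F$-valued and non-degenerate, rather than the $K$-valued forms. That choice is what makes $P^\ast$ well defined for a merely $FG$-linear $P$, as explained before Lemma~\ref{lem:adjoint-FG-linear}, and it is what allows Lemma~\ref{lem:image-projector-dual-kernel} to be applied.
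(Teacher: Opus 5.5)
Your proof is correct and follows essentially the same route as the paper: you identify $C^{\perp_\star}=\ker(P^\ast)$ and then use $\ker(I-P)=\operatorname{Im}(P)$ for a projector. The only difference is that you cite Lemma~\ref{lem:image-projector-dual-kernel} where the paper cites Lemma~\ref{lem:adjoint-basic}(3), and these state the same fact.
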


\begin{proof}
Since $P^2=P$, we have $KG=\operatorname{Im}(P)\oplus\ker(P)$, and hence $\ker(I-P)=\operatorname{Im}(P)=C$.

For $\star\in\{\mathrm{TE},\mathrm{TH}\}$, Lemmas~\ref{lem:adjoint-FG-linear} and \ref{lem:adjoint-basic} give
$
C^{\perp_\star}=(\operatorname{Im}(P))^{\perp_\star}=\ker(P^\ast).
$
If $P^\ast=I-P$ with respect to the corresponding form, then
$
C^{\perp_\star}=\ker(I-P)=C.
$

Thus, in case $(1)$ we obtain $C=C^{\perp_{\mathrm{TE}}}$, so $C$ is TE-self-dual. In case $(2)$, assuming $m$ is even, we obtain $C=C^{\perp_{\mathrm{TH}}}$, so $C$ is TH-self-dual.
\end{proof}

The converse of Proposition~\ref{prop:projector-selfdual} does not hold in this generality: the self-duality of $C$ alone does not imply the existence of a projector $Q$ with $\operatorname{Im}(Q)=C$ and $Q^\ast=I-Q$. In the classical idempotent setting, however, the dual can be described explicitly, which leads to a more precise characterization.

\begin{corollary}\label{self-idempo} Let $e\in KG$ be an idempotent and let $C=KGe$. Then the following hold:
\begin{enumerate}
    \item $C$ is self-dual with respect to $\langle\cdot,\cdot\rangle_{\mathrm{TE}}$ if and only if $e^\ast=1-e$.
    \item If $m$ is even, then $C$ is self-dual with respect to $\langle\cdot,\cdot\rangle_{\mathrm{TH}}$ if and only if $\overline{e}^\ast=1-e$.
    \end{enumerate}
\end{corollary}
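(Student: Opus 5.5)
Since $C=KGe$ is a left ideal, Proposition~\ref{proposition:Ksubspace-orthogonals} lets us replace $C^{\perp_{\mathrm{TE}}}$ by $C^{\perp_{\mathrm E}}$ (and $C^{\perp_{\mathrm{TH}}}$ by $C^{\perp_{\mathrm H}}$). The dual then has an explicit description. By Remark~\ref{propiedades-adjunto}, $\rho_e^{\ast_{\mathrm{TE}}}=\rho_e^{\ast_{\mathrm E}}=\rho_{e^\ast}$. By Lemma~\ref{lem:image-projector-dual-kernel} (or Lemma~\ref{lem:adjoint-basic}(3)),
$$
C^{\perp_{\mathrm{TE}}}=\operatorname{Im}(\rho_e)^{\perp_{\mathrm{TE}}}=\ker(\rho_{e^\ast}).
$$
Since $e^\ast$ is an idempotent, Proposition~\ref{idempotent-projector}(2) gives $\ker(\rho_{e^\ast})=KG(1-e^\ast)$. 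So the plan is to show that $KGe=KG(1-e^\ast)$ if and only if $e^\ast=1-e$.

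The direction ($\Leftarrow$) is immediate. If $e^\ast=1-e$, then $1-e^\ast=e$, so $C^{\perp_{\mathrm{TE}}}=KGe=C$. Alternatively, $\rho_e^\ast=\rho_{1-e}=I-\rho_e$, and Proposition~\ref{prop:projector-selfdual} applies.

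The direction ($\Rightarrow$) is the main point, because equality of left ideals does not immediately force equality of the idempotents generating them. Write $f=1-e^\ast$, which is an idempotent, and assume $KGe=KGf$. The task is to show $f=e$. Equal left ideals only give $e=ef$ and $f=fe$, which is not yet enough; the extra symmetry must come from $f$ being tied to $e^\ast$. Take $\ast$ of the identities $e=ef$ and $f=fe$:
\begin{itemize}
\item $e=ef$ gives $e^\ast=f^\ast e^\ast$. Since $e^\ast=1-f$ and $f^\ast=1-e$, this reads $1-f=(1-e)(1-f)$, and expanding yields $e=ef$ again, so this brings nothing new.
\item The useful relation is the orthogonality built into the setup: $C\subseteq C^{\perp}$ forces $\langle xe,ye\rangle_{\mathrm E}=\operatorname{coef}_{1_G}(xee^\ast y^\ast)=0$ for all $x,y$. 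Hence $ee^\ast=0$, and applying $\ast$ gives $ee^\ast=0$ as well.
\end{itemize}
From $f=1-e^\ast$, $e=ef$ and $ee^\ast=0$ we get $e=e-ee^\ast$, which is consistent but does not pin down $f$. So also use $f=fe$: $(1-e^\ast)e=1-e^\ast$, i.e.\ $e-e^\ast e=1-e^\ast$. It would then remain to show $e^\ast e=0$. This follows from the self-duality applied on the other side, or from dimension: $\dim_K KGe=n/2$, and $e^\ast e$ is an idempotent-like element. Alternatively, since $1-e^\ast\in KGe$, we get $(1-e^\ast)=(1-e^\ast)e$; combined with $e^\ast e\in KGe\cap KG e^\ast$, one should deduce $e^\ast e=0$. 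Hence $e=1-e^\ast$.

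I expect this step, extracting $e^\ast e=0$ (the "right-sided" orthogonality), to be the main obstacle. I would first try to use that $e^\ast$ and $1-e$ generate the same right ideal: apply $\ast$ to $KGe=KG(1-e^\ast)$ to get $e^\ast KG=(1-e)KG$. Then $e^\ast=(1-e)e^\ast$, which gives $ee^\ast=0$, and symmetrically $1-e=e^\ast(1-e)$, which gives $e^\ast e=e^\ast-1+e$. Comparing with $e-e^\ast e=1-e^\ast$ from above, these combine to force $e^\ast=1-e$ after a short manipulation.

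The trace-Hermitian case is identical, using $\overline{e}^\ast$ in place of $e^\ast$. Here $x\mapsto\overline{x}^\ast$ is a semilinear involutive antiautomorphism, $\rho_e^{\ast_{\mathrm{TH}}}=\rho_{\overline e^\ast}$, and Proposition~\ref{proposition:Ksubspace-orthogonals} supplies the Hermitian part.
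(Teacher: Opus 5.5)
You reduce correctly to $KGe=KG(1-e^\ast)$, and your direction ($\Leftarrow$) agrees with the paper. The gap is the step you flagged yourself: $e^\ast e=0$ is never proved, and none of your suggested routes proves it.

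\textbf{Why your routes fail.}
\begin{enumerate}
\item The right-ideal relations $e^\ast=(1-e)e^\ast$ and $1-e=e^\ast(1-e)$ are the $\ast$-images of $e=e(1-e^\ast)$ and $1-e^\ast=(1-e^\ast)e$. They return exactly $ee^\ast=0$ and $e^\ast e=e+e^\ast-1$, the same two relations you already had. So the ``short manipulation'' does not exist.
\item The element $e^\ast e$ is not idempotent-like. It squares to zero: $(e^\ast e)^2=e^\ast(ee^\ast)e=0$.
\item The membership $e^\ast e\in KGe^\ast$ is unjustified. If it held, it would already give $e^\ast e=e^\ast e\,e^\ast=0$.
\end{enumerate}

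\textbf{No formal manipulation can work.} Take $M_2(K)$ with transposition as involution, where $i\in K$ satisfies $i^2=-1$. The idempotent $e=\begin{pmatrix}1&i\\0&0\end{pmatrix}$ satisfies $ee^{T}=0$. It also satisfies $M_2(K)e=M_2(K)(1-e^{T})$, since both are the matrices with all rows in $K(1,i)$. Yet $e^{T}e=\begin{pmatrix}1&i\\ i&-1\end{pmatrix}\neq 0$. So the relations alone cannot force $e^\ast e=0$. You need an input specific to group algebras.

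The paper's own proof makes the same jump (``using $ee^\ast=0$, this becomes $e=1-e^\ast$''). That step is valid only when $e$ and $e^\ast$ commute, for example when $G$ is abelian.

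\textbf{A fix.} The simplest input is the augmentation $\varepsilon\colon KG\to K$, $\sum_{g} a_g g\mapsto\sum_{g} a_g$.
\begin{enumerate}
\item It is a ring homomorphism with $\varepsilon(x^\ast)=\varepsilon(x)$ and $\varepsilon(\overline{x}^\ast)=\overline{\varepsilon(x)}$.
\item Since $e$ is idempotent, $\varepsilon(e)\in\{0,1\}$.
\item Applying $\varepsilon$ to $KGe=KG(1-e^\ast)$ gives $K\varepsilon(e)=K(1-\varepsilon(e))$, which is impossible for $\varepsilon(e)\in\{0,1\}$.
\item Applying $\varepsilon$ to $e^\ast=1-e$ gives $2\varepsilon(e)=1$, which is also impossible.
\end{enumerate}
Hence, in both parts, neither side of the equivalence can ever hold, so the corollary is true vacuously. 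This matches the paper's subsequent remark that Euclidean self-dual idempotent group codes do not exist. The same argument works verbatim with $\overline{e}^\ast$ in place of $e^\ast$, which covers the trace-Hermitian case.
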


\begin{proof}
Assume first that $e^\ast=1-e$. Let $\rho_e:KG\longrightarrow KG$ be the $KG$-linear projector defined by $\rho_e(x)=xe$. Then $\operatorname{Im}(\rho_e)=KGe=C$. By Remark~\ref{propiedades-adjunto}, the adjoint of $\rho_e$ with respect to $\langle\cdot,\cdot\rangle_{\mathrm{TE}}$ is $\rho_{e^\ast}$. By Remark~\ref{adjunto-E-TE}, since $\rho_e$ is $K$-linear, its adjoint with respect to $\langle\cdot,\cdot\rangle_{\mathrm{TE}}$ coincides with its adjoint with respect to $\langle\cdot,\cdot\rangle_{\mathrm{E}}$. Since $e^\ast=1-e$, we obtain
$$
\rho_e^{\ast_{\mathrm{TE}}}=\rho_{e^\ast}=\rho_{1-e}=I-\rho_e.
$$
Therefore, Proposition~\ref{prop:projector-selfdual} implies that $C$ is self-dual with respect to $\langle\cdot,\cdot\rangle_{\mathrm{TE}}$.

If $m$ is even and $\overline e^\ast=1-e$, the same argument applies to the trace-Hermitian form.

Conversely, assume that $C$ is self-dual with respect to $\langle\cdot,\cdot\rangle_{\mathrm{TE}}$. Since $C=\operatorname{Im}(\rho_e)$, Lemma~\ref{lem:adjoint-basic}(3) gives
$
C^{\perp_{\mathrm{TE}}}=\ker(\rho_e^{\ast_{\mathrm{TE}}}).
$
By self-duality,
$
C=C^{\perp_{\mathrm{TE}}},
$
and hence
$
KGe=\ker(\rho_e^{\ast_{\mathrm{TE}}}).
$
By Remark~\ref{propiedades-adjunto}, we have
$
\rho_e^{\ast_{\mathrm{TE}}}=\rho_{e^\ast}.
$
Hence, by Proposition~\ref{idempotent-projector},
$$
\ker(\rho_e^{\ast_{\mathrm{TE}}})=\ker(\rho_{e^\ast})=KG(1-e^\ast).
$$

Thus
$
KGe=KG(1-e^\ast).
$
Since $e\in KGe=KG(1-e^\ast)$, there exists $a\in KG$ such that
$
e=a(1-e^\ast).
$
Multiplying on the right by $e^\ast$, we obtain
$
ee^\ast=a(1-e^\ast)e^\ast=0.
$
Similarly, since $1-e^\ast\in KG(1-e^\ast)=KGe$, there exists $b\in KG$ such that
$
1-e^\ast=be.
$
Multiplying on the right by $e$, we get
$
(1-e^\ast)e=be^2=be=1-e^\ast.
$
Using $ee^\ast=0$, this becomes
$
e=1-e^\ast.
$
Hence
$
e^\ast=1-e.
$

If $m$ is even and $C$ is self-dual with respect to $\langle\cdot,\cdot\rangle_{\mathrm{TH}}$, the same argument applies. In this case, Remark~\ref{propiedades-adjunto} gives
$
\ker(\rho_e^{\ast_{\mathrm{TH}}})=KG(1-\overline e^\ast),
$
and self-duality yields
$
KGe=\ker(\rho_e^{\ast_{\mathrm{TH}}})=KG(1-\overline e^\ast).
$
Arguing exactly as above, one obtains
$
\overline e^\ast=1-e.
$
\end{proof}

\begin{remark} For an idempotent $e\in KG$, the condition $e^\ast=1-e$ is equivalent to the two identities $ee^\ast=0$ and $(1-e^\ast)e=1-e^\ast$. Therefore, the result of  Corollary \ref{self-idempo} is exactly analogous to that proved in \cite{DeLaCruzWillems2021} for twisted group codes and, more generally, in \cite{BBW24} for twisted skew group codes. We include it here for completeness and to point out that, in the case of classical group codes, it has no application, since Euclidean self-dual idempotent group codes do not exist; see \cite{Willems2002}.
\end{remark}

Although Euclidean self-dual idempotent group codes do not exist, trace-Euclidean self-dual idempotent additive left group codes, that is, trace-Euclidean self-dual restricted idempotent additive left group codes, do exist, as illustrated by the following example.

\begin{example}

Let $F=\mathbb{F}_2$, let
$K=\mathbb{F}_4=\mathbb{F}_2(\omega)$, where
$\omega^2+\omega+1=0$, and let
$G=C_6=\langle g\mid g^6=1\rangle$. Let
$e=\omega^2g^2+\omega g^4\in KG$.
One checks that $e^2=e$, so $e$ is an idempotent. Hence
$
C=FGe=\mathbb{F}_2C_6e
$
defines an additive code in $KG$. The elements $e,ge,g^2e,g^3e,g^4e,g^5e$ are $\mathbb{F}_2$-linearly independent, so $\dim_{\mathbb{F}_2}(C)=6$ and $|C|=64$. Since $e\in C$ and $\operatorname{wt}(e)=2$, we have $d(C)\le 2$. One verifies that $C$ contains no word of weight $1$, then $d(C)=2$. Thus $C$ has parameters $(6,2^6,2)$ as an additive code over $\mathbb{F}_4$. Moreover, $C$ is trace-Euclidean self-orthogonal in $KG$. Since
$
\dim_{\mathbb{F}_2}(KG)=2|G|=12
$
and
$
\dim_{\mathbb{F}_2}(C)=6,
$
it follows that
$
C=C^{\perp_{\mathrm{TE}}}.
$
Hence $C$ is trace-Euclidean self-dual in $KG$.
\end{example}

We now characterize self-duality for restricted idempotent additive left group codes.

\begin{theorem}\label{thm:rhoe-selfdual}
Let $e\in KG$ be an idempotent, and let $\rho_e:KG\longrightarrow KG$ be the $FG$-linear projector defined by $\rho_e(x)=xe$. Let $\star\in\{\mathrm{TE},\mathrm{TH}\}$, where the case
$\star=\mathrm{TH}$ is considered only when $m$ is even. Then the following conditions are equivalent:
\begin{enumerate}
    \item The restricted idempotent additive left group code $FGe=\rho_e(FG)\subseteq KG$ is $\star$-self-dual.

    \item For all $x,y\in FG$, one has $\langle xe,ye\rangle_\star=0$, and $\dim_F(FGe)=\frac{1}{2}\dim_F(KG)$.

    \item The form $\widetilde{B}_e:FG\times FG\longrightarrow F$ defined by $\widetilde{B}_e(x,y)=\langle xe,ye\rangle_\star$ induces a well-defined form $\overline{B}_e:(FG/\ker(\rho_e|_{FG}))\times (FG/\ker(\rho_e|_{FG}))\longrightarrow F$ given by $$\overline{B}_e(x+\ker(\rho_e|_{FG}),\,y+\ker(\rho_e|_{FG}))=\langle xe,ye\rangle_\star.$$ Moreover, $\overline{B}_e$ is the zero form, and $\dim_F(FGe)=\frac{1}{2}\dim_F(KG).$

    \item If $x_1,\dots,x_n$ is any $F$-basis of $FG$ and $M_e=\bigl(\langle x_i e,x_j e\rangle_\star\bigr)_{1\le i,j\le n}$, then $M_e=0$, and moreover $\dim_F(FGe)=\frac{1}{2}\dim_F(KG)$.
\end{enumerate}
\end{theorem}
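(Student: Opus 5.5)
The plan is to follow the pattern of Theorem~\ref{thm:rhoe-lcd}, proving $(1)\iff(2)\iff(3)\iff(4)$ in a cycle of easy equivalences. The only genuine input is the dimension count for a non-degenerate form. By the Remark following the definition of the forms, $\langle\cdot,\cdot\rangle_\star$ is a non-degenerate $F$-bilinear form on $KG$ for $\star\in\{\mathrm{TE},\mathrm{TH}\}$. Hence every additive left group code $C$ satisfies $\dim_F(C)+\dim_F(C^{\perp_\star})=\dim_F(KG)=mn$.

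For $(1)\iff(2)$, write $C=FGe$ and split self-duality into self-orthogonality plus a dimension count. If $C=C^{\perp_\star}$, then $C\subseteq C^{\perp_\star}$, which gives $\langle xe,ye\rangle_\star=0$ for all $x,y\in FG$, since every codeword has the form $xe$ with $x\in FG$. The equality $\dim_F(C)=\dim_F(C^{\perp_\star})$ together with the dimension formula then forces $\dim_F(FGe)=\tfrac12\dim_F(KG)$. Conversely, the vanishing of all $\langle xe,ye\rangle_\star$ means $C\subseteq C^{\perp_\star}$. The dimension hypothesis gives $\dim_F(C^{\perp_\star})=\dim_F(KG)-\dim_F(C)=\dim_F(C)$, so the inclusion is an equality.

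For $(2)\iff(3)$, I would first reuse the well-definedness argument from the proof of Theorem~\ref{thm:rhoe-lcd}. If $x'=x+k_1$ and $y'=y+k_2$ with $k_1e=k_2e=0$, then $x'e=xe$ and $y'e=ye$. Hence $\overline{B}_e$ is well defined unconditionally, with no self-orthogonality needed. Then $\overline{B}_e$ is the zero form exactly when $\widetilde{B}_e$ vanishes identically on $FG\times FG$, which is the first clause of $(2)$. The dimension condition is the same in both statements.

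For $(3)\iff(4)$, note that $M_e$ is the Gram matrix of $\widetilde{B}_e$ in the basis $x_1,\dots,x_n$. By $F$-bilinearity, $\widetilde{B}_e=0$ if and only if $\widetilde{B}_e(x_i,x_j)=0$ for all $i,j$, that is, if and only if $M_e=0$. Since this holds for one basis exactly when it holds for every basis, the phrase ``any basis'' in $(4)$ causes no trouble.

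No step is a real obstacle. The only point that needs care is using non-degeneracy of the trace form on all of $KG$, rather than on $C$, to get the dimension identity. This identity is stated earlier in the paper for $\mathrm{TE}$, and for $\mathrm{TH}$ when $m$ is even.
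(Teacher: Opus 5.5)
Your proposal is correct and follows essentially the same route as the paper: you split self-duality into self-orthogonality plus the dimension count coming from non-degeneracy of $\langle\cdot,\cdot\rangle_\star$ on $KG$, observe that $\overline{B}_e$ is well defined unconditionally and vanishes exactly when $\widetilde{B}_e$ does, and recognize $M_e$ as the Gram matrix of $\widetilde{B}_e$. The only difference is cosmetic: you link (4) to (3) while the paper links it directly to (2), and this does not change the argument.
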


\begin{proof}
We prove the equivalences step by step.

\noindent
$(1)\Rightarrow(2)$. Assume that $FGe$ is $\star$-self-dual. Then $FGe=(FGe)^{\perp_\star}$. Hence every pair of elements of $FGe$ is orthogonal with respect to $\langle\cdot,\cdot\rangle_\star$. In particular, for all $x,y\in FG$, we have $\langle xe,ye\rangle_\star=0$. Moreover, since the form is non-degenerate on $KG$ and $FGe=(FGe)^{\perp_\star}$, we have
$
\dim_F(FGe)+\dim_F((FGe)^{\perp_\star})=\dim_F(KG).
$
Thus
$
2\dim_F(FGe)=\dim_F(KG),
$
and hence
$
\dim_F(FGe)=\frac{1}{2}\dim_F(KG).
$

\noindent
$(2)\Rightarrow(1)$. Assume that $\langle xe,ye\rangle_\star=0$ for all $x,y\in FG$, and that $\dim_F(FGe)=\frac{1}{2}\dim_F(KG)$. Then every element of $FGe$ is orthogonal to every element of $FGe$, so
$
FGe\subseteq (FGe)^{\perp_\star}.
$
Since the form is non-degenerate on $KG$, we have
$
\dim_F(FGe)+\dim_F((FGe)^{\perp_\star})=\dim_F(KG).
$
Using the dimension hypothesis, it follows that
$$
\dim_F((FGe)^{\perp_\star})=\frac{1}{2}\dim_F(KG)=\dim_F(FGe).
$$
Hence the inclusion $FGe\subseteq (FGe)^{\perp_\star}$ is an equality, and thus $FGe$ is $\star$-self-dual.

\noindent
$(2)\Leftrightarrow(3)$. Define $\widetilde{B}_e:FG\times FG\longrightarrow F$ by $\widetilde{B}_e(x,y)=\langle xe,ye\rangle_\star$. Since $$\ker(\rho_e|_{FG})=\{x\in FG:xe=0\},$$ if $x-x'\in\ker(\rho_e|_{FG})$ and $y-y'\in\ker(\rho_e|_{FG})$, then $(x-x')e=0$ and $(y-y')e=0$. Hence
$
\langle xe,ye\rangle_\star=\langle x'e,y'e\rangle_\star.
$
Therefore, $\widetilde{B}_e$ induces a well-defined form $$\overline{B}_e:(FG/\ker(\rho_e|_{FG}))\times (FG/\ker(\rho_e|_{FG}))\longrightarrow F$$ given by $\overline{B}_e(x+\ker(\rho_e|_{FG}),\,y+\ker(\rho_e|_{FG}))=\langle xe,ye\rangle_\star$. This induced form is zero if and only if
$
\langle xe,ye\rangle_\star=0
$
for all $x,y\in FG$. Therefore (2) and (3) are equivalent.

\noindent
$(2)\Leftrightarrow(4)$. Let $x_1,\dots,x_n$ be any $F$-basis of $FG$, and let
$$
M_e=\bigl(\langle x_i e,x_j e\rangle_\star\bigr)_{1\le i,j\le n}.
$$
Then $M_e=0$ if and only if $\langle x_i e,x_j e\rangle_\star=0$ for all $1\le i,j\le n$. Since $x_1,\dots,x_n$ is a basis of $FG$, this is equivalent to requiring that $\langle xe,ye\rangle_\star=0$ for all $x,y\in FG$. The dimension condition is the same in (2) and (4). Hence (2) and (4) are equivalent.
\end{proof}


\begin{remark}\label{rem:rhoe-selfdual-and-projector-selfdual}
Theorem~\ref{thm:rhoe-selfdual} and
Proposition~\ref{prop:projector-selfdual} describe self-duality from
complementary perspectives. If $FG$ is semisimple, then $FGe$ is a projector
additive left group code. Hence, there exists an $FG$-linear projector $Q$
such that $\operatorname{Im}(Q)=FGe$. If $Q$ can be chosen so that
$Q^*=I-Q$, then $FGe$ is $\star$-self-dual.

In contrast with the LCD situation discussed in
Remark~\ref{rem:rhoe-lcd-and-projector-lcd}, where the existence of a
self-adjoint projector characterizes the $\star$-LCD property, the condition
$Q^*=I-Q$ is only sufficient for $\star$-self-duality. Thus, the projector
approach gives a complete characterization in the LCD case, whereas it
provides a construction criterion, but not a characterization, in the
self-dual case.

Finally, the condition $e^*=1-e$ characterizes self-duality of the full
$K$-linear idempotent group code $KGe$, but does not in general characterize
self-duality of the restricted code $FGe$.
\end{remark}

 \subsection{Murray--von Neumann equivalence of projectors}

In this subsection, we relate additive left group codes defined by $FG$-linear projectors to Murray--von Neumann equivalence in the endomorphism ring $\operatorname{End}_{FG}(KG)$.

\begin{definition}
Let $A$ be a ring, and let $e,f\in A$ be idempotents. We say that $e$ and $f$ are \emph{Murray--von Neumann equivalent}, and write $e\sim_{\mathrm{MvN}}f$, if there exist $a,b\in A$ such that $e=ba$ and $f=ab$.
\end{definition}

In the present setting, this notion applies to projectors in $\operatorname{End}_{FG}(KG)$ and provides an algebraic characterization of when their images are isomorphic as left $FG$-modules (see Proposition~\ref{prop:equivalent-projectors}).

\begin{lemma}\label{lem:adjoint-neu}
Let $\star\in\{\mathrm{TE},\mathrm{TH}\}$, where the case $\star=\mathrm{TH}$ is considered only when $m$ is even. Let $P,Q\in \operatorname{End}_{FG}(KG)$ be $FG$-linear projectors. Then the following are equivalent:
\begin{enumerate}
    \item $P\sim_{\mathrm{MvN}}Q$.
    \item $P^{\ast_\star}\sim_{\mathrm{MvN}}Q^{\ast_\star}$.
\end{enumerate}
\end{lemma}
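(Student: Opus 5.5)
The plan is to use that taking adjoints reverses products and is an involution on $\operatorname{End}_{FG}(KG)$. These facts are Lemma~\ref{lem:adjoint-basic}(1),(2), which apply because $\langle\cdot,\cdot\rangle_\star$ is a non-degenerate $F$-bilinear form. By Lemma~\ref{lem:adjoint-FG-linear}, the adjoint of an $FG$-linear map is again $FG$-linear, so all elements produced below stay in the ring $A=\operatorname{End}_{FG}(KG)$. First I note that $P^{\ast_\star}$ and $Q^{\ast_\star}$ are again projectors in $A$: indeed $(P^{\ast})^2=(P^2)^\ast=P^\ast$ by Lemma~\ref{lem:adjoint-basic}(2). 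Hence statement (2) makes sense within the definition of Murray--von Neumann equivalence in $A$.

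For $(1)\Rightarrow(2)$, I will take $a,b\in A$ with $P=ba$ and $Q=ab$, and set $a'=b^\ast$ and $b'=a^\ast$, both in $A$ by Lemma~\ref{lem:adjoint-FG-linear}. Then
\[
b'a'=a^\ast b^\ast=(ba)^\ast=P^\ast,\qquad a'b'=b^\ast a^\ast=(ab)^\ast=Q^\ast,
\]
so $P^\ast\sim_{\mathrm{MvN}}Q^\ast$. The only point to watch is the order: because the adjoint reverses products, the roles of $a$ and $b$ must be swapped as above so that the pattern $e=ba$, $f=ab$ of the definition is respected.

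For $(2)\Rightarrow(1)$, I will apply the same argument to the projectors $P^\ast,Q^\ast$. This gives $(P^\ast)^\ast\sim_{\mathrm{MvN}}(Q^\ast)^\ast$, and Lemma~\ref{lem:adjoint-basic}(1) identifies these with $P$ and $Q$. No step is a genuine obstacle. The only things to check carefully are that adjoints remain in $\operatorname{End}_{FG}(KG)$ (Lemma~\ref{lem:adjoint-FG-linear}), which is needed because the equivalence is taken in that ring rather than in $\operatorname{End}_F(KG)$, and that the TH case is handled identically, since $\langle\cdot,\cdot\rangle_{\mathrm{TH}}$ is also a non-degenerate $F$-bilinear form.
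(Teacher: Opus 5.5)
Your proof is correct and follows the paper's argument. In both, you take adjoints of $P=BA$, $Q=AB$ using the product-reversal and involution properties of Lemma~\ref{lem:adjoint-basic}, with Lemma~\ref{lem:adjoint-FG-linear} keeping all factors in $\operatorname{End}_{FG}(KG)$. Your explicit check that $P^{\ast_\star}$ and $Q^{\ast_\star}$ are again projectors, so that statement (2) is well-posed, is a small point the paper leaves implicit.
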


\begin{proof}
Assume first that $P\sim_{\mathrm{MvN}}Q$. Then there exist $A,B\in \operatorname{End}_{FG}(KG)$ such that $P=BA$ and $Q=AB$. Taking adjoints with respect to $\langle\cdot,\cdot\rangle_\star$, Lemma~\ref{lem:adjoint-basic}(2) gives
$
P^{\ast_\star}=A^{\ast_\star}B^{\ast_\star}
$
and
$
Q^{\ast_\star}=B^{\ast_\star}A^{\ast_\star}.
$
Hence $P^{\ast_\star}\sim_{\mathrm{MvN}}Q^{\ast_\star}$.

Conversely, assume that $P^{\ast_\star}\sim_{\mathrm{MvN}}Q^{\ast_\star}$. Then there exist $A,B\in \operatorname{End}_{FG}(KG)$ such that
$
P^{\ast_\star}=BA
$
and
$
Q^{\ast_\star}=AB.
$
Taking adjoints again and using Lemma~\ref{lem:adjoint-basic}(1) and (2), we obtain
$
P=(P^{\ast_\star})^{\ast_\star}=A^{\ast_\star}B^{\ast_\star}
$
and
$
Q=(Q^{\ast_\star})^{\ast_\star}=B^{\ast_\star}A^{\ast_\star}.
$
Hence $P\sim_{\mathrm{MvN}}Q$.
\end{proof}




\begin{proposition}\label{prop:equivalent-projectors}
Let $P,Q\in \operatorname{End}_{FG}(KG)$ be $FG$-linear projectors. Then the following are equivalent:
\begin{enumerate}
    \item $\operatorname{Im}(P)\cong \operatorname{Im}(Q)$ as left $FG$-modules.
    \item $P\sim_{\mathrm{MvN}}Q$ in $\operatorname{End}_{FG}(KG)$.
\end{enumerate}
\end{proposition}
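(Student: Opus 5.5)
The plan is the standard dictionary between idempotents in an endomorphism ring and direct summands, specialized to $A=\operatorname{End}_{FG}(KG)$ and $M=KG$. Write $C=\operatorname{Im}(P)$ and $D=\operatorname{Im}(Q)$. Recall that $KG=C\oplus\ker(P)$ and $KG=D\oplus\ker(Q)$ as left $FG$-modules (Proposition~\ref{proj-summand}), and that $P$ acts as the identity on $C$ and $Q$ as the identity on $D$.

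For $(1)\Rightarrow(2)$, fix an $FG$-isomorphism $\varphi:C\to D$ and define two $FG$-linear maps on $KG$:
$$
A=\iota_D\circ\varphi\circ P, \qquad B=\iota_C\circ\varphi^{-1}\circ Q,
$$
viewing $P$ as a map $KG\to C$, $Q$ as a map $KG\to D$, and $\iota_C,\iota_D$ as the inclusions. Both $A$ and $B$ lie in $\operatorname{End}_{FG}(KG)$. To compute $BA$, take $x\in KG$. Since $A(x)=\varphi(P(x))\in D$ and $Q$ is the identity on $D$, we get $BA(x)=\varphi^{-1}\varphi(P(x))=P(x)$. The symmetric computation gives $AB=Q$. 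Hence $P\sim_{\mathrm{MvN}}Q$.

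For $(2)\Rightarrow(1)$, suppose $P=BA$ and $Q=AB$ with $A,B\in\operatorname{End}_{FG}(KG)$. Replacing $A$ by $A'=QAP$ and $B$ by $B'=PBQ$ keeps the relations, because
$$
B'A'=PBQQAP=PB(AB)AP=P(BA)(BA)P=P^4=P,
$$
and similarly $A'B'=Q$. So one may assume $A=QAP$ and $B=PBQ$. Then $A$ restricts to an $FG$-map $\alpha:C\to D$ and $B$ restricts to $\beta:D\to C$. For $c\in C$ we have $\beta\alpha(c)=BA(c)=P(c)=c$, and for $d\in D$ we have $\alpha\beta(d)=Q(d)=d$. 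Thus $\alpha$ is an $FG$-isomorphism $C\cong D$.

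Everything here is routine. The only point that needs care is the normalization step in $(2)\Rightarrow(1)$. Without it, $A$ need not map $C$ into $D$. (In fact it does anyway, since $A(c)=A(BA)(c)=(AB)A(c)=QA(c)$, and one could use this identity instead of the replacement.)
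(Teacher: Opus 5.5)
Your proof is correct and matches the paper's: the construction $A=\varphi\circ P$, $B=\varphi^{-1}\circ Q$ for $(1)\Rightarrow(2)$ is identical. For $(2)\Rightarrow(1)$, the paper uses exactly your parenthetical identity $Q(Ax)=A(BA)x=Ax$ for $x\in\operatorname{Im}(P)$, so the normalization $A'=QAP$, $B'=PBQ$ is harmless but unnecessary, and your sentence ``Without it, $A$ need not map $C$ into $D$'' is false (as your own parenthetical shows) and should be dropped.
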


\begin{proof}
Assume first that $P\sim_{\mathrm{MvN}}Q$ in $\operatorname{End}_{FG}(KG)$. Then there exist $A,B\in \operatorname{End}_{FG}(KG)$ such that $P=BA$ and $Q=AB$. We claim that the restrictions $A|_{\operatorname{Im}(P)}:\operatorname{Im}(P)\longrightarrow\operatorname{Im}(Q)$ and $B|_{\operatorname{Im}(Q)}:\operatorname{Im}(Q)\longrightarrow\operatorname{Im}(P)$ are mutually inverse $FG$-module isomorphisms.

Let $x\in \operatorname{Im}(P)$. Since $Px=x$, we have
$$
Q(Ax)=AB(Ax)=A(BAx)=A(Px)=Ax,
$$
so $Ax\in \operatorname{Im}(Q)$. Similarly, if $y\in \operatorname{Im}(Q)$, then $Qy=y$, and hence
$$
P(By)=BA(By)=B(ABy)=B(Qy)=By,
$$
so $By\in \operatorname{Im}(P)$.

Moreover, if $x\in \operatorname{Im}(P)$, then $B(Ax)=BAx=Px=x$, and if $y\in \operatorname{Im}(Q)$, then $A(By)=ABy=Qy=y$. Therefore the above restrictions are mutually inverse $FG$-linear isomorphisms, and thus $\operatorname{Im}(P)\cong \operatorname{Im}(Q)$ as left $FG$-modules.

Conversely, suppose that $\varphi:\operatorname{Im}(P)\longrightarrow\operatorname{Im}(Q)$ is an isomorphism of left $FG$-modules, and let $\psi=\varphi^{-1}$. Define maps $A,B:KG\longrightarrow KG$ by $A=\varphi\circ P$ and $B=\psi\circ Q$. Then $A,B\in \operatorname{End}_{FG}(KG)$.

Now let $x\in KG$. Since $Px\in \operatorname{Im}(P)$ and $\varphi(Px)\in \operatorname{Im}(Q)$, we have
$$
BAx=\psi(Q(\varphi(Px)))=\psi(\varphi(Px))=Px.
$$
Thus $BA=P$. Similarly, since $Qx\in \operatorname{Im}(Q)$ and $\psi(Qx)\in \operatorname{Im}(P)$, we obtain
$$
ABx=\varphi(P(\psi(Qx)))=\varphi(\psi(Qx))=Qx.
$$
Thus $AB=Q$, and therefore $P\sim_{\mathrm{MvN}}Q$ in $\operatorname{End}_{FG}(KG)$.
\end{proof}

\begin{corollary}\label{cor:iso-neu}
Let $\star\in\{\mathrm{TE},\mathrm{TH}\}$, where the case $\star=\mathrm{TH}$ is considered only when $m$ is even. Let $C,D\leq KG$ be projector additive left group codes, and write $C=\operatorname{Im}(P)$ and $D=\operatorname{Im}(Q)$, where $P,Q\in \operatorname{End}_{FG}(KG)$ are $FG$-linear projectors. Then the following are equivalent:
\begin{enumerate}
    \item $C\cong D$ as left $FG$-modules.
    \item $P\sim_{\mathrm{MvN}}Q$.
    \item $\ker(P)^{\perp_\star}\cong \ker(Q)^{\perp_\star}$ as left $FG$-modules.
\end{enumerate}
\end{corollary}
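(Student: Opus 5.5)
The equivalence $(1)\Leftrightarrow(2)$ is exactly Proposition~\ref{prop:equivalent-projectors}, applied to the projectors $P$ and $Q$ with $C=\operatorname{Im}(P)$ and $D=\operatorname{Im}(Q)$. So the only new content is to bring condition (3) into the picture. I will do this by identifying $\ker(P)^{\perp_\star}$ and $\ker(Q)^{\perp_\star}$ as images of projectors, and then applying Proposition~\ref{prop:equivalent-projectors} a second time together with Lemma~\ref{lem:adjoint-neu}.

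First, by Lemma~\ref{lem:adjoint-FG-linear}, the adjoint $P^{\ast_\star}$ lies in $\operatorname{End}_{FG}(KG)$. By Lemma~\ref{lem:adjoint-basic}(2) we have
\[
(P^{\ast_\star})^2=(P^2)^{\ast_\star}=P^{\ast_\star},
\]
so $P^{\ast_\star}$ is an $FG$-linear projector. The same holds for $Q^{\ast_\star}$. By Lemma~\ref{lem:adjoint-basic}(4),
\[
\ker(P)^{\perp_\star}=\operatorname{Im}(P^{\ast_\star})
\quad\text{and}\quad
\ker(Q)^{\perp_\star}=\operatorname{Im}(Q^{\ast_\star}).
\]
In particular, both are left $FG$-submodules of $KG$, and indeed projector additive left group codes. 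Lemma~\ref{lem:adjoint-basic} is stated for a general non-degenerate $F$-bilinear form, so it applies to $\langle\cdot,\cdot\rangle_{\mathrm{TE}}$, and to $\langle\cdot,\cdot\rangle_{\mathrm{TH}}$ when $m$ is even, since both are non-degenerate symmetric $F$-bilinear forms.

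Next, applying Proposition~\ref{prop:equivalent-projectors} to the projectors $P^{\ast_\star}$ and $Q^{\ast_\star}$ shows that (3) holds if and only if $P^{\ast_\star}\sim_{\mathrm{MvN}}Q^{\ast_\star}$. By Lemma~\ref{lem:adjoint-neu}, this is equivalent to $P\sim_{\mathrm{MvN}}Q$, which is (2). Combined with $(1)\Leftrightarrow(2)$, this gives the three-way equivalence.

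There is no real obstacle here. The only point requiring care is confirming that $P^{\ast_\star}$ is again an $FG$-linear projector, so that Proposition~\ref{prop:equivalent-projectors} applies to it. Lemma~\ref{lem:adjoint-neu} quietly assumes the same fact, and it follows immediately from the two lemmas above.
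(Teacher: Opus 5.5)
Your proof is correct and follows the paper's argument. As in the paper, $(1)\Leftrightarrow(2)$ comes from Proposition~\ref{prop:equivalent-projectors}, and $(2)\Leftrightarrow(3)$ comes from chaining Lemma~\ref{lem:adjoint-neu}, Proposition~\ref{prop:equivalent-projectors} applied to $P^{\ast_\star},Q^{\ast_\star}$, and Lemma~\ref{lem:adjoint-basic}(4). You also check explicitly, via Lemma~\ref{lem:adjoint-FG-linear} and Lemma~\ref{lem:adjoint-basic}(2), that $P^{\ast_\star}$ is again an $FG$-linear projector, which fills in a step the paper leaves implicit.
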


\begin{proof}
The equivalence $(1)\Leftrightarrow(2)$ is an immediate consequence of Proposition~\ref{prop:equivalent-projectors}.

By Lemma~\ref{lem:adjoint-neu}, Proposition~\ref{prop:equivalent-projectors}, and Lemma~\ref{lem:adjoint-basic}(4), we have
$$
P\sim_{\mathrm{MvN}}Q
\iff
P^{\ast_\star}\sim_{\mathrm{MvN}}Q^{\ast_\star}
\iff
\operatorname{Im}(P^{\ast_\star})\cong \operatorname{Im}(Q^{\ast_\star})
\iff
\ker(P)^{\perp_\star}\cong \ker(Q)^{\perp_\star}.
$$
Hence $(2)\Leftrightarrow(3)$.
\end{proof}
\begin{remark}\label{rem:mvN-coding-significance}
Corollary~\ref{cor:iso-neu} gives a module-theoretic classification principle
for projector additive left group codes: two codes defined as images of
$FG$-linear projectors are isomorphic as left $FG$-modules if and only if
their defining projectors are Murray--von Neumann equivalent. This may be useful for organizing families of projector additive group codes
and for reducing redundancy in classification or computational searches.
However, Murray--von Neumann equivalence is a module-theoretic rather than a
metric notion. In general, it does not preserve Hamming weight distribution
or minimum distance, and therefore complements the usual notions of code
equivalence.
\end{remark}

\begin{corollary}\label{cor:idempotent-neu}
Let $e,f\in KG$ be idempotents, and let $\rho_e,\rho_f:KG\longrightarrow KG$ be defined by $\rho_e(x)=xe$ and $\rho_f(x)=xf$ for all $x\in KG$. Then the following are equivalent:
\begin{enumerate}
    \item $KGe\cong KGf$ as left $KG$-modules.
    \item $\rho_e\sim_{\mathrm{MvN}}\rho_f$ in $\operatorname{End}_{KG}(KG)$.
    \item $e\sim_{\mathrm{MvN}}f$ in $KG$.
    \item $KGe^\ast\cong KGf^\ast$ as left $KG$-modules.
\end{enumerate}
\end{corollary}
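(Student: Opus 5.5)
The plan is to prove the chain (1)$\Leftrightarrow$(2)$\Leftrightarrow$(3) directly, and then to obtain (4) by combining an adjoint argument with the involution $\ast$.

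\emph{(1)$\Leftrightarrow$(2).} This is Proposition~\ref{prop:equivalent-projectors} with $FG$ replaced by $KG$, and the argument there goes through unchanged. If $\rho_e=BA$ and $\rho_f=AB$ with $A,B\in\operatorname{End}_{KG}(KG)$, then the restrictions of $A$ and $B$ are mutually inverse $KG$-isomorphisms between $KGe=\operatorname{Im}(\rho_e)$ and $KGf=\operatorname{Im}(\rho_f)$. Conversely, given a $KG$-isomorphism $\varphi:KGe\to KGf$, set $A=\varphi\circ\rho_e$ and $B=\varphi^{-1}\circ\rho_f$; both are $KG$-linear. I will only remark that the proof of Proposition~\ref{prop:equivalent-projectors} uses nothing specific to $FG$.

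\emph{(2)$\Leftrightarrow$(3).} I will use the anti-isomorphism $KG\to\operatorname{End}_{KG}(KG)$, $a\mapsto\rho_a$. By the computation at the start of Section~3, every $T\in\operatorname{End}_{KG}(KG)$ equals $\rho_{T(1)}$, and $\rho_a\rho_b=\rho_{ba}$.
\begin{itemize}
\item If $e=ba$ and $f=ab$ in $KG$, then $\rho_e=\rho_a\rho_b$ and $\rho_f=\rho_b\rho_a$, so $\rho_e\sim_{\mathrm{MvN}}\rho_f$ (the roles of the two factors are swapped, which is harmless).
\item Conversely, if $\rho_e=BA$ and $\rho_f=AB$, write $A=\rho_a$ and $B=\rho_b$. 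Then $\rho_e=\rho_{ab}$ gives $e=ab$, and $\rho_f=\rho_{ba}$ gives $f=ba$. Hence $e\sim_{\mathrm{MvN}}f$.
\end{itemize}

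\emph{(3)$\Leftrightarrow$(4).} By (1)$\Leftrightarrow$(3) applied to the idempotents $e^\ast,f^\ast$ (Remark~\ref{propiedades-adjunto}), statement (4) is equivalent to $e^\ast\sim_{\mathrm{MvN}}f^\ast$. Since $\ast$ is an involutive anti-automorphism, $e=ba$ and $f=ab$ give $e^\ast=a^\ast b^\ast$ and $f^\ast=b^\ast a^\ast$, so $e^\ast\sim_{\mathrm{MvN}} f^\ast$. Applying $\ast$ again gives the converse.

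An alternative, in the spirit of Corollary~\ref{cor:iso-neu}, would use Lemma~\ref{lem:adjoint-neu} with $\rho_e^{\ast_{\mathrm{TE}}}=\rho_{e^\ast}$. That lemma, however, is stated in $\operatorname{End}_{FG}(KG)$, and the adjoint of a $KG$-linear map is $KG$-linear. I therefore prefer the direct involution argument.

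The only point I expect to need care is the order reversal in $a\mapsto\rho_a$. It is harmless because Murray--von Neumann equivalence is symmetric in the roles of the two factors.
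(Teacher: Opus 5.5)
Your proof is correct, but it reaches (3) and (4) by a different route from the paper's.

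\textbf{How the paper argues.} For (1)$\Leftrightarrow$(3), the paper works directly with the modules. From $e=uv$ and $f=vu$ it builds the mutually inverse maps $x\mapsto xu$ and $y\mapsto yv$ between $KGe$ and $KGf$. Conversely, it recovers $u=\varphi(e)$ and $v=\varphi^{-1}(f)$ from a given isomorphism $\varphi$. For (4), it proves (2)$\Leftrightarrow$(4) through trace-Euclidean adjoints, combining Lemma~\ref{lem:adjoint-neu}, Proposition~\ref{prop:equivalent-projectors} and $\rho_e^{\ast_{\mathrm{TE}}}=\rho_{e^\ast}$.

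\textbf{How you argue.} You transport the Murray--von Neumann relation along two order-reversing bijections. The first is the ring anti-isomorphism $a\mapsto\rho_a$ from $KG$ onto $\operatorname{End}_{KG}(KG)$, which turns (2)$\Leftrightarrow$(3) into a one-line computation. The second is the involution $\ast$, which, together with (1)$\Leftrightarrow$(3) applied to $e^\ast,f^\ast$, gives (3)$\Leftrightarrow$(4).

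\textbf{What each approach gives.} Your argument is purely ring-theoretic and never uses a bilinear form. It also avoids a gap the paper leaves implicit. Lemma~\ref{lem:adjoint-neu} and Proposition~\ref{prop:equivalent-projectors} are stated in $\operatorname{End}_{FG}(KG)$, whereas (2) lives in the smaller ring $\operatorname{End}_{KG}(KG)$. The paper therefore tacitly needs $\ast_{\mathrm{TE}}$-adjoints of $KG$-linear maps to be $KG$-linear. As you say, this holds: the proof of Lemma~\ref{lem:adjoint-FG-linear} works verbatim with $a\in KG$. So the alternative you set aside also goes through. The paper's route, in turn, explains why $e^\ast$ appears at all. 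Since $KGe^\ast=\operatorname{Im}(\rho_e^{\ast_{\mathrm{TE}}})=\ker(\rho_e)^{\perp_{\mathrm{TE}}}$, condition (4) is the $KG$-linear analogue of condition (3) in Corollary~\ref{cor:iso-neu}. That connection is invisible in your purely algebraic transport.
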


\begin{proof}
Since $\operatorname{Im}(\rho_e)=KGe$ and $\operatorname{Im}(\rho_f)=KGf$, the equivalence $(1)\Leftrightarrow(2)$ follows from Proposition~\ref{prop:equivalent-projectors}.

We next prove $(3)\Rightarrow(1)$. Assume that $e\sim_{\mathrm{MvN}}f$ in $KG$. Then there exist $u,v\in KG$ such that $e=uv$ and $f=vu$. Define maps $\varphi:KGe\to KGf$ and $\psi:KGf\to KGe$ by $\varphi(x)=xu$ for all $x\in KGe$ and $\psi(y)=yv$ for all $y\in KGf$. These maps are well defined. Indeed, if $x=ae\in KGe$, then
$
\varphi(x)=aeu=a(eu).
$
Since $e=uv$ and $f=vu$, we have
$
eu=(uv)u=u(vu)=uf,
$
and therefore
$$
\varphi(x)=a(uf)=(au)f\in KGf.
$$
Similarly, if $y=bf\in KGf$, then
$
\psi(y)=bfv=b(fv).
$
Since
$
fv=(vu)v=v(uv)=ve,
$
it follows that
$$
\psi(y)=b(ve)=(bv)e\in KGe.
$$ Moreover, suppose that $ae=a'e$ for some $a,a'\in KG$. Then  $aeu=a'eu$ i.e.
$\varphi(ae)=\varphi(a'e)$.  Similarly, if $bf=b'f$ for some $b,b'\in KG$, then $\psi(bf)=\psi(b'f)$.
Furthermore, both maps are left $KG$-linear. If $x=ae\in KGe$, then
$$
\psi(\varphi(x))=\psi(xu)=(xu)v=x(uv)=xe=(ae)e=ae=x.
$$
Likewise, if $y=bf\in KGf$, then
$$
\varphi(\psi(y))=\varphi(yv)=(yv)u=y(vu)=yf=(bf)f=bf=y.
$$
Thus $\varphi$ and $\psi$ are mutually inverse left $KG$-module isomorphisms, and therefore $KGe\cong KGf$ as left $KG$-modules.

We now prove $(1)\Rightarrow(3)$. Suppose that $\varphi:KGe\to KGf$ is an isomorphism of left $KG$-modules, and let $\psi:KGf\to KGe$ be its inverse. Set $u=\varphi(e)$ and $v=\psi(f)$. Then $u\in KGf$ and $v\in KGe$. Since $\varphi$ and $\psi$ are left $KG$-linear, for every $a,b\in KG$ we have
$
\varphi(ae)=au
$
and
$
\psi(bf)=bv.
$
Now $\varphi(e)=u$ implies
$
u=\varphi(e^2)=e\varphi(e)=eu,
$
and similarly
$
v=\psi(f^2)=f\psi(f)=fv.
$
Since $u\in KGf$, there exists $a\in KG$ such that $u=af$, and thus
$$
e=(\psi\circ\varphi)(e)=\psi(u)=\psi(af)=av.
$$
Using $u=af$, we obtain
$
uv=(af)v=a(fv)=av=e.
$
Similarly, since $v\in KGe$, there exists $b\in KG$ such that $v=be$, and hence
$$
f=(\varphi\circ\psi)(f)=\varphi(v)=\varphi(be)=bu.
$$
Using $v=be$, we obtain
$
vu=(be)u=b(eu)=bu=f.
$
Therefore $e\sim_{\mathrm{MvN}}f$ in $KG$.

Finally, we prove $(2)\Leftrightarrow(4)$ with respect to $\star=\mathrm{TE}$. By Lemma~\ref{lem:adjoint-neu}, we have
$
\rho_e\sim_{\mathrm{MvN}}\rho_f
$
if and only if
$
\rho_e^{\ast_{\mathrm{TE}}}\sim_{\mathrm{MvN}}\rho_f^{\ast_{\mathrm{TE}}}.
$
By Proposition~\ref{prop:equivalent-projectors}, this is equivalent to
$
\operatorname{Im}(\rho_e^{\ast_{\mathrm{TE}}})\cong \operatorname{Im}(\rho_f^{\ast_{\mathrm{TE}}})
$
as left $KG$-modules. Moreover, by Remark~\ref{propiedades-adjunto},
$
\rho_e^{\ast_{\mathrm{TE}}}=\rho_{e^\ast}
$
and
$
\rho_f^{\ast_{\mathrm{TE}}}=\rho_{f^\ast}.
$
Hence
$
\operatorname{Im}(\rho_e^{\ast_{\mathrm{TE}}})=KGe^\ast
$
and
$
\operatorname{Im}(\rho_f^{\ast_{\mathrm{TE}}})=KGf^\ast.
$
Therefore $(2)\Leftrightarrow(4)$.
\end{proof}

\begin{remark}

In the classical idempotent setting, Corollary~\ref{cor:idempotent-neu} recovers the usual Murray--von Neumann equivalence of idempotents in $KG$ and relates it to the left ideals generated by the adjoint idempotents. More precisely, the last equivalence in Corollary~\ref{cor:idempotent-neu} is formulated with respect to the trace-Euclidean adjoint, so that $KGe^\ast\cong KGf^\ast$. If $m$ is even, the same argument applied to the trace-Hermitian adjoint yields the analogous statement with $KG\,\overline e^\ast\cong KG\,\overline f^\ast$.

\end{remark}

\section{Module duals and quotients by orthogonal codes}

In this section, we connect trace duality for additive group codes with the
module-theoretic duality of left $FG$-modules. More precisely, we show that
the quotient of the ambient group algebra by the orthogonal complement,
$KG/C^{\perp_\star}$, is naturally isomorphic to the module dual
$C^*=\operatorname{Hom}_F(C,F)$. This identifies orthogonal quotients with
dual $FG$-modules and provides a representation-theoretic interpretation of
trace duality. We then specialize this result to projector additive group
codes and classical idempotent group codes.

The field $F$ is naturally a left $FG$-module via the augmentation map. More
precisely, if $x=\sum_{g\in G}a_g g\in FG$ and $\lambda\in F$, then
$x\cdot\lambda=\left(\sum_{g\in G}a_g\right)\lambda$. In particular, each
element $g\in G$ acts trivially on $F$, and this action extends $F$-linearly
to all elements of $FG$.

If $M$ and $N$ are left $FG$-modules, then
$\operatorname{Hom}_F(M,N)$ becomes a left $FG$-module via
$$(g\cdot f)(m)=g\bigl(f(g^{-1}m)\bigr)$$
for all $g\in G$, $f\in\operatorname{Hom}_F(M,N)$, and $m\in M$. This action
extends $F$-linearly to all elements of $FG$.

In particular, taking $N=F$, we obtain that, for every left $FG$-module $M$,
the space $M^*=\operatorname{Hom}_F(M,F)$ is naturally a left $FG$-module.
The left $FG$-module $M$ is said to be \emph{self-dual} if
$M\cong M^*$ as left $FG$-modules.

\begin{remark}
Let $C\leq KG$ be an additive left group code. The dual
$C^*=\operatorname{Hom}_F(C,F)$ is the $F$-linear dual of $C$. In particular,
$C^*$ should not be confused with the set $\{c^*:c\in C\}$ arising from the
involution on $KG$.
\end{remark}

\begin{proposition}
Let $C$ be an additive left group code in $KG$, and let $\langle\cdot,\cdot\rangle_\star$ denote either the trace-Euclidean form or, when $m$ is even, the trace-Hermitian form. Then there is a natural isomorphism of left $FG$-modules $KG/C^{\perp_\star}\cong C^*$.
\end{proposition}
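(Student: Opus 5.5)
The plan is to write down the obvious pairing map and check that it is an $FG$-linear surjection whose kernel is $C^{\perp_\star}$. Define
$$
\Phi:KG\longrightarrow C^*=\operatorname{Hom}_F(C,F),\qquad \Phi(y)(c)=\langle y,c\rangle_\star \quad (y\in KG,\ c\in C).
$$
Since $\langle\cdot,\cdot\rangle_\star$ is $F$-bilinear (also in the TH case, because the involution fixes $F$), each $\Phi(y)$ is an $F$-linear functional on $C$, and $\Phi$ is $F$-linear. By the definition of the orthogonal,
$$
\ker(\Phi)=\{y\in KG:\langle y,c\rangle_\star=0\ \text{for all } c\in C\}=C^{\perp_\star}.
$$
The isomorphism $KG/C^{\perp_\star}\cong C^*$ will then come from the first isomorphism theorem, once surjectivity and $FG$-linearity are established. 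Naturality is clear, since $\Phi$ uses no choices.

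For surjectivity I will factor $\Phi$ as the composite
$$
KG\longrightarrow \operatorname{Hom}_F(KG,F)\longrightarrow \operatorname{Hom}_F(C,F).
$$
The first map is $y\mapsto\langle y,\cdot\rangle_\star$. It is an $F$-isomorphism because the form is non-degenerate; this is the same observation the paper uses to construct adjoints. The second map is restriction to $C$, which is surjective because every functional on the $F$-subspace $C$ extends to $KG$ (extend an $F$-basis of $C$ to one of $KG$). Alternatively, one can count dimensions: $\dim_F KG-\dim_F C^{\perp_\star}=\dim_F C=\dim_F C^*$.

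The only step needing care is $FG$-linearity, and it suffices to check it on $g\in G$. For $f\in C^*$ and $c\in C$, the module structure on $C^*$ (with trivial action on $F$) gives
$$
(g\cdot f)(c)=f(g^{-1}c).
$$
Hence I must show $\langle gy,c\rangle_\star=\langle y,g^{-1}c\rangle_\star$. This follows from Lemma~\ref{identidad} with $a=g$:
\begin{enumerate}
\item In the TE case, $\langle gy,c\rangle_{\mathrm{TE}}=\langle y,g^\ast c\rangle_{\mathrm{TE}}$ and $g^\ast=g^{-1}$.
\item In the TH case, $\langle gy,c\rangle_{\mathrm{TH}}=\langle y,\overline{g}^\ast c\rangle_{\mathrm{TH}}$, and $\overline{g}^\ast=g^{-1}$ because the coefficient $1$ is fixed by the involution.
\end{enumerate}
These checks are routine; the main thing is to match the action convention and the slot of the form consistently. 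Thus $\Phi(gy)=g\cdot\Phi(y)$, so $\Phi$ is $FG$-linear, and therefore $\Phi$ induces the desired isomorphism $KG/C^{\perp_\star}\cong C^*$ of left $FG$-modules.
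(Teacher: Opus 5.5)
Your proposal is correct and follows the paper's proof essentially step for step. Both use the same pairing map $\Phi$, the same identification $\ker\Phi=C^{\perp_\star}$, surjectivity by extending a functional on $C$ to $KG$ and representing it via non-degeneracy, and $FG$-linearity from Lemma~\ref{identidad}; your explicit check that $\overline{g}^\ast=g^{-1}$ in the trace-Hermitian case is a detail the paper leaves implicit.
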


\begin{proof}
Define $\Phi:KG\longrightarrow C^*$ by $\Phi(x)(c)=\langle x,c\rangle_\star$ for all $x\in KG$ and $c\in C$.

Since $\langle\cdot,\cdot\rangle_\star$ is $F$-bilinear, $\Phi$ is $F$-linear. We claim that $\Phi$ is $FG$-linear. It is enough to check this on elements of $G$. Let $g\in G$, $x\in KG$, and $c\in C$. Then
$$
\Phi(gx)(c)=\langle gx,c\rangle_\star.
$$
By Lemma~\ref{identidad}, $\langle gx,c\rangle_\star=\langle x,g^{-1}c\rangle_\star$. Therefore,
$$
\Phi(gx)(c)=\langle x,g^{-1}c\rangle_\star=\Phi(x)(g^{-1}c)=(g\cdot\Phi(x))(c),
$$
since $g$ acts trivially on $F$. Hence $\Phi(gx)=g\cdot\Phi(x)$, and thus $\Phi$ is a homomorphism of left $FG$-modules.

Now $x\in\ker(\Phi)$ if and only if $\Phi(x)(c)=0$ for all $c\in C$, if and only if $\langle x,c\rangle_\star=0$ for all $c\in C$. By definition, this means exactly that $x\in C^{\perp_\star}$. Thus $\ker(\Phi)=C^{\perp_\star}$.

It remains to prove that $\Phi$ is surjective. Let $f\in C^*=\operatorname{Hom}_F(C,F)$. Since $C$ is an $F$-subspace of $KG$, the functional $f$ extends to some $\widetilde{f}\in \operatorname{Hom}_F(KG,F)$.

Now consider the $F$-linear map
$
T:KG\longrightarrow \operatorname{Hom}_F(KG,F)$ defined by $T(x)(y)=\langle x,y\rangle_\star.
$
Because $\langle\cdot,\cdot\rangle_\star$ is nondegenerate, $T$ is injective. Since $KG$ and $\operatorname{Hom}_F(KG,F)$ have the same dimension over $F$, it follows that $T$ is an isomorphism. Therefore there exists $x\in KG$ such that
$
\widetilde{f}(y)=\langle x,y\rangle_\star
$
for all $y\in KG$. Restricting to $C$, we obtain
$$
f(c)=\widetilde{f}(c)=\langle x,c\rangle_\star=\Phi(x)(c)
$$
for all $c\in C$. Hence $f=\Phi(x)$, and thus $\Phi$ is surjective.

By the First Isomorphism Theorem, $KG/C^{\perp_\star}\cong C^*$ as left $FG$-modules.
\end{proof}

\begin{remark}\label{rem:quotient-module-dual-significance}
The isomorphism $KG/C^{\perp_\star}\cong C^*$ identifies the quotient by the
orthogonal code with the $F$-linear dual of $C$ as a left $FG$-module. Thus,
it provides a direct connection between trace duality and the
module-theoretic structure of additive group codes.
\end{remark}

We now reformulate these quotients in terms of $FG$-linear projectors.

\begin{corollary}
Let $C$ be an additive left group code in $KG$, and let $P:KG\longrightarrow KG$ be an $FG$-linear projector such that $C=\operatorname{Im}(P)$. Then the following hold:
\begin{enumerate}
    \item If $P^{\ast_{\mathrm{TE}}}$ denotes the adjoint of $P$ with respect to $\langle\cdot,\cdot\rangle_{\mathrm{TE}}$, then $C^*\cong \operatorname{Im}(P^{\ast_{\mathrm{TE}}})$ as left $FG$-modules.

    \item If $m$ is even and $P^{\ast_{\mathrm{TH}}}$ denotes the adjoint of $P$ with respect to $\langle\cdot,\cdot\rangle_{\mathrm{TH}}$, then $C^*\cong \operatorname{Im}(P^{\ast_{\mathrm{TH}}})$ as left $FG$-modules.
\end{enumerate}
\end{corollary}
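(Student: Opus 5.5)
The plan is to combine the preceding proposition, $KG/C^{\perp_\star}\cong C^*$, with the identification of $C^{\perp_\star}$ as the kernel of the adjoint projector. Fix $\star\in\{\mathrm{TE},\mathrm{TH}\}$, with $\star=\mathrm{TH}$ only when $m$ is even, and write $P^\ast$ for $P^{\ast_\star}$.

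First, by Lemma~\ref{lem:adjoint-FG-linear}, $P^\ast\in\operatorname{End}_{FG}(KG)$. By Lemma~\ref{lem:adjoint-basic}(2), $(P^\ast)^2=(P^2)^\ast=P^\ast$, so $P^\ast$ is an $FG$-linear projector. Hence
\[
KG=\operatorname{Im}(P^\ast)\oplus\ker(P^\ast)
\]
as a direct sum of left $FG$-modules.

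Next, Lemma~\ref{lem:image-projector-dual-kernel} (equivalently Lemma~\ref{lem:adjoint-basic}(3)) gives $C^{\perp_\star}=\operatorname{Im}(P)^{\perp_\star}=\ker(P^\ast)$. Therefore
\[
KG/C^{\perp_\star}=KG/\ker(P^\ast)\cong\operatorname{Im}(P^\ast)
\]
as left $FG$-modules. This isomorphism can be realized in either of two ways. One is to restrict the quotient map to the complement $\operatorname{Im}(P^\ast)$ of $\ker(P^\ast)$. The other is to apply the First Isomorphism Theorem to the $FG$-linear map $P^\ast\colon KG\to\operatorname{Im}(P^\ast)$, which is surjective with kernel $\ker(P^\ast)$.

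Finally, combining this with the preceding proposition gives
\[
C^*\cong KG/C^{\perp_\star}\cong\operatorname{Im}(P^\ast).
\]
Parts (1) and (2) are the cases $\star=\mathrm{TE}$ and $\star=\mathrm{TH}$. There is no real obstacle here. The only point to check is that all maps involved are $FG$-linear rather than merely $F$-linear, and Lemma~\ref{lem:adjoint-FG-linear} supplies exactly that for $P^\ast$.
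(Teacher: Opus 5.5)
Your proposal is correct and follows the paper's proof: identify $C^{\perp_\star}=\ker(P^{\ast_\star})$ via Lemma~\ref{lem:adjoint-basic}(3), apply the First Isomorphism Theorem to get $KG/\ker(P^{\ast_\star})\cong\operatorname{Im}(P^{\ast_\star})$, and combine this with the preceding proposition $KG/C^{\perp_\star}\cong C^*$. Your explicit appeal to Lemma~\ref{lem:adjoint-FG-linear} for the $FG$-linearity of $P^{\ast_\star}$ makes precise a point the paper leaves implicit, while the observation that $P^{\ast_\star}$ is itself a projector is harmless but not needed.
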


\begin{proof}
For the trace-Euclidean form, Lemma~\ref{lem:adjoint-basic}(3) gives $C^{\perp_{\mathrm{TE}}}=(\operatorname{Im}P)^{\perp_{\mathrm{TE}}}=\ker(P^{\ast_{\mathrm{TE}}})$. Hence, by the First Isomorphism Theorem,
$$
KG/C^{\perp_{\mathrm{TE}}}
=
KG/\ker(P^{\ast_{\mathrm{TE}}})
\cong
\operatorname{Im}(P^{\ast_{\mathrm{TE}}}).
$$
On the other hand, by the previous proposition, $KG/C^{\perp_{\mathrm{TE}}}\cong C^*$. Therefore $C^*\cong \operatorname{Im}(P^{\ast_{\mathrm{TE}}})$ as left $FG$-modules.

If $m$ is even, Lemma~\ref{lem:adjoint-basic}(3) gives $C^{\perp_{\mathrm{TH}}}=(\operatorname{Im}P)^{\perp_{\mathrm{TH}}}=\ker(P^{\ast_{\mathrm{TH}}})$. Thus
$$
KG/C^{\perp_{\mathrm{TH}}}
=
KG/\ker(P^{\ast_{\mathrm{TH}}})
\cong
\operatorname{Im}(P^{\ast_{\mathrm{TH}}}).
$$
Since $KG/C^{\perp_{\mathrm{TH}}}\cong C^*$, it follows that $C^*\cong \operatorname{Im}(P^{\ast_{\mathrm{TH}}})$ as left $FG$-modules.
\end{proof}
\begin{corollary}\label{cor:lcd-module-selfdual}
Let $C\leq KG$ be an additive left group code. If $C$ is
$\mathrm{TE}$-LCD, then $C\cong C^*$ as left $FG$-modules. If $m$ is even
and $C$ is $\mathrm{TH}$-LCD, then $C\cong C^*$ as left $FG$-modules.
\end{corollary}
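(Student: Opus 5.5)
The plan is to derive the corollary directly from the proposition establishing the natural isomorphism $KG/C^{\perp_\star}\cong C^*$, together with the decomposition of $KG$ afforded by the LCD property. I will treat the trace-Euclidean case and note that the trace-Hermitian case, for $m$ even, is word for word the same.

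\textbf{Step 1: the decomposition.} Assume $C$ is TE-LCD, so $C\cap C^{\perp_{\mathrm{TE}}}=\{0\}$. Since the trace-Euclidean form is non-degenerate, $\dim_F(C)+\dim_F(C^{\perp_{\mathrm{TE}}})=mn=\dim_F(KG)$. Therefore
$$
KG=C\oplus C^{\perp_{\mathrm{TE}}},
$$
as recorded in the remark following the definition of LCD codes. Both summands are left $FG$-submodules: $C$ by hypothesis, and $C^{\perp_{\mathrm{TE}}}$ by the earlier proposition on duals of additive left group codes. So this is a direct sum of left $FG$-modules.

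\textbf{Step 2: $C$ as a quotient.} Consider the composite $C\hookrightarrow KG\to KG/C^{\perp_{\mathrm{TE}}}$. It is $FG$-linear, its kernel is $C\cap C^{\perp_{\mathrm{TE}}}=\{0\}$, and it is surjective because every $x\in KG$ decomposes as $x=c+c'$ with $c\in C$ and $c'\in C^{\perp_{\mathrm{TE}}}$. Hence
$$
C\cong KG/C^{\perp_{\mathrm{TE}}}
$$
as left $FG$-modules.

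\textbf{Step 3: conclusion.} Composing the isomorphism of Step 2 with the proposition's isomorphism $KG/C^{\perp_{\mathrm{TE}}}\cong C^*$ gives $C\cong C^*$ as left $FG$-modules.

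\textbf{Alternative route.} One could also use Theorem~\ref{thm:LCD-projector-characterization} to obtain a self-adjoint projector $P$ with $\operatorname{Im}(P)=C$. The preceding corollary then gives $C^*\cong\operatorname{Im}(P^{\ast})=\operatorname{Im}(P)=C$.

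\textbf{Where care is needed.} No step is a genuine obstacle. The only point requiring attention is that every map involved is $FG$-linear and not merely $F$-linear. This is ensured because $C^{\perp_\star}$ is an $FG$-submodule and because the proposition already supplies the module structure on $C^*$.
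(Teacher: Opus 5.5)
Your main argument is correct, but it is packaged differently from the paper's proof. The paper's proof is exactly your ``alternative route'': it invokes Theorem~\ref{thm:LCD-projector-characterization} to get a $\mathrm{TE}$-self-adjoint $FG$-linear projector $P$ with $\operatorname{Im}(P)=C$. It then applies the preceding corollary to conclude $C^*\cong\operatorname{Im}(P^{\ast_{\mathrm{TE}}})=\operatorname{Im}(P)=C$. Unwinding that corollary, the paper's isomorphism is $KG/C^{\perp_{\mathrm{TE}}}=KG/\ker(P)\cong\operatorname{Im}(P)=C$, induced by the orthogonal projection. Its inverse is your composite $C\hookrightarrow KG\to KG/C^{\perp_{\mathrm{TE}}}$, so both proofs produce the same isomorphism $c\mapsto\langle c,\cdot\rangle_{\mathrm{TE}}|_C$ between $C$ and $C^*$.

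What your version buys is economy. It uses only the quotient proposition $KG/C^{\perp_\star}\cong C^*$ and the complement $KG=C\oplus C^{\perp_\star}$. It bypasses the existence and $FG$-linearity of adjoints (Lemmas~\ref{lem:adjoint-FG-linear} and~\ref{lem:adjoint-basic}) and the LCD--projector theorem altogether. It also says something without the LCD hypothesis: the kernel of your composite is always $C\cap C^{\perp_\star}$, so $C/(C\cap C^{\perp_\star})$ embeds in $C^*$ for every additive left group code $C$.

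The paper's route, by contrast, presents the corollary as an instance of the projector--adjoint framework, which is the theme of the section: self-adjointness of $P$ turns $C^*\cong\operatorname{Im}(P^*)$ into $C^*\cong C$.
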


\begin{proof}
Assume first that $C$ is $\mathrm{TE}$-LCD. By
Theorem~\ref{thm:LCD-projector-characterization}, there exists a
trace-Euclidean self-adjoint $FG$-linear projector $P$ such that
$C=\operatorname{Im}(P)$. By the preceding corollary,
$C^*\cong\operatorname{Im}(P^{\ast_{\mathrm{TE}}})$. Since $P$ is
self-adjoint, $P^{\ast_{\mathrm{TE}}}=P$, and hence
$C^*\cong\operatorname{Im}(P)=C$.

The trace-Hermitian case is analogous.
\end{proof}

We now specialize the previous results to the idempotent setting. If $C$ is a $K$-linear code in $KG$, then its Euclidean and trace-Euclidean duals coincide, that is, $C^{\perp_{\mathrm E}}=C^{\perp_{\mathrm{TE}}}$. Likewise, when $m$ is even, its Hermitian and trace-Hermitian duals coincide, namely $C^{\perp_{\mathrm H}}=C^{\perp_{\mathrm{TH}}}$. Thus, for classical idempotent group codes, the preceding results may also be interpreted in terms of the usual Euclidean and Hermitian duals. Since $C=KGe$ is $K$-linear, one may also consider its $K$-dual $\operatorname{Hom}_K(C,K)$, which naturally leads to a $KG$-module formulation. However, in what follows we continue to work with the $F$-dual $C^*=\operatorname{Hom}_F(C,F)$, since this is the dual naturally associated with the trace forms considered throughout the paper.

\begin{corollary}\label{pre-coro}
Let $C=KGe$ be the left ideal of $KG$ generated by an idempotent $e\in KG$, and let $\rho:KG\longrightarrow KG$ be the $KG$-linear projector defined by $\rho(x)=xe$. Then the following hold:
\begin{enumerate}
    \item If $\rho^{\ast_{\mathrm{TE}}}$ denotes the adjoint of $\rho$ with respect to $\langle\cdot,\cdot\rangle_{\mathrm{TE}}$, then $C^*=(KG e)^\ast \cong KGe^\ast$ as left $FG$-modules.

    \item If $m$ is even and $\rho^{\ast_{\mathrm{TH}}}$ denotes the adjoint of $\rho$ with respect to $\langle\cdot,\cdot\rangle_{\mathrm{TH}}$, then $C^*=(KG e)^\ast\cong KG\,\overline e^\ast$ as left $FG$-modules.
\end{enumerate}
\end{corollary}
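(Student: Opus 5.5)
This follows directly from the preceding corollary applied to the projector $\rho=\rho_e$, combined with the explicit adjoint computed in Remark~\ref{propiedades-adjunto}. First, $\rho$ is $KG$-linear, hence in particular an $FG$-linear projector, with $\operatorname{Im}(\rho)=KGe=C$ by Proposition~\ref{idempotent-projector}. So the hypotheses of the preceding corollary are satisfied.

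Applying part~(1) of that corollary gives $C^*\cong\operatorname{Im}(\rho^{\ast_{\mathrm{TE}}})$ as left $FG$-modules. Remark~\ref{propiedades-adjunto} identifies $\rho^{\ast_{\mathrm{TE}}}=\rho_{e^\ast}$. To keep the argument self-contained, I would verify this with Lemma~\ref{identidad}. For all $x,y\in KG$,
\[
\langle xe,y\rangle_{\mathrm{TE}}=\langle x,ye^\ast\rangle_{\mathrm{TE}},
\]
which is the right-multiplication identity with $a=e$. Uniqueness of the adjoint then gives $\rho^{\ast_{\mathrm{TE}}}(y)=ye^\ast$. Since $e^\ast$ is an idempotent, Proposition~\ref{idempotent-projector} gives $\operatorname{Im}(\rho_{e^\ast})=KGe^\ast$. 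Hence $C^*\cong KGe^\ast$.

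For part~(2), with $m$ even, I would argue the same way. The trace-Hermitian identity $\langle xe,y\rangle_{\mathrm{TH}}=\langle x,y\,\overline e^\ast\rangle_{\mathrm{TH}}$ from Lemma~\ref{identidad}(2) gives $\rho^{\ast_{\mathrm{TH}}}=\rho_{\overline e^\ast}$. Next, $\overline e^\ast$ is an idempotent: the map $a\mapsto\overline a$ is a ring automorphism of $KG$, since it is induced by the field automorphism $x\mapsto x^{q^{m/2}}$ on coefficients, and $\ast$ is an anti-automorphism. Therefore $(\overline e^\ast)^2=\overline{(e^2)}^\ast=\overline e^\ast$. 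Part~(2) of the preceding corollary then yields $C^*\cong\operatorname{Im}(\rho_{\overline e^\ast})=KG\,\overline e^\ast$.

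There is no real obstacle here. The only points needing care are that the adjoint uses right multiplication by $e^\ast$, not left, and the check that $\overline e^\ast$ is idempotent. The isomorphisms are only claimed as left $FG$-modules, so nothing further about $K$-structure is needed.
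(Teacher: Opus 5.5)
Your proposal is correct and is essentially the paper's proof. You apply the preceding corollary to $P=\rho_e$ and use Remark~\ref{propiedades-adjunto} to identify $\rho_e^{\ast_{\mathrm{TE}}}=\rho_{e^\ast}$ (resp.\ $\rho_e^{\ast_{\mathrm{TH}}}=\rho_{\overline e^\ast}$), whose image is $KGe^\ast$ (resp.\ $KG\,\overline e^\ast$).

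Your derivation of the adjoint from the right-multiplication identities of Lemma~\ref{identidad} simply fills in the computation the paper delegates to that remark. The idempotency check for $\overline e^\ast$ is harmless but not needed: $\operatorname{Im}(\rho_a)=KGa$ for any $a\in KG$, and the preceding corollary only applies the first isomorphism theorem to the $FG$-linear map $P^{\ast}$.
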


\begin{proof}
For the trace-Euclidean form, let $\rho^{\ast_{\mathrm{TE}}}$ denote the adjoint of $\rho$ with respect to $\langle\cdot,\cdot\rangle_{\mathrm{TE}}$. Fix $y\in KG$. By the previous Corollary and Remark~\ref{propiedades-adjunto}, we have that
$C^* \cong \operatorname{Im}(\rho_e^{\ast_{\mathrm{TE}}}) = \operatorname{Im}(\rho_{e^{\ast}}) = KGe^{\ast}.$
If $m$ is even, the argument is similar.
\end{proof}

\begin{corollary}
Let $e,f\in KG$ be idempotents, and let $\rho_e,\rho_f:KG\longrightarrow KG$ be defined by $\rho_e(x)=xe$ and $\rho_f(x)=xf$ for all $x\in KG$. Then the following are equivalent:
\begin{enumerate}
    \item $KGe\cong KGf$ as left $KG$-modules.
    \item $\rho_e\sim_{\mathrm{MvN}}\rho_f$ in $\operatorname{End}_{KG}(KG)$.
    \item $e\sim_{\mathrm{MvN}}f$ in $KG$.
    \item $KGe^\ast\cong KGf^\ast$ as left $KG$-modules.
    \item $(KGe)^*\cong(KGf)^*$ as left $KG$-modules.
\end{enumerate}
\end{corollary}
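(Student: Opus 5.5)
The equivalences (1)$\Leftrightarrow$(2)$\Leftrightarrow$(3)$\Leftrightarrow$(4) are exactly Corollary~\ref{cor:idempotent-neu}, so only condition (5) needs to be linked to the others. I plan to show (4)$\Leftrightarrow$(5) by identifying each dual $(KGe)^*$ with $KGe^\ast$ as left $KG$-modules.

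First I would check that Corollary~\ref{pre-coro} holds over $KG$ and not only over $FG$. Its proof comes from the earlier proposition: the map $\Phi:KG\to C^*$, $\Phi(x)(c)=\langle x,c\rangle_{\mathrm{TE}}$, induces $KG/C^{\perp_{\mathrm{TE}}}\cong C^*$. When $C=KGe$ is $K$-linear, $C^*=\operatorname{Hom}_F(C,F)$ carries a natural $K$-action $(\lambda f)(c)=f(\lambda c)$. This makes $C^*$ a left $KG$-module through $(\lambda g\cdot f)(c)=f(\lambda g^{-1}c)$, which I take to be the $KG$-structure intended in (5). I would then verify that $\Phi$ is $K$-linear. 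Because the trace-Euclidean form is computed coefficientwise, $\langle \lambda x,c\rangle_{\mathrm{TE}}=\operatorname{Tr}(\lambda\langle x,c\rangle_{\mathrm{E}})=\langle x,\lambda c\rangle_{\mathrm{TE}}$, and this gives $\Phi(\lambda x)=\lambda\Phi(x)$. Combined with the $G$-equivariance already proved, $\Phi$ is a $KG$-homomorphism.

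Next, the kernel is $C^{\perp_{\mathrm{TE}}}=\ker(\rho_{e^\ast})=KG(1-e^\ast)$ by Remark~\ref{propiedades-adjunto}. Since $\rho_{e^\ast}$ is $KG$-linear, the first isomorphism theorem gives $(KGe)^*\cong KG/KG(1-e^\ast)\cong KGe^\ast$ as left $KG$-modules, and in the same way $(KGf)^*\cong KGf^\ast$. Once these two isomorphisms are in place, (4)$\Leftrightarrow$(5) is immediate.

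The main obstacle is interpretive. The statement requires $(KGe)^*$ to be a left $KG$-module, but the paper has only defined it as an $FG$-module. I would settle this by fixing the $K$-action described above and then confirming that $\Phi$ respects it. That verification rests on the single trace identity above; everything else is bookkeeping.
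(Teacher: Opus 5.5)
Your proposal is correct and follows the paper's outline. The paper's entire proof is the citation of Corollary~\ref{cor:idempotent-neu} for (1)--(4) together with Corollary~\ref{pre-coro} for (5). What you add is exactly what that citation leaves out.

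Corollary~\ref{pre-coro} only gives $(KGe)^*\cong KGe^\ast$ as left $FG$-modules, and the paper never defines a $KG$-structure on $C^*=\operatorname{Hom}_F(C,F)$. Read literally, it therefore cannot carry the $KG$-isomorphism in (4) over to (5), or back. The gap is genuine. Take $F=\mathbb{F}_2$, $K=\mathbb{F}_4$, $G=C_3$, with $e=1+\omega^2g+\omega g^2$ and $f=e^\ast=1+\omega g+\omega^2g^2$. Then $g$ acts on $KGe$ by $\omega$ and on $KGf$ by $\omega^2$, so (1) and (4) fail. Yet both are the $2$-dimensional irreducible $\mathbb{F}_2C_3$-module, so $(KGe)^*\cong(KGf)^*$ over $FG$.

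Your argument closes this gap in three steps:
\begin{enumerate}
\item Your action $(a\cdot\phi)(c)=\phi(a^\ast c)$ extends the paper's $FG$-action. It also matches the contragredient action on $\operatorname{Hom}_K(C,K)$ under $\psi\mapsto\operatorname{Tr}_{K/F}\circ\psi$.
\item Your check $\Phi(\lambda x)=\lambda\Phi(x)$, via the $K$-bilinearity of $\langle\cdot,\cdot\rangle_{\mathrm{E}}$, makes $\Phi$ a $KG$-homomorphism.
\item Combined with $\ker\Phi=KG(1-e^\ast)$ and the $KG$-linearity of $\rho_{e^\ast}$, this upgrades Corollary~\ref{pre-coro} to an isomorphism of left $KG$-modules, which is exactly what (4)$\Leftrightarrow$(5) needs.
\end{enumerate}

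Two routine points are left implicit. Your formula is a left $KG$-action because $\ast$ is an anti-automorphism and $K$ is central. The surjectivity of $\Phi$ is already supplied by the paper's proposition.
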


\begin{proof}
  This follows directly from Corollaries~\ref{cor:idempotent-neu} and~\ref{pre-coro}.
\end{proof}

\end{document}